\newtheorem{theorem}{Theorem}[section]
\newtheorem{lemma}[theorem]{Lemma}
\newtheorem{proposition}[theorem]{Proposition}
\newtheorem{hypothesis}[theorem]{Hypothesis}
\theoremstyle{remark}
\newtheorem{remark}[theorem]{Remark}
\theoremstyle{definition}
\newtheorem{definition}[theorem]{Definition}
\theoremstyle{remark}
\newtheorem{case}{Case}
\numberwithin{equation}{section}
\newcommand{\Z}{\mathbb{Z}}
\newcommand{\R}{\mathbb{R}}
\newcommand{\C}{\mathbb{C}}
\newcommand{\E}{\mathbb{E}}
\newcommand{\CA}{\mathcal{A}}
\newcommand{\CB}{\mathcal{B}}
\newcommand{\CC}{\mathcal{C}}
\newcommand{\CH}{\mathcal{H}}
\newcommand{\CL}{\mathcal{L}}
\newcommand{\CQ}{\mathcal{Q}}
\newcommand{\FS}{\mathfrak{S}}
\newcommand{\Bohr}{\text{Bohr}}
\newcommand{\1}{\mathbf{1}}
\newcommand{\Prime}{\mathbb{P}}
\begin{document}

\title{Vinogradov's three primes theorem with almost twin primes}

\author{Kaisa Matom\"aki}
\address{Department of Mathematics and Statistics\\ University of Turku \\
20014 Turku \\ Finland}
\email{ksmato@utu.fi}
\thanks{KM was supported by Academy of Finland grant no. 137883, 138522 and 285894.}

\author{Xuancheng Shao}
\address{Mathematical Institute\\ Radcliffe Observatory Quarter\\ Woodstock Road\\ Oxford OX2 6GG \\ United Kingdom}
\email{Xuancheng.Shao@maths.ox.ac.uk}
\thanks{XS is supported by a Glasstone Research Fellowship.}

\begin{abstract}
In this paper we prove two results concerning Vinogradov's three primes theorem with primes that can be called almost twin primes. First, for any $m$, every sufficiently large odd integer $N$ can be written as a sum of three primes $p_1, p_2$ and $p_3$ such that, for each $i \in \{1,2,3\}$, the interval $[p_i, p_i + H]$ contains at least $m$ primes, for some $H = H(m)$. Second, every sufficiently large integer $N \equiv 3 \pmod{6}$ can be written as a sum of three primes $p_1, p_2$ and $p_3$ such that, for each $i \in \{1,2,3\}$, $p_i + 2$ has at most two prime factors.
\end{abstract}

\maketitle

\section{Introduction}

The Hardy-Littlewood prime tuples conjecture says that, for any admissible set of $k$ integers $\CH = \{h_1, \cdots, h_k\}$, there are infinitely many values of $n$ such that $n + h_1, \cdots, n + h_k$ are all prime. Here $\CH$ is said to be admissible if it misses at least one residue class modulo $p$ for every prime $p$. In particular, the twin prime conjecture is the special case when $\CH = \{0,2\}$.

Using an elaboration of the linear sieve method, Chen \cite{Chen} proved that there are infinitely many primes $p$ such that $p+2$ is the product of at most two primes (this property is traditionally denoted by $p + 2 = P_2$). If one insists on prime values, it is only recently that Zhang \cite{Zhang}, and subsequently Maynard \cite{MaynardI}, made the breakthrough showing that there are infinitely many values of $n$ for which at least two of $n + h_1, \cdots, n + h_k$ are prime, provided that $k$ is large enough but fixed. Indeed, Maynard's argument shows that one can find $m$ primes among $n + h_1, \cdots, n + h_k$ for any $m$, provided that $k$ is large enough in terms of $m$. This result was proved independently by Tao in an unpublished work. We refer the reader to the excellent survey article \cite{Granville-survey} for the main ideas behind these works.

Since the introduction of the Hardy-Littlewood circle method, there have been a flurry of results about solving linear equations in prime variables, by analyzing exponential sums over primes. In 1937, Vinogradov showed that all sufficiently large odd positive integers can be written as a sum of three primes. This establishes the ternary version of the Goldbach conjecture. In this paper, we prove the analogous statement for the special types of almost twin primes mentioned above.

\begin{theorem}
\label{th:GoldbachMaynard}
For any positive integer $m$, there exist positive constants $H = H(m)$ and $N_0 = N_0(m)$ such that every odd integer $N \geq N_0$ can be written in the form $N = p_1 + p_2 + p_3$, where, for $i = 1, 2, 3$, $p_i$ are primes such that the interval $[p_i, p_i + H]$ contains at least $m$ primes.
\end{theorem}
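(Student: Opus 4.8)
The plan is to run the Hardy--Littlewood circle method with the sieve weights behind Maynard's theorem in place of the von Mangoldt function. Fix $m$, take $k$ large in terms of $m$, and fix an admissible $k$-tuple $\CH=\{h_1<\cdots<h_k\}\subseteq[0,H]$; this is the $H=H(m)$ of the statement. After the standard $W$-trick ($W=\prod_{p\le D_0}p$ and $v_0\pmod W$ with $(v_0+h_i,W)=1$ for all $i$), let $w_n\ge0$ be Maynard's weight, supported on $n\equiv v_0\pmod W$. The two facts I will use are: (a) the forms $n+h_i$ have level of distribution $1/2$ (Bombieri--Vinogradov), which gives asymptotics for $\sum_{n\sim X}w_n$, for $\sum_{n\sim X}w_n\1_{\Prime}(n+h_i)$, and for the additively twisted versions $\sum_{n\sim X}w_n\,e(n\beta)$ and $\sum_{n\sim X}w_n\1_{\Prime}(n+h_i)\,e(n\beta)$ with $\beta$ near a rational of small denominator; and (b) Maynard's inequality $\sum_{n\sim X}w_n\sum_{i=1}^k\1_{\Prime}(n+h_i)\ge C_k\sum_{n\sim X}w_n$, with $C_k\to\infty$ as $k\to\infty$. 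The point of (b) is that if $\#\{i:n+h_i\in\Prime\}\ge m$ (call such $n$ \emph{good}) and $p$ is the least prime among the $n+h_i$, then $p$ is a prime with at least $m$ primes in $[p,p+H]$ --- exactly the type of summand we need.

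To avoid the inaccessible two-form sums $\sum_n w_n\1_{\Prime}(n+h_i)\1_{\Prime}(n+h_j)$ one works, as Maynard does, with the inequality only. Introduce the signed weight $\psi(n)=w_n\bigl(\sum_{i=1}^k\1_{\Prime}(n+h_i)-(m-1)\bigr)$, which is positive exactly on the good $n$, satisfies $0\le\psi^-\le(m-1)w$, and has $\sum_{n\sim X}\psi(n)\ge(C_k-m+1)\sum_{n\sim X}w_n$. For $M$ in the appropriate class modulo $W$, write $\psi=\psi^+-\psi^-$ and expand:
\[
\sum_{n_1+n_2+n_3=M}\ \prod_{l=1}^3\psi^+(n_l)\;=\;\sum_{n_1+n_2+n_3=M}\ \prod_{l=1}^3\psi(n_l)\;+\;\sum_{\emptyset\ne T\subseteq\{1,2,3\}}\ \sum_{n_1+n_2+n_3=M}\ \prod_{l\in T}\psi^-(n_l)\prod_{l\notin T}\psi(n_l),
\]
whose left side is supported on triples of good $n_l$. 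The first term on the right I would evaluate by the circle method; its major-arc main term is positive of size $\asymp\mathfrak S_\psi(M)\,(\sum_n\psi(n))^3$, hence $\gg C_k^3\,(\sum_n w_n)^3$ up to the normalisation, the singular series being bounded below since $M$ is in an admissible class mod $W$, and the minor arcs are negligible. Each correction term with $|T|=t$ is, by $\psi^-\le(m-1)w$ and $|\psi|\le(k+m)w$, at most $\ll_m C_k^{\,3-t}(\sum_n w_n)^3$, so once $k$ is large the main term dominates all corrections and forces a good triple $n_1+n_2+n_3=M$. Running this over the $M$ of the shape $N-h_{i_1}-h_{i_2}-h_{i_3}$ that lie in the right class mod $W$, and reading off the least prime $p_l$ in each good tuple $n_l+\CH$ so that $p_1+p_2+p_3=N$, proves the theorem; the one delicate point in this last step is choosing the shifts so that the $p_l$ sum to exactly $N$, which needs a little extra bookkeeping with the $h_{i_l}$, the parity of $N$, and the freedom in $v_0$.

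Finally one supplies the two circle-method inputs. On the major arcs the needed asymptotics for $\sum_{n\sim X}w_n\,e(na/q+n\beta)$ and $\sum_{n\sim X}w_n\1_{\Prime}(n+h_i)\,e(na/q+n\beta)$, $q$ small, follow by opening $w_n$ into its bilinear divisor sum, splitting $n$ into residues modulo $qW$, using the level-$1/2$ distribution of the forms as in Maynard's one-variable argument, and carrying $e(n\beta)$ along by partial summation. On the minor arcs one needs $|\sum_n w_n e(n\alpha)|$ and $|\sum_n w_n\1_{\Prime}(n+h_i)e(n\alpha)|$ smaller than the trivial bound by a power of $\log N$: the first is a routine Type I bilinear estimate, while the second, after opening $w_n$, is a sum of $\1_{\Prime}$ (or $\Lambda$) twisted against a divisor-bounded weight, to be handled by a Vaughan or Heath--Brown identity with Vinogradov's bounds, followed by the usual $L^2$--$L^\infty$ passage via $\int_0^1|\sum_n w_n\1_{\Prime}(n+h_i)e(n\alpha)|^2\,d\alpha\ll_k(\log N)^{O(1)}\sum_n w_n$. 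The hard part, I expect, is precisely this last estimate: the sieve weight cannot be dropped without an unaffordable loss, so the divisor-bounded weight has to be threaded through the Vaughan-identity bilinear sums while cancellation is still extracted. Taking $N_0(m)$ large absorbs all error terms.
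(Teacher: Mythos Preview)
Your argument has a genuine gap at the step where you bound the correction terms. You assert that each term with $|T|=t$ is $\ll_m C_k^{\,3-t}(\sum_n w_n)^3$ ``by $\psi^-\le(m-1)w$ and $|\psi|\le(k+m)w$''. But those pointwise inequalities only give
\[
\Bigl|\sum_{n_1+n_2+n_3=M}\ \prod_{l\in T}\psi^-(n_l)\prod_{l\notin T}\psi(n_l)\Bigr|
\ \le\ (m-1)^{t}(k+m)^{3-t}\sum_{n_1+n_2+n_3=M} w_{n_1}w_{n_2}w_{n_3},
\]
so after evaluating the triple convolution of $w$ the bound is of order $(m-1)^t(k+m)^{3-t}$, not $C_k^{\,3-t}$. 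Since Maynard's constant satisfies $C_k\asymp\log k$, already the $t=1$ corrections (of order $k^{2}$) swamp your main term (of order $(\log k)^3$), no matter how large $k$ is. You cannot repair this by keeping the $\psi$-factors intact and only bounding $\psi^-$ pointwise, because $\psi$ is signed and the inequality $\psi^-\le(m-1)w$ does not propagate through a signed sum. Nor can you evaluate the correction terms themselves by the circle method: $\psi^-(n)=w_n\max\bigl(0,\,m-1-\sum_i\1_{\Prime}(n+h_i)\bigr)$, and opening the positive part produces products $\1_{\Prime}(n+h_i)\1_{\Prime}(n+h_j)$, exactly the inaccessible twin-prime correlations you set out to avoid.

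This is precisely the obstruction the paper's transference argument is built to sidestep. Instead of a signed decomposition, the paper works directly with the non-negative (normalised) indicator $f$ of the good $n$, majorised by a sieve weight $\nu$. Two ingredients replace your circle-method asymptotics: the Green--Tao $L^{5/2}$ restriction estimate for $\nu$ (Proposition~\ref{prop:GreenTaoLpnorm}), which plays the role of minor-arc control, and a lower bound for the average of $f$ over translates of a Bohr set (Theorem~\ref{th:MaynPrimesinBohrSets}), which plays the role of the major-arc main term. The Bohr-set lower bound is obtained by applying a weighted Maynard theorem with weights $\omega_n=\chi(t-n)$ for a smooth Bohr cutoff $\chi$, and verifying Maynard's hypotheses for such $\omega_n$ via exponential-sum estimates; your last paragraph is in fact close to what is needed here, so that part of your sketch is salvageable. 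The point is that with $f\ge 0$ throughout, Theorem~\ref{thm:transfer} delivers $f\ast f\ast f(N)\gg\delta^3$ without ever expanding into signed pieces.

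There is also a smaller loose end in your final step. For a good $n_l$ the least prime in $n_l+\CH$ sits at a shift $h_{j_l}$ depending on $n_l$, so $p_1+p_2+p_3=M+h_{j_1}+h_{j_2}+h_{j_3}$ need not hit $N$ for any single $M$. The paper handles this by pigeonholing over the $\binom{k}{m}$ possible $m$-subsets of $\CH$ before summing (see the deduction of Theorem~\ref{th:MaynPrimesinBohrSets} from Proposition~\ref{prop:AlmMaynPrimesinBohrSets}); this is routine once the main inequality is available, but it does need to be done explicitly.
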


In view of recent work of Helfgott \cite{Helfgott}, one can in fact take $N_0 = 7$ above (after possibly increasing $H$).

\begin{theorem}
\label{th:GoldbachChen}
Every large enough integer $N \equiv 3 \pmod{6}$ can be written in the form $N = p_1 + p_2 + p_3$, where, for $i = 1, 2, 3$, $p_i$ are primes such that $p_i + 2$ is a product of at most two primes.
\end{theorem}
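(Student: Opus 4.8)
The plan is to combine the Hardy–Littlewood circle method with Chen's sieve, transferring the latter into the arithmetic progressions that the major arcs demand. Concretely, I would run the circle method for the equation $N = p_1 + p_2 + p_3$ with $N \equiv 3 \pmod 6$, but instead of counting all triples of primes, I would weight each $p_i$ by a nonnegative sieve weight $\rho(p_i)$ that is supported on (or at least concentrated on) the set of primes $p$ with $p + 2 = P_2$, and that retains enough of a lower bound on average that Chen-type positivity survives. Writing $S_\rho(\alpha) = \sum_{p \le N} \rho(p) e(p\alpha)$, the count of interest is $\int_0^1 S_\rho(\alpha)^3 e(-N\alpha)\,d\alpha$, and the goal is to show it is positive (indeed of the expected order of magnitude $\gg \frac{N^2}{(\log N)^3}$ times the sieve savings).

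The main arcs contribute the main term. On the major arc near $a/q$ one expands $S_\rho(\alpha) \approx \frac{\mathfrak{S}_\rho(q)}{\phi(q)} \sum_{n \le N} \rho_0(n) e(n\beta)$ where $\rho_0$ is the ``expected density'' of the weight, so one needs an asymptotic (or at least a good lower bound with matching upper bound) for $\sum_{p \le N,\, p \equiv a \pmod q} \rho(p)$ uniformly for $q$ up to a small power of $\log N$ (or up to $N^\varepsilon$ if one wants the stronger Siegel–Walfisz-type input). This is where Chen's sieve has to be executed \emph{in arithmetic progressions}: one needs a lower bound sieve for $\#\{p \le x : p \equiv a \pmod q,\ p + 2 = P_2\}$ that is uniform in $q$, and this requires a Bombieri–Vinogradov-type equidistribution theorem for the relevant sieve quantities with a level of distribution exceeding $1/2$ on average over moduli coprime to $q$. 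The key technical input is therefore a version of the Bombieri–Vinogradov theorem (and its bilinear/Chen variant, with the switching-principle manoeuvre) where the modulus carries an extra fixed factor $q$; such results are standard in the literature on Chen's theorem in progressions, and I would invoke or adapt them. The singular series $\mathfrak{S}_\rho(N) = \prod_p (\cdots)$ must be shown to be positive, which forces the congruence condition $N \equiv 3 \pmod 6$: the condition $p_i + 2 = P_2$ in particular excludes $p_i = 3$ (since $5 = P_1$ but $3 + 2 = 5$ is actually fine — rather the genuine constraint is $p_i \ne 2$ and avoiding $p_i \equiv -2$ modulo small primes so that $p_i+2$ has no tiny prime factor forced), so one checks local solvability prime by prime; modulo $2$ and $3$ the constraint $N \equiv 3 \pmod 6$ is exactly what makes the $p = 2$ and $p = 3$ factors nonzero.

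The minor arcs must be shown to contribute a negligible error. Here I would use the bound $\sup_{\alpha \in \mathfrak{m}} |S_\rho(\alpha)| = o(N/(\log N)^{2})$ combined with the $L^2$ bound $\int_0^1 |S_\rho(\alpha)|^2\,d\alpha = \sum_p \rho(p)^2 \ll N (\log N)^{O(1)}$ via Cauchy–Schwarz, so that $\int_{\mathfrak m} |S_\rho|^3 \le (\sup_{\mathfrak m}|S_\rho|) \int_0^1 |S_\rho|^2 \ll o(N^2/(\log N)^{3-O(1)})$. The pointwise minor-arc bound for the sieve-weighted exponential sum should follow from Vinogradov-type estimates: since $\rho$ is built from a truncated divisor-type / Selberg sieve or from Chen's explicit weights, it decomposes into Type I and Type II sums to which the standard Vaughan/Vinogradov machinery (together with Weyl differencing) applies, giving a saving of any fixed power of $\log N$ on the minor arcs. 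One subtlety is that $\rho$ is only a lower-bound weight for the $P_2$ condition, so $S_\rho^3$ may pick up spurious contributions from primes $p$ with $p + 2$ not $P_2$; to handle this I would instead arrange $\rho$ so that $\rho(p) > 0 \implies p + 2 = P_2$ outright — this is exactly the feature of Chen's construction, where after the switching principle the weight is genuinely supported on $P_2$'s up to an acceptably small overcount that can be removed — so that positivity of the integral immediately yields the desired representation.

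I expect the main obstacle to be the \textbf{uniformity in $q$ of Chen's sieve lower bound on the major arcs}: one must re-run the entire linear sieve plus switching-principle argument with an extra modulus $q$ present, controlling the resulting error terms (which now involve sums over two moduli, the sieve modulus $d$ and the progression modulus $q$) via a Bombieri–Vinogradov theorem with a level of distribution $\theta > 1/2$ that is robust to this extra factor. Getting the bilinear (Type II) remainder terms in Chen's argument to equidistribute uniformly in $q$ — rather than just the Type I / Siegel–Walfisz parts — is the genuinely delicate point, and it is where most of the technical weight of the proof will sit.
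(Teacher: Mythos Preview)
Your proposal has a genuine gap at the point where you try to reconcile three incompatible demands on the weight $\rho$: you want it to be (i) nonnegative, so that positivity of $\sum_{p_1+p_2+p_3=N}\rho(p_1)\rho(p_2)\rho(p_3)$ yields an actual representation; (ii) supported on primes $p$ with $p+2=P_2$, so that the representation is of the right kind; and (iii) decomposable into Type~I/II sums, so that you can bound $S_\rho(\alpha)$ pointwise on the minor arcs. Chen's weight satisfies (ii) and (iii) but \emph{not} (i): it has the shape $1-\tfrac12\sum(\cdots)-\tfrac12\sum(\cdots)-\sum(\cdots)$ and takes negative values, so positivity of the triple convolution does not produce a solution (a positive product of three signed numbers need not have all three factors positive). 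The indicator $1_{p+2=P_2}$ satisfies (i) and (ii) but not (iii): no Type~I/II decomposition for it is known, so you cannot control its exponential sum on the minor arcs. No weight with all three properties is available, and this is exactly why the classical circle method stalls here; the earlier result of Matom\"aki that reached only $P_2,P_2,P_7$ already pushed that method to its apparent limit.

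The paper therefore abandons the direct circle method in favour of a transference argument (Theorem~\ref{thm:transfer}). One takes $f$ to be a $W$-tricked, normalised version of the \emph{genuine} indicator of Chen primes, controls $\sum_\xi|\widehat f(\xi)|^{5/2}$ via the Green--Tao restriction estimate (Proposition~\ref{prop:GreenTaoLpnorm}), and---crucially, because the equation $n_1+n_2+n_3=N$ is not translation-invariant---verifies the additional hypothesis that $f$ has positive average on every translate of a fixed Bohr set (Theorem~\ref{th:ChenPrimesinBohrSets}). This last step is where the real work lies: one Fourier-expands the smooth Bohr cutoff and runs Chen's entire sieve with an exponential weight $e(\alpha n)$ attached, which in turn requires Bombieri--Vinogradov-type Type~I/II estimates for exponential sums over primes and almost-primes in progressions (Lemmas~\ref{le:MajorArcsPrimes}--\ref{le:MinorArcChenprimes}). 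So the Type~I/II machinery you anticipated does appear, but it is used to prove a \emph{lower bound} for Chen primes in Bohr sets, not to bound an exponential sum over Chen primes directly on minor arcs. Your diagnosis that ``uniformity in $q$ of Chen's lower bound on the major arcs'' is the main obstacle is therefore misplaced: Chen's theorem in progressions to small moduli is routine; the sign/support incompatibility above, and its resolution via transference plus Bohr-set positivity, is what carries the proof.
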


Related problems have been considered before. Green and Tao \cite{GreenTaoRestr} showed that there are infinitely many three-term arithmetic progressions in the almost twin primes considered in Theorem~\ref{th:GoldbachChen}, and this has been generalized in \cite{kAP-Chen} to handle $k$-term progressions for any fixed $k$. See \cite{kAP-Maynard} for analogous results for the almost twin primes considered in Theorem~\ref{th:GoldbachMaynard}. As we will discuss in the next section, since the equation $N = p_1 + p_2 + p_3$ is not translation-invariant, for subsets of the primes the ternary Goldbach problem involves additional complications compared to the problem of finding three-term arithmetic progressions. For the ternary Goldbach problem, Matom\"aki \cite{MatomakiB-V} previously showed that $N = p_1 + p_2 + p_3$ is solvable in primes with $p_1+2 = P_2$, $p_2 + 2 = P_2'$, and $p_3 + 2 = P_7$.

It is worth mentioning that a vast generalization of Vinogradov's theorem has been proved by Green and Tao \cite{GT-linear-equations}, with a crucial ingredient from the work of Green, Tao, and Ziegler \cite{GTZ}. They introduced the concept of higher order Fourier analysis, which allows one to handle all linear systems of finite complexity (that excludes the twin prime or the binary Goldbach case). We plan to return to a generalization of Theorem \ref{th:GoldbachMaynard} in this direction in a future work.

\subsection*{Acknowledgements}  
This work started when both authors were visiting CRM in Montreal during the analytic part of the thematic year in number theory in Fall 2014, whose hospitality is greatly appreciated. The authors are grateful to Joni Ter{\"a}v{\"a}inen for pointing out a few mistakes in an earlier draft, and to the anonymous referee for valuable suggestions.

\section{Outline of proof}
\label{sec:outline}
In this section we describe the main ingredients in the proofs of Theorems~\ref{th:GoldbachMaynard} and~\ref{th:GoldbachChen}. The general strategy for proving both theorems follows closely the transference principle initiated in \cite{Green3AP}. Let $f$ be the (weighted) indicator function of the considered subset of the primes, and let $\nu$ be a sieve majorant so that $f \leq \nu$ and that $f$ has positive density in $\nu$. The Fourier analytic transference principle in \cite{Green3AP} produces a dense model $\widetilde{f}$ of $f$, such that $0 \leq \widetilde{f} \leq 1$ and that $\widetilde{f}$ has positive average. Moreover,
\begin{equation}\label{eq:f-ftilde} 
\sum_{\substack{1 \leq n_1,n_2,n_3 \leq N \\ n_1 + n_2 + n_3 = N}} f(n_1)f(n_2)f(n_3) \approx \sum_{\substack{1 \leq n_1,n_2,n_3 \leq N \\ n_1 + n_2 + n_3 = N}} \widetilde{f}(n_1) \widetilde{f} (n_2) \widetilde{f}(n_3). 
\end{equation}
If we are instead looking for solutions of a homogeneous linear equation such as $n_1 + n_2 = 2n_3$, then the right hand side above is bounded from below by Roth's theorem. In this way one can find arithmetic progressions in subsets of primes~\cite{GreenTaoRestr, kAP-Chen, kAP-Maynard}. In our current case, the right hand side above could vanish if, for example, $\widetilde{f}$ is supported on $[1,N/4]$ or if, writing $\Vert x \Vert$ for the distance from the nearest integer, we had $\Vert \sqrt{2}N \Vert > 3/10$ and $\widetilde{f}$ is supported on numbers $n$ for which $\Vert \sqrt{2} n \Vert < 1/10$.

To get around this issue, we need to know more about the structure of $\widetilde{f}$. Examining the proof of the transference principle, one may observe that $\widetilde{f}$ is the convolution of $f$ with a Bohr set. If we ensure that $\widetilde{f}$ is bounded below pointwise, then the right hand side of \eqref{eq:f-ftilde} is certainly bounded below as well. This pointwise lower bound translates to the requirement that primes from the considered subset can be found in Bohr sets.

\subsection{Smooth Bohr cutoff}

Given a cyclic group $G = \Z/N\Z$, a subset $\Omega \subseteq G$ and $\eta \in (0, 1/2]$, define the Bohr set
\[ B = \Bohr(\Omega, \eta) = \{ n \in G: \|\xi n / N\| \leq \eta\text{ for all }\xi \in \Omega\}. \]
For technical reasons, it is more convenient to study a smooth version of $1_B$, whose Fourier spectrum has bounded size.

For $\eta \in (0, 1/2]$ and a positive integer $D$, let $S_{D, \eta}^+(x)\colon \R/\Z \to [0, 2]$ be the Selberg polynomial of degree $D$ that majorizes the interval $[-\eta, \eta]$.  The definition can be bound in~\cite[Chapter 1, formula $21^+$]{Montgomery10Lect} and is given in~\eqref{eq:SD+def} below. The Selberg polynomial has a Fourier expansion
\[
S_{D, \eta}^+(x) = \sum_{|k| \leq D} \widehat{S}_{D, \eta}^+(k) e(kx)
\]
with $|\widehat{S}_{D, \eta}^+(k)| \leq \frac{1}{D+1}+\min\{2\eta, 1/|k|\}$ by \cite[Chapter 1, formula (22)]{Montgomery10Lect}.
\begin{definition}[Smooth Bohr cutoff]\label{def:smooth-Bohr}
Given a cyclic group $G = \Z/N\Z$, a subset $\Omega \subseteq G$ and $\eta \in (0, 1/2]$, let $D = \lceil 4/\eta \rceil^{2|\Omega|}$ and define the smooth Bohr cutoff $\chi = \chi_{\Omega, \eta} \colon G \to \mathbb{R}_{\geq 0}$ by
\[
\chi(n) := \prod_{\xi \in \Omega} S^+_{D, \eta}(\xi n / N).
\]
\end{definition}
Note that since $S^+_{D, \eta}(x)$ is a majorant of $1_{\Vert x \Vert \leq \eta}(x)$, we have the lower bound $\chi(n) \geq 1$ for $n \in \Bohr(\Omega, \eta)$.

\begin{remark}
Using the Selberg polynomials $S^+_{D, \eta}$ is not essential here --- one could replace them for instance by the function $(\cos \pi x)^{D}$ for some large even $D$ depending on $\eta$ and $|\Omega|$. This way $\chi(n)$ would no longer be at least $1$ in the Bohr set, but one could easily prove good enough variants of the lemmas we need.
\end{remark}

\subsection{A transference type result}

Let $G = \Z/N\Z$. We use the standard notation $\E_{n \in G}$ to denote the average $N^{-1}\sum_{n \in G}$. For a function $f: G \rightarrow \C$ , its Fourier transform is defined by
\[ 
\widehat{f}(\xi) = \E_{n \in G} f(n) e\left( -\frac{\xi n}{N} \right),
\]
and its $L^1$-norm is defined by
\[ \|f\|_1 = \E_{n \in G} |f(n)|. \]
For two functions $f, g: G\rightarrow \C$, their convolution is defined by
\[ f*g(t) = \E_{n \in G} f(n) g(t-n). \]
In Section~\ref{sec:transfer} we prove the following transference type result. It says that we can handle a non-homogeneous linear equation if we have some additional hypotheses about averages in Bohr sets. 

\begin{theorem}\label{thm:transfer}
Let $G = \Z/N\Z$ for some large $N$, and let $f_1: G \rightarrow \R_{\geq 0}$ be a function. Let $K \geq 1$ and $\delta > 0$ be parameters. There exists a Bohr cutoff $\chi = \chi_{\Omega,\eta}$ (depending on $f_1$) with $|\Omega| \ll_{K,\delta} 1$, $1 \in \Omega$, and $\eta = \eta(K,\delta) \in (0,0.05)$, such that the following statement holds. 
Let $f_2,f_3: G \rightarrow \R_{\geq 0}$ be functions satisfying
\begin{equation}\label{eq:rich-Bohr} 
f_i*\chi(t) \geq \delta \|\chi\|_1,
\end{equation}
for every $t \in [N/4,N/2)$ and $i \in \{2,3\}$. Suppose that
\begin{equation}
\label{eq:f1avlarge}
\sum_{0.1N \leq n \leq 0.4N} f_1(n) \geq \delta N,
\end{equation}
and that
\begin{equation}
\label{eq:5/2norm}
\sum_{\xi \in G} |\widehat{f_i}(\xi)|^{5/2} \leq K 
\end{equation}
for every $i \in \{1,2,3\}$. Then $f_1*f_2*f_3(N) \geq \delta^3/200$.
\end{theorem}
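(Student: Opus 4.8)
The strategy is the Fourier-analytic transference principle. Write $f_i = g_i + h_i$ where $g_i = f_i * \chi$ is the "structured" part (the Bohr-set convolution, with $\chi = \chi_{\Omega,\eta}$ to be chosen) and $h_i = f_i - f_i * \chi$ is the "pseudorandom" part, which should be small in a suitable sense. Expanding $f_1 * f_2 * f_3(N)$ multilinearly into eight terms, the plan is to show that the main term $g_1 * g_2 * g_3(N)$ is bounded below by a constant depending on $\delta$, while the other seven terms are negligible. On the Fourier side, $\widehat{g_i}(\xi) = \widehat{f_i}(\xi)\widehat{\chi}(\xi)$ and $\widehat{h_i}(\xi) = \widehat{f_i}(\xi)(1 - \widehat{\chi}(\xi))$; since $\chi$ is a product of Selberg polynomials $S^+_{D,\eta}(\xi n/N)$ over $\xi \in \Omega$, its Fourier transform is supported on a set of size $\ll_{K,\delta} 1$, the sums $\widehat{S}^+_{D,\eta}(k)$ being controlled by the stated bound $|\widehat{S}^+_{D,\eta}(k)| \le \frac{1}{D+1} + \min\{2\eta,1/|k|\}$, and $\widehat\chi(0) = \|\chi\|_1(1 + O(\eta))$ roughly.

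First I would choose $\Omega$. The point is that $h_i$ is small in $L^\infty$ of its Fourier transform precisely off a small set of large Fourier coefficients of $f_i$. Define $\Omega$ to be $\{1\}$ together with all frequencies $\xi$ with $|\widehat{f_1}(\xi)|$ (or a suitable threshold among $f_1,f_2,f_3$, but the theorem only needs $f_1$ for this since $f_2,f_3$ enter via the Bohr hypothesis) exceeding a threshold $\epsilon = \epsilon(K,\delta)$; by the $\ell^{5/2}$ bound \eqref{eq:5/2norm} we have $|\Omega| \le 1 + K\epsilon^{-5/2} \ll_{K,\delta} 1$. For $\xi$ with $|\widehat{f_i}(\xi)|$ small we get $|\widehat{h_i}(\xi)| \le |\widehat{f_i}(\xi)| \le \epsilon$ cheaply; for the (few) $\xi \in \Omega$ we instead use that $1 - \widehat\chi(\xi)$ is small, because $S^+_{D,\eta}$ is close to $1$ at $\|x\| \le \eta$ and $\Omega$-frequencies are designed into $\chi$ — here one needs $\eta$ small enough and $D = \lceil 4/\eta\rceil^{2|\Omega|}$ large enough (as in Definition \ref{def:smooth-Bohr}) that $\widehat\chi(\xi) \ge (1 - O(\eta))\|\chi\|_1$ on $\Omega$. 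Thus $\|\widehat{h_i}\|_\infty \ll \epsilon + \eta \|\chi\|_1$-type bound after normalization. Combining with $\|\widehat{f_i}\|_{5/2}^{5/2} \le K$, a standard interpolation (Hölder on the Fourier side: $\sum |\widehat{h_i}| \le \|\widehat{h_i}\|_\infty^{3/5} \|\widehat{f_i}\|_{5/2}^{5/2 \cdot 2/5} $) gives $\|h_i\|_{A} := \sum_\xi |\widehat{h_i}(\xi)| = o_{K,\delta}(1)$. Any convolution product containing at least one factor $h_i$ is then $\le \prod (\text{norms})$ bounded by $\|h_i\|_A \|f_j\|_A \|f_k\|_A$... but $\|f_j\|_A$ need not be bounded, so more carefully I would bound the error terms by $\|\widehat{h_i}\|_\infty$ times $\ell^2$ or $\ell^{5/2}$ norms of the other two factors via Hölder with exponents $(5/2,5/2,5/2)$ plus the trivial $L^1 \le$ something, keeping everything $O_{K,\delta}(\|\widehat{h_i}\|_\infty^c)$, which is $o(1)$ as $\epsilon,\eta \to 0$.

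The main term requires the Bohr hypothesis. Since $g_i = f_i * \chi$ and $1 \in \Omega$ forces $\chi$ to be reasonably spread, the hypothesis \eqref{eq:rich-Bohr} gives $g_i(t) \ge \delta\|\chi\|_1$ for all $t \in [N/4, N/2)$ and $i \in \{2,3\}$, while \eqref{eq:f1avlarge} gives $\sum_{0.1N \le n \le 0.4 N} f_1(n) \ge \delta N$, and passing to $g_1 = f_1 * \chi$ costs only a factor $1 - O(\eta)$ in the relevant range since $\chi \ge 0$ with $\widehat\chi(0)/N \approx \|\chi\|_1$ close to... — here one uses $1 \in \Omega$ and smallness of $\eta$ to ensure $\sum_{n} g_1(n) \mathbf{1}_{[0.09N, 0.41N]}(n) \ge (\delta/2) N \|\chi\|_1$, say. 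Then $g_1 * g_2 * g_3(N) = N^{-2}\sum_{n_1+n_2+n_3 = N} g_1(n_1)g_2(n_2)g_3(n_3) \ge N^{-2} \cdot \frac{\delta}{2}N\|\chi\|_1 \cdot (\delta\|\chi\|_1)^2 \cdot (\text{count of valid } (n_2,n_3))$; choosing $n_1 \in [0.09N,0.41N]$ forces $n_2 + n_3 = N - n_1 \in [0.59N, 0.91N]$, and one can select $n_2, n_3 \in [N/4, N/2)$ freely on an interval of length $\gg N$, giving $g_1 * g_2 * g_3(N) \gg \delta^3 \|\chi\|_1^3 / N$... after the correct normalization (note $g_i * g_j$ vs $f_i * \chi$: actually $g_2 * g_3 = f_2 * f_3 * \chi * \chi$, so one should instead keep only \emph{one} copy of $\chi$ per index by writing the main term as $f_1 * (f_2*\chi) * (f_3 * \chi)$ convolved once more, or more cleanly note $g_1*g_2*g_3(N) = (f_1*\chi)*(f_2*\chi)*(f_3*\chi)(N) \ge$ lower bound since each factor is pointwise $\ge$ its stated bound on the relevant range). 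Tracking constants carefully — the $0.1, 0.4, N/4, N/2$ ranges are chosen precisely so the convolution support works out — yields the clean bound $f_1 * f_2 * f_3(N) \ge \delta^3/200$ once the error terms are made smaller than, say, $\delta^3/400$ by taking $\epsilon$ and $\eta$ sufficiently small in terms of $K$ and $\delta$.

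\textbf{Main obstacle.} The delicate point is the simultaneous choice of the parameters $\epsilon$ (threshold defining $\Omega$), $\eta$, and $D$: $|\Omega|$ depends on $\epsilon$ via the $\ell^{5/2}$ bound, then $\eta$ must be small relative to $\delta$ and (implicitly) to $|\Omega|$ to kill the $\Omega$-frequency error and to not lose in the main term, and $D$ is then forced large by Definition \ref{def:smooth-Bohr}; one must check there is no circularity, i.e. that $\eta$ can be chosen as a function of $K, \delta$ alone. A secondary technical nuisance is controlling the error convolution terms when the $f_j$ themselves are not bounded in $\ell^1(\widehat{\ \cdot\ })$ — the resolution is to always pair the small factor $\|\widehat{h_i}\|_\infty$ with Hölder against the $\ell^{5/2}$-bounded transforms rather than with $\ell^1$ norms.
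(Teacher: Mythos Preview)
Your overall strategy matches the paper's, but there is a genuine gap in the error-term analysis, and the paper's proof sidesteps it by a cleaner decomposition.

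\textbf{The gap.} You claim that $\|\widehat{h_i}\|_\infty$ is small for every $i$, arguing that off $\Omega$ one has $|\widehat{h_i}(\xi)| \le |\widehat{f_i}(\xi)| \le \varepsilon$. But $\Omega$ is defined using the large Fourier coefficients of $f_1$ \emph{only}; for $i=2,3$ there is no reason $|\widehat{f_i}(\xi)|$ should be small at frequencies $\xi \notin \Omega$. You briefly consider enlarging $\Omega$ to include the large spectrum of $f_2,f_3$ as well, but this is not allowed: the theorem explicitly requires $\chi$ (hence $\Omega$) to depend only on $f_1$, and this is essential in the application, where $\chi$ is fixed before $f_2,f_3$ are even constructed.

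\textbf{How the paper handles it.} The paper decomposes only $f_2$ and $f_3$, setting $g_i = \tfrac{1}{\|\chi\|_1} f_i * \chi$ and $h_i = f_i - g_i$ for $i=2,3$, and leaves $f_1$ intact. This gives a four-term expansion
\[
f_1*f_2*f_3 = f_1*g_2*g_3 + f_1*g_2*h_3 + f_1*h_2*g_3 + f_1*h_2*h_3.
\]
Each of the three error terms now contains both a factor $\widehat{f_1}$ and a factor $\widehat{h_j}$ with $j\in\{2,3\}$. One then splits the Fourier sum: for $\xi\notin\Omega$ use $|\widehat{f_1}(\xi)|<\varepsilon$ and H\"older against the $\ell^{5/2}$ bounds; for $\xi\in\Omega$ use $|\widehat{h_j}(\xi)| = |\widehat{f_j}(\xi)|\cdot |1 - \widehat{\chi}(\xi)/\|\chi\|_1| \le K^{2/5}\cdot 30\eta$, which holds because $\chi$ is nearly constant on the Bohr set and Lemma~\ref{le:BohrCutOffProp} controls the tail. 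No bound on $|\widehat{h_j}(\xi)|$ for $\xi\notin\Omega$ is ever needed.

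Two smaller points: (i) you omit the normalization $1/\|\chi\|_1$ in $g_i$, which is why your main-term computation acquires stray factors of $\|\chi\|_1$ and becomes tangled; with the correct normalization the hypothesis \eqref{eq:rich-Bohr} reads simply $g_i(t)\ge\delta$ on $[N/4,N/2)$. (ii) Keeping $f_1$ undecomposed also makes the main term $f_1*g_2*g_3(N)$ immediate from \eqref{eq:f1avlarge} and the pointwise bound on $g_2,g_3$, avoiding your detour through an average bound for $g_1$. Your eight-term expansion can in fact be repaired (every term still carries a $\widehat{f_1}$ factor via $g_1$ or $h_1$), but the argument you wrote down does not do this, and the four-term version is both shorter and conceptually cleaner.
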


The artificial requirement $1 \in \Omega$ and the assumption that \eqref{eq:rich-Bohr} holds only for $t \in [N/4, N/2)$ come from the way Theorem~\ref{thm:transfer} will be applied. In order to avoid wrapping around issues, we will apply Theorem~\ref{thm:transfer} with each $f_i$ supported on $[N/4,N/2)$. If $1\in \Omega$ and $\eta < 0.1$, then $B(\Omega, \eta) \subset (-0.1N, 0.1N)$, so that \eqref{eq:rich-Bohr} can be expected to hold when $t \in [N/4, N/2)$.

We will see that the condition~\eqref{eq:5/2norm} for the types of almost twin primes we consider follows easily from the work of Green and Tao\cite{GreenTaoRestr}.

\subsection{Almost twin primes in Bohr sets}

To apply Theorem~\ref{thm:transfer} to prove Theorems~\ref{th:GoldbachMaynard} and~\ref{th:GoldbachChen} in Section~\ref{sec:proofsOfMainTheorems}, we need to verify the hypothesis \eqref{eq:rich-Bohr} for the indicator functions of the types of almost twin primes we consider. This is achieved in Theorems~\ref{th:MaynPrimesinBohrSets} and \ref{th:ChenPrimesinBohrSets}, in statements of which we use the following definition.

\begin{definition}\label{def:Fourier-complexity}
For a function $\chi: \Z \rightarrow \C$, we say that it has Fourier complexity at most $M$ if $\chi$ can be written as a linear combination of at most $M$ exponential phases:
\[ \chi(n) = \sum_{i=1}^M b_i e(\alpha_i n), \]
for some $|b_i| \leq M$, and $\alpha_i \in \R/\Z$. 
\end{definition}
Note that since we do not request $b_i$ to be non-zero, if $\chi$ is of Fourier complexity at most $M$, then it is of Fourier complexity at most $M'$ for any $M' \geq M$. Note also that the smooth Bohr cutoff $\chi_{\Omega,\eta}$ in Definition~\ref{def:smooth-Bohr} (extended to $\Z$ in the obvious manner) has Fourier complexity at most $O_{|\Omega|,\eta}(1)$.

\begin{theorem}
\label{th:MaynPrimesinBohrSets}
For any positive integer $m$, there exist a positive integer $k = k(m)$ and positive constants $\delta_0 = \delta_0(m)$ and $\rho = \rho(m)$ such that the following holds. Let $\chi: \Z \rightarrow \R_{\geq 0}$ be a function with Fourier complexity at most $M$ for some $M \geq 1$, and let $\varepsilon > 0$ be given. Let $W = \prod_{p \leq w} p$ with $w$ large enough in terms of $m, M$ and $\varepsilon$, and let $(b, W) = 1$. There exist non-zero distinct integers $h_1, \dotsc, h_{k-1} = O_{m, M, \varepsilon}(1)$ with $h_j$ positive for $j= 1, \dotsc, m-1$, and a positive integer $N_0 = N_0(m, M, \varepsilon, w)$ such that, for every $N \geq N_0$ and $|t| \leq 5 N$,
\[ 
\sum_{\substack{ N \leq n < 2N \\ Wn+b \in \mathbb{P} \\ Wn+b+Wh_i \in \mathbb{P} \text{ for $i = 1, \dotsc, m-1$} \\ p \mid \prod_{i = m}^{k-1} (Wn+b+Wh_i) \implies p \geq N^{\rho}}} \chi(t-n) \geq \delta_0 \frac{1}{(\log N)^k} \frac{W^k}{\varphi(W)^k} \left(\sum_{N \leq n < 2N} \chi(t-n) - \frac{N}{w^{1/3}} + O_m(\varepsilon N)\right).
\]
\end{theorem}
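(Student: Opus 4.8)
Throughout write $h_0=0$, $L_i(n)=Wn+b+Wh_i$ for $0\le i\le k-1$, where $k=k(m)$ will be chosen large and $\{h_1,\dots,h_{k-1}\}$ is a fixed symmetric admissible tuple of distinct \emph{positive} integers of size $O_m(1)$; then ``$h_j$ positive for $j=1,\dots,m-1$'' will hold after any relabelling. The plan is to run the Maynard--Tao $k$-dimensional sieve on the forms $L_i$, but carrying the extra weight $\chi(t-n)$ through every estimate. Concretely, let $R=N^{\theta/2}$ with $\theta<\tfrac12$ the Bombieri--Vinogradov level, let $F$ be a fixed smooth function supported on the simplex $\{x_i\ge 0,\ \sum x_i\le 1\}$, let $\lambda_{\ve d}$ be the associated sieve weights (supported on $d_i$ squarefree, coprime to $W$, $\prod_i d_i<R$) and $w_n=\bigl(\sum_{d_i\mid L_i(n)}\lambda_{\ve d}\bigr)^2$, and insert the presieving condition $\prod_i \mathbf 1_{(L_i(n),P(z))=1}$ at level $z=R^{\varepsilon_0}$ for a small fixed $\varepsilon_0=\varepsilon_0(k)$. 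A key point is that on the support of the presieving condition each $L_i(n)$ has boundedly many prime factors, so $w_n\ll_{k}(\log N)^{O_k(1)}$ there; and on the smaller support where in addition $L_0,\dots,L_{m-1}\in\Prime$ and $L_m,\dots,L_{k-1}$ are $N^{\rho}$-rough (with $\rho=\rho(m)$ chosen $\ge\theta/2$) every $d_i$ is forced to be $1$, so $w_n$ is there identically the constant $\Lambda_0=\lambda_{1,\dots,1}^2$. Granting the Maynard-weighted lower bound below, the stated inequality for a suitable choice of the $h_i$ follows by the usual pigeonhole over which $m$ of the $k$ forms are prime (relabelling so that these become positions $0,1,\dots,m-1$).

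The Maynard-weighted statement to prove is that
\[
\sum_{N\le n<2N} w_n\,\chi(t-n)\Bigl(\sum_{i=0}^{k-1}\mathbf 1_{L_i(n)\in\Prime}-(m-1)\Bigr)\prod_{i}\mathbf 1_{(L_i(n),P(z))=1}\ \gg_m\ \frac{\Lambda_0}{(\log N)^{k}}\frac{W^k}{\varphi(W)^k}\Bigl(\sum_{N\le n<2N}\chi(t-n)-\frac{N}{w^{1/3}}+O_m(\varepsilon N)\Bigr).
\]
The left side splits as $\sum_{i_0}\Sigma_{i_0}(t)-(m-1)\Sigma_0(t)$ with $\Sigma_0(t)=\sum_n w_n\chi(t-n)\prod_i\mathbf 1_{(L_i,P(z))=1}$ and $\Sigma_{i_0}(t)=\sum_n w_n\chi(t-n)\mathbf 1_{L_{i_0}\in\Prime}\prod_i\mathbf 1_{(L_i,P(z))=1}$. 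One evaluates each by the standard device of expanding $w_n=\sum_{\ve d,\ve e}\lambda_{\ve d}\lambda_{\ve e}\prod_i\mathbf 1_{[d_i,e_i]\mid L_i(n)}$ and reducing to sums of $\chi(t-n)$ over $n$ in progressions modulo $q\ll N^{\theta+o(1)}$ (for $\Sigma_{i_0}$ one first forces $d_{i_0}=e_{i_0}=1$ and detects $L_{i_0}(n)\in\Prime$ by Bombieri--Vinogradov in progressions modulo $Wq$, legitimate since $WR^2<N^{1/2-o(1)}$). The new ingredient is the equidistribution lemma: for any progression $n\equiv a\ (q)$ with $q\ll N^{\theta+o(1)}$ and $|t|\le 5N$,
\[
\sum_{\substack{N\le n<2N\\ n\equiv a\ (q)}}\chi(t-n)=\frac1q\sum_{N\le n<2N}\chi(t-n)+(\text{error}),
\]
proved by inserting the Fourier expansion $\chi(n)=\sum_{i=1}^M b_ie(\alpha_in)$ and completing each $e(-\alpha_i n)$-sum over the progression; the ``non-main'' terms of the completed sum either are bounded trivially, or come from frequencies $\alpha_i$ lying very close to a nonzero rational with denominator dividing $W$ and then cancel upon summing over the admissible residues $a$ (this is exactly where the $W$-trick is used, and where one needs $w$ large in terms of $M$), or come from at most $M$ remaining frequencies each of which contributes $\le\varepsilon N$; the residual genuinely-smooth discrepancy is $\ll N/w^{1/3}$. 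Summing the error back against $\lambda_{\ve d}\lambda_{\ve e}$, whose total mass is only $N^{\theta+o(1)}$, keeps it negligible against the main term.

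Assembling, $\Sigma_0(t)=c_0\bigl(\sum_n\chi(t-n)-N/w^{1/3}+O_m(\varepsilon N)\bigr)$ and $\Sigma_{i_0}(t)=c_{i_0}\bigl(\sum_n\chi(t-n)-N/w^{1/3}+O_m(\varepsilon N)\bigr)$, where $c_0,c_{i_0}>0$ are Maynard's integrals $I_k(F)$, $J_k^{(i_0)}(F)$ multiplied by the positive presieving and $W$-trick densities — crucially \emph{independent of $t$}, so the pigeonhole step can indeed be carried out uniformly. Hence
\[
\sum_{i_0}\Sigma_{i_0}(t)-(m-1)\Sigma_0(t)=\Bigl(\sum_{i_0=0}^{k-1}c_{i_0}-(m-1)c_0\Bigr)\Bigl(\sum_n\chi(t-n)-\frac{N}{w^{1/3}}+O_m(\varepsilon N)\Bigr),
\]
and by Maynard's choice of $F$ and $k=k(m)$ large (so that $\sum_{i_0}J_k^{(i_0)}(F)/I_k(F)$ exceeds $2(m-1)/\theta$) the bracketed constant is positive, say $\ge c(m)$ times the common density factors, which together produce the size $\Lambda_0(\log N)^{-k}(W/\varphi(W))^k$ claimed above. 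Dividing by $\Lambda_0$ and combining with the pigeonhole reduction yields the theorem with $\rho=\rho(m)$, $k=k(m)$ as above and $\delta_0=\delta_0(m)$ depending only on $m$.

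The main obstacle is precisely the equidistribution lemma for $\chi$ in progressions, needed uniformly over $q$ up to $N^{\theta+o(1)}$ and over the wide range $|t|\le 5N$: one must show that after averaging over admissible residues the only Fourier modes of $\chi$ that fail to equidistribute are those forced to cancel by the $W$-trick, and organise the remaining discrepancy into the clean shape $N/w^{1/3}+O_m(\varepsilon N)$ while keeping the argument compatible with the Bombieri--Vinogradov input used for the $\Sigma_{i_0}$. Once the weight $\chi(t-n)$ is known to behave like $\tfrac1q\sum_n\chi(t-n)$ in progressions, the rest is a routine transcription of the Maynard--Tao computation with presieving.
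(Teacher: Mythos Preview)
Your equidistribution lemma has a genuine gap. The trichotomy you propose --- trivially bounded; close to a rational with denominator dividing $W$ and hence cancelled by the $W$-trick; or contributing $\le\varepsilon N$ --- misses those frequencies $\alpha_i$ close to $a'/q'$ with $w<q'\le(\log N)^{O_k(1)}$. For such a mode, $e(\alpha_i(t-n))$ is genuinely not equidistributed in progressions mod $r$ whenever $q'\mid r$: the discrepancy for a single $r$ is $\asymp N/r$, and summing over $r\le N^\theta$ with $q'\mid r$ gives $\asymp N(\log N)/q'$, far exceeding the $(\log N)^{-101k^2}\sum_n\omega_n$ demanded by Maynard's hypothesis. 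The $W$-trick is useless here since $q'\nmid W$, and there is no mechanism forcing this into the $\varepsilon N$ bin. (Take $\chi(n)=1+\cos(2\pi n/p_0)$ with $p_0$ the least prime above $w$.) The paper handles this by first extracting an auxiliary modulus $Q\le(\log N)^{O_{M,k}(1)}$ such that every denominator $q_i$ arising from $\chi$ either divides $Q$ or satisfies $q_i/(q_i,Q^2)>(\log N)^A$; one then splits $n$ into residue classes $c_0\pmod Q$, applies Maynard's theorem as a black box (at level $x^{1/8}$, not $x^{1/2-\varepsilon}$, because the minor-arc exponential sum estimates over primes in progressions only tolerate that level) with weights $\omega_n=\chi(t-Qn-c_0)$ and forms $W(Qn+c_0+h_i)+b$, and finally sums back over admissible $c_0\pmod Q$ via a Ramanujan-sum argument. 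Without the $Q$-splitting, the Maynard hypotheses simply fail for a general $\chi$ of bounded Fourier complexity.

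There is a second, smaller gap in the relabelling step. After pigeonholing to a set $\mathcal J$ of $m$ prime positions, to arrange that position $0$ (i.e.\ $Wn+b$ itself) is prime you must substitute $n'=n+h_r$ for the least $h_r$ with $r\in\mathcal J$, which replaces $\chi(t-n)$ by $\chi(t-n'+h_r)$. To recover $\chi(t-n')$ up to $O(\varepsilon)$ one needs $\|\alpha_ih_r\|\le\varepsilon/M^2$ for every phase $\alpha_i$ of $\chi$; the paper secures this by choosing the shifts $h_1',\dots,h_k'$ via simultaneous Dirichlet approximation on the $\alpha_i$, which is precisely why the $h_i$ in the statement are $O_{m,M,\varepsilon}(1)$ rather than $O_m(1)$. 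A fixed admissible tuple depending only on $m$, as you take, cannot accomplish this shift.
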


\begin{theorem}\label{th:ChenPrimesinBohrSets}
There exists a positive constant $\delta_1$ such that the following holds. Let $\chi: \Z \rightarrow \R_{\geq 0}$ be a function with Fourier complexity at most $M$ for some $M \geq 1$. Let $W = \prod_{p \leq w} p$ with $w$ large enough in terms of $M$, and let $(b, W) = 1$. There exists a positive constant $N_0 = N_0(M, w)$ such that, for every $N \geq N_0$ and $|t| \leq 5 N$,
\[ \sum_{\substack{N \leq n < 2N \\ Wn + b \in \Prime \\ Wn + b + 2 = P_2 \\ p \mid Wn + b + 2 \implies p \geq N^{1/100}}} \chi(t - n) \geq \delta_1 \frac{1}{(\log N)^2} \frac{W^2}{\varphi(W)^2} \left( \sum_{N \leq n < 2N} \chi(t - n) - \frac{N}{w^{1/3}} \right). \]
\end{theorem}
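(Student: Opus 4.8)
The plan is to combine Chen's switching-sieve argument with the structured cutoff $\chi(t-n)$ playing the role of a weight. Write the left-hand side as $\sum_{N\le n<2N}\chi(t-n)a(n)$, where $a(n)$ is the indicator of the Chen primes in the $(b,W)$-progression, i.e. $Wn+b\in\Prime$, $Wn+b+2=P_2$ with all prime factors $\ge N^{1/100}$. Since $\chi$ has Fourier complexity at most $M$, expand $\chi(t-n)=\sum_{i=1}^M b_i e(\alpha_i t)e(-\alpha_i n)$; thus it suffices to produce, for every frequency $\alpha\in\R/\Z$, an asymptotic (or a lower bound of the right shape) for $\sum_{N\le n<2N}a(n)e(\alpha n)$ that is essentially uniform in $\alpha$, with the main term matching a corresponding exponential sum over all $n$. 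The point is that Chen's lower-bound sieve construction for $a(n)$ is built from Type-I and Type-II sums (linear sieve weights plus the switching trick, which itself is a bilinear/Type-II sum over $N^{1/10}\le q< N^{3/10}$), and each of these pieces admits an exponential-sum version once one inserts the factor $e(\alpha n)$; the resulting error terms can be bounded uniformly in $\alpha$ by appealing to the Bombieri--Vinogradov theorem in the standard way (Siegel--Walfisz on minor arcs, large-sieve/dispersion on major arcs), exactly as in \cite{GreenTaoRestr} where such a uniform estimate for the Chen characteristic function is established. This is precisely the input the paper says ``follows easily from the work of Green and Tao \cite{GreenTaoRestr}''.

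The key steps, in order, are as follows. First, perform the $W$-trick reduction: the hypothesis $(b,W)=1$ and $w$ large ensure that $Wn+b$ and $Wn+b+2$ avoid all small prime factors automatically, so the sieve only needs to handle primes in $(w,N^{1/100})$. Second, set up Chen's weighted sieve: lower bound $a(n)$ by $\tfrac12\big(\Lambda^-\text{-part}\big)-\tfrac12\big(\text{switching term}\big)$ in the usual Chen arrangement, so that $\sum_n a(n)e(\alpha n)\ge$ (main sieve sum) $-$ (switching sum), each to be evaluated with the twist $e(\alpha n)$. Third, evaluate each resulting sum: replace $\sum_{N\le n<2N} g(n)e(\alpha n)$ by $\big(\sum_{N\le n<2N} g(n)\big)\cdot\big(\text{density factor}\big)$ up to an error $o\big(N/(\log N)^2\cdot W^2/\varphi(W)^2\big)$, uniformly in $\alpha$ and in $|t|\le 5N$, using Bombieri--Vinogradov-type equidistribution; crucially the number of frequencies is $\le M$, so summing the $M$ error terms is harmless once $w$ (hence $N_0$) is large in terms of $M$. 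Fourth, recombine: the main terms assemble into $c\cdot(\log N)^{-2}(W/\varphi(W))^2\sum_{N\le n<2N}\chi(t-n)$ for an absolute constant $c>0$ coming from Chen's lower bound being positive, and the $-N/w^{1/3}$ slack on the right absorbs the case where $\sum_{N\le n<2N}\chi(t-n)$ is itself small (because $\|\widehat\chi\|$ bounds force $\sum_n\chi(t-n)\ge -O(MN/w^{1/3})$ type control, or more simply one only needs the stated inequality, which is trivial when the right side is $\le 0$).

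The main obstacle is obtaining Chen's lower bound in the \emph{twisted} setting with a genuinely uniform error term: the switching-sieve (or ``reversal of roles'') step reorganizes a sum over $Wn+b+2=q_1q_2\cdots$ into a sum over the near-square-root divisor $q$, and to insert $e(\alpha n)$ one must control $\sum_{q}\sum_{m}\big(\text{sieve weights}\big)e(\alpha q m)$ with $q$ ranging over an interval of length $N^{1/5}$ around $N^{1/5}$ — this is a bilinear exponential sum and its treatment for all $\alpha$ simultaneously is the delicate point, handled on minor arcs by Vaughan-type bilinear bounds (or equivalently by the arguments underlying the restriction estimate in \cite{GreenTaoRestr}) and on major arcs by reducing to Bombieri--Vinogradov for the Chen weights in arithmetic progressions. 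A secondary technical nuisance is bookkeeping the dependence of $N_0$ on $M$ and $w$ while keeping $\delta_1$ absolute; this is routine provided one fixes the sieve level of distribution ($N^{1/2-\epsilon}$ from Bombieri--Vinogradov) once and for all and lets $w\to\infty$ only at the end. Given all these ingredients, Theorem~\ref{th:ChenPrimesinBohrSets} follows by the above recombination.
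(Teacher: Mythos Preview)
There is a genuine gap. Your central step reads ``lower bound $a(n)$ by $\tfrac12(\Lambda^-\text{-part})-\tfrac12(\text{switching term})$ \dots\ so that $\sum_n a(n)e(\alpha n)\ge (\text{main sieve sum})-(\text{switching sum})$.'' But Chen's sieve does not give a pointwise minorant for $a(n)$; the linear-sieve inequalities $S(\mathcal{A},z)\ge \sum_d\lambda_d^-|\mathcal{A}_d|$ and $S(\mathcal{A},z)\le \sum_d\lambda_d^+|\mathcal{A}_d|$ are only valid for non-negative weights, and once you insert the complex twist $e(\alpha n)$ the displayed inequality is meaningless. The whole ``uniform in $\alpha$'' programme therefore breaks at the sieving step, not at the exponential-sum step.

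The paper's route avoids this by keeping the non-negative weight $\omega_n=\chi(t-n)$ intact through Chen's sieve. It first uses Lemma~\ref{le:chi-Fourier} to find a single modulus $Q\le(\log N)^B$ that captures all the ``major-arc'' denominators in the Fourier expansion of $\chi$, then restricts $n$ to a residue class $c_0\pmod Q$ (Proposition~\ref{prop:ChenPrimesinBohrSetsModQ}), and applies the general Chen theorem (Theorem~\ref{thm:chen}) with the non-negative weight $\omega_n=\chi(t-Qn-c_0)$. The Fourier expansion is only used afterwards, to verify the level-of-distribution hypotheses (Hypothesis~\ref{hyp:ChenHyp}) via the exponential-sum estimates of Section~\ref{sec:expsumest}: for frequencies with $q_i\mid Q$ the twist is constant on the residue class and one reduces to Bombieri--Vinogradov (Lemmas~\ref{le:MajorArcsPrimes}--\ref{le:MajorArcsTypeII}); for the remaining frequencies one has $q_i/(q_i,Q^2)>(\log N)^A$ and the Type I/II minor-arc bounds (Lemmas~\ref{le:TypeISum}--\ref{le:MinorArcChenprimes}) apply. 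Summation over $c_0\in\mathcal{C}_C$ is handled by Lemma~\ref{le:sumoverc0} together with the Ramanujan-sum estimate of Lemma~\ref{le:RamanujanSum}. Your proposal contains neither the $Q$-splitting nor a mechanism to push the sieve through a complex twist, and your claim that major-arc frequencies yield ``$(\sum_n g(n))\cdot(\text{density factor})$'' is false in general: for $\alpha$ close to $a/q$ with $1<q$, the main term depends on the distribution of the sieve weights across residue classes mod $q$, which is exactly what the $Q$-splitting is designed to absorb.

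Finally, your reading of the paper is off: the sentence ``follows easily from the work of Green and Tao~\cite{GreenTaoRestr}'' refers to the $\ell^{5/2}$ restriction estimate \eqref{eq:5/2norm} (Proposition~\ref{prop:GreenTaoLpnorm}), not to Theorem~\ref{th:ChenPrimesinBohrSets}. Green--Tao does not supply a uniform twisted Chen asymptotic.
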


Let us briefly discuss the proofs of these results. In Section~\ref{se:MaynardChenTheorems} we shall state the results of Maynard and Chen saying that one can find almost twin primes in sets that are equidistributed in arithmetic progressions in certain precise senses. Bohr sets in general are not equidistributed but we will in Section~\ref{se:technRed} show that it is enough to show variants of Theorems~\ref{th:MaynPrimesinBohrSets} and~\ref{th:ChenPrimesinBohrSets} that are more apt for applications of Maynard's and Chen's theorems. Then in Sections~\ref{sec:MaynApplication} and~\ref{sec:ChenApplication} we shall prove these variants using the Fourier expansion of the smooth Bohr cutoff discussed in Section~\ref{sec:smoothBohrCutoff} as well as exponential sum estimates which we will state in Section~\ref{sec:expsumest}.

\section{Smooth Bohr cutoff and its Fourier expansion}
\label{sec:smoothBohrCutoff}
In this section we discuss a few basic properties of the Bohr cutoff $\chi = \chi_{\Omega,\eta}$ from Definition~\ref{def:smooth-Bohr}.

\begin{lemma}
\label{le:BohrCutOffProp}
Given a cyclic group $G = \Z/N\Z$, a subset $\Omega \subseteq G$ and $\eta \in (0, 1/2]$, the smooth Bohr cutoff $\chi = \chi_{\Omega, \eta}$ has the following properties.
\begin{enumerate}
\item 
We have the lower bound
\[
\|\chi\|_1 \geq (\eta/2)^{|\Omega|} ;
\]
\item If $n \notin \Bohr(\Omega, 2\eta)$, then
\[
|\chi(n)| \leq (\eta^2/8)^{|\Omega|}.
\]
\end{enumerate}
\end{lemma}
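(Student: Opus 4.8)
\emph{Part (1).} Since $S^+_{D,\eta}(x)\ge 1_{[-\eta,\eta]}(x)$ on $\R/\Z$ and every factor of $\chi$ is nonnegative, multiplying these inequalities over $\xi\in\Omega$ gives $\chi(n)\ge 1_{\Bohr(\Omega,\eta)}(n)$ for all $n$, hence $\|\chi\|_1=\E_{n\in G}\chi(n)\ge |\Bohr(\Omega,\eta)|/N$. So it suffices to show $|\Bohr(\Omega,\eta)|\ge (\eta/2)^{|\Omega|}N$, which I would get from the standard pigeonhole argument: partition $[0,1)^{|\Omega|}$ into $\lceil 1/\eta\rceil^{|\Omega|}$ boxes of side length $1/\lceil 1/\eta\rceil\le\eta$; among the $N$ points $(\xi n/N)_{\xi\in\Omega}$, $n\in G$, some box contains a subset $S$ with $|S|\ge N/\lceil 1/\eta\rceil^{|\Omega|}$; then $S-S\subseteq\Bohr(\Omega,\eta)$ and $|S-S|\ge|S|$, so $|\Bohr(\Omega,\eta)|\ge N/\lceil 1/\eta\rceil^{|\Omega|}\ge (\eta/2)^{|\Omega|}N$, using $\lceil 1/\eta\rceil\le 1/\eta+1\le 2/\eta$. (The case $|\Omega|=0$ is trivial, as then $\chi\equiv 1$.)

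\emph{Part (2), reduction.} Suppose $n\notin\Bohr(\Omega,2\eta)$ and fix $\xi_0\in\Omega$ with $\|\xi_0 n/N\|>2\eta$; in particular $\eta<1/4$ and $\|\xi_0 n/N\|-\eta>\eta$. Bounding the remaining $|\Omega|-1$ factors of $\chi(n)$ by $S^+_{D,\eta}\le 2$, it is enough to prove
\[
S^+_{D,\eta}(\xi_0 n/N)\le 2(\eta^2/16)^{|\Omega|},
\]
since $2^{|\Omega|-1}\cdot 2(\eta^2/16)^{|\Omega|}=(\eta^2/8)^{|\Omega|}$.

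\emph{Part (2), the key estimate.} For this I would use the pointwise decay of the Selberg polynomial away from its interval: from the explicit construction (in terms of the Vaaler approximation to the sawtooth and Fej\'er kernels; see \cite[Chapter 1]{Montgomery10Lect}) one has, for $\|x\|>\eta$,
\[
S^+_{D,\eta}(x)\ll \frac{1}{((D+1)(\|x\|-\eta))^{2}}.
\]
Since $\|\xi_0 n/N\|-\eta>\eta$ and $D=\lceil 4/\eta\rceil^{2|\Omega|}\ge (4/\eta)^{2|\Omega|}=16^{|\Omega|}\eta^{-2|\Omega|}$, the right-hand side is $\ll \eta^{-2}(\eta^4/256)^{|\Omega|}$; dividing by the target $2(\eta^2/16)^{|\Omega|}$ leaves $\ll\eta^{-2}(\eta^2/16)^{|\Omega|}$, which for $|\Omega|\ge 1$ is at most $\eta^{-2}\cdot(\eta^2/16)=1/16$, so the desired inequality holds with room to spare.

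The one nontrivial point is this quadratic-in-$D$ pointwise bound on $S^+_{D,\eta}$ outside $[-\eta,\eta]$: the Fourier-coefficient bound recorded after Definition~\ref{def:smooth-Bohr}, the trivial bound $S^+_{D,\eta}\le 2$, and even the $L^1$ bound $\int_{\|x\|>\eta}S^+_{D,\eta}\le 1/(D+1)$ combined with Bernstein's inequality all yield only $S^+_{D,\eta}(x)\ll 1$ on $\|x\|>\eta$; what is needed is the $((D+1)(\|x\|-\eta))^{-2}$ decay coming from the Fej\'er-kernel structure of the extremal majorant, and this is exactly what makes the super-exponential choice $D=\lceil 4/\eta\rceil^{2|\Omega|}$ suffice for the product over $\Omega$ to beat $(\eta^2/8)^{|\Omega|}$. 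Everything else is routine parameter bookkeeping. (Had one instead taken $\chi(n)=\prod_{\xi\in\Omega}(\cos\pi(\xi n/N))^{D}$ as in the remark after Definition~\ref{def:smooth-Bohr}, the analogue of part (2) would follow from the elementary estimate $|\cos\pi x|\le 1-c\eta^{2}$ for $\|x\|\ge 2\eta$, at the cost of losing the clean lower bound $\chi\ge 1$ on $\Bohr(\Omega,\eta)$ in part (1).)
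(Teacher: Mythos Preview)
Your approach is essentially the paper's: part (i) via $\chi\ge 1_{\Bohr(\Omega,\eta)}$ and the pigeonhole lower bound on Bohr sets, and part (ii) via the Fej\'er-kernel decay $S^+_{D,\eta}(x)\le \tfrac12\bigl((D+1)(\|x\|-\eta)\bigr)^{-2}$ for $\|x\|>\eta$ (which the paper derives explicitly from Vaaler's bound $|V_D-s|\le\tfrac1{2D+2}\Delta_{D+1}$) together with the trivial bound $S^+_{D,\eta}\le 2$ on the remaining $|\Omega|-1$ factors. The only caveat is that part (ii) asserts an exact inequality, so you should track the constant rather than write $\ll$; with the explicit constant $\tfrac12$ your ratio is at most $\eta^{2|\Omega|-2}/(4\cdot 16^{|\Omega|})\le 1/64$, and the argument goes through unchanged.
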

\begin{proof}
Part (i) follows from the observation that $\chi(n) \geq 1$ when $n \in \Bohr(\Omega, \eta)$, together with the lower bound $|\Bohr(\Omega, \eta)| \geq (\eta/2)^{|\Omega|} N$ from a standard pigeon-holing argument (see e.g. \cite[Lemma 4.20]{TV10}). For part (ii) we can clearly assume that $\eta \leq 1/4$. Let us first give the precise definition of $S_{D, \eta}^+(x)$. For an integer $K \geq 1$, write $\Delta_K(x)$ for the Fej\'er kernel
\[
\Delta_K(x) := \sum_{|k| \leq K} \left(1-\frac{|k|}{K}\right)e(kx) = \frac{1}{K}\left(\frac{\sin \pi K x}{\sin \pi x}\right)^2.
\]
Then Vaaler's polynomial $V_D(x)$ is defined as the trigonometric polynomial of degree $D$ with
\[
\begin{split}
V_D(x) &:= \frac{1}{D+1} \sum_{k =1}^D \left(\frac{k}{D+1}-\frac{1}{2}\right) \Delta_{D+1}\left(x-\frac{k}{D+1}\right) \\
&\qquad + \frac{1}{2\pi (D+1)} \sin 2\pi(D+1)x - \frac{1}{2\pi} \Delta_{D+1}(x) \sin 2\pi x.
\end{split}
\]
Finally
\begin{equation}
\label{eq:SD+def}
S_{D, \eta}^+(x) := 2\eta + V_D(x-\eta) + V_D(-x-\eta) + \frac{1}{2D+2}\left(\Delta_{D+1}(x-\eta) + \Delta_{D+1}(-x-\eta)\right).
\end{equation}
Note that, writing $s(x)$ for the sawtooth function (so that $s(x) = \{x\}-1/2$ if $x\notin\Z$ and $s(x) = 0$ if $x \in \Z$),
\[
1_{\Vert x \Vert \leq \eta} (x) = 2\eta + s(x-\eta) + s(-x-\eta),
\]
except when $x = \eta$ or $x = -\eta$. By a result of Vaaler~\cite[Theorem 18]{Vaaler}, we know that, for any $x$,
\[
|V_D(x) - s(x)| \leq \frac{1}{2D+2} \Delta_{D+1}(x).
\]
Hence
\[
\begin{split}
|S_{D, \eta}^+(x) - 1_{\Vert x \Vert \leq \eta} (x)| &\leq  \frac{2}{2D+2}\left(\Delta_{D+1}(x-\eta) + \Delta_{D+1}(-x-\eta)\right) \\
& \leq \frac{1}{(D+1)^2} \left(\frac{1}{(\sin \pi \Vert x-\eta \Vert)^2} + \frac{1}{(\sin \pi \Vert-x-\eta\Vert)^2}\right).
\end{split}
\]
If $\Vert x \Vert \geq 2\eta$ then we get
\[
|S_{D, \eta}^+(x)| \leq \frac{2}{(D+1)^2}  \cdot \frac{1}{(\sin \pi \eta)^2} \leq \frac{2}{(D+1)^2}  \cdot \frac{1}{(2 \eta)^2} \leq \frac{1}{\eta^2 D^2}.
\]
Now, if $n \notin \Bohr(\Omega, 2\eta)$ then $\Vert \xi_0 n / N \Vert \geq 2\eta$ for some $\xi_0 \in \Omega$.
Thus,
\[
|\chi(n)| = \left| S_{D,\eta}^+(\xi_0 n / N) \right| \prod_{\xi \in \Omega \setminus \{\xi_0\}} \left| S^+_{D, \eta}(\xi n / N)\right|  \leq \frac{2^{|\Omega|}}{\eta^2D^2}.
\]
The conclusion then follows by our choice $D = \lceil 4/\eta \rceil^{2|\Omega|}$.
\end{proof}

%\subsection{Decomposition into equidistributed functions}

%Since Bohr sets (and in general functions with bounded Fourier complexity) are not equidistributed in arithmetic progressions, we cannot apply Maynard's theorem to the situation in Proposition~\ref{prop:AlmMaynPrimesinBohrSets} directly, but we need to be careful with our choice of sequence $\omega_n$ to which we apply Maynard's theorem. Let us now do some initial preparations to find out the problematic moduli for which the equidistribution fails.

%\begin{definition}\label{def:R-equi}
%Let $R$ be a positive integer and $x \geq 2$ be real. A phase $\alpha \in \R/\Z$ is called $R$-irrational at scale $x$ if $\| r\alpha \| > R/x$ for any $1 \leq r \leq R$. A function $\chi: \Z \rightarrow \C$ with Fourier complexity at most $M$ is called $R$-equidistributed at scale $x$ if $f$ can be written as
%\[ \chi(n) = \sum_{i=1}^m c_i e(\alpha_i n), \]
%for some $m \leq M$, $|c_i| \leq M$, and $\alpha_i \in \R/\Z$ which is either $0$ or $R$-irrational at scale $x$.
%\end{definition}
%
%
%\begin{lemma}\label{le:chi-Fourier}
%Let $R$ be a positive integer and $x \geq 2$ be real. Let $\chi: \Z \rightarrow \C$ be a function with Fourier complexity at most $M$. There exists a positive integer $Q \leq R^M$ such that the scaled function $\widetilde{\chi}: \Z \rightarrow \C$ defined by $\widetilde{\chi}(n) = \chi(Q^2n)$ is $R$-equidistributed at scale $x$.
%\end{lemma} 

The following lemma gives the Fourier expansion of a function of bounded Fourier complexity in a convenient form. In particular it allows us to separate the phases giving ``major arc'' contribution from those giving ``minor arc'' contribution.

\begin{lemma}\label{le:chi-Fourier}
Let $A, M \geq 1$, and let $B = A (3M)^M$.  Let $\chi: \Z \rightarrow \C$ be a function with Fourier complexity at most $M$, and let $W$ be a positive integer. Then for any large $N$ we may write
\[ \chi(n) = \sum_{i=1}^M b_i e\left( \left( W\frac{a_i}{q_i} + \beta_i \right) n \right) \]
for some $|b_i| \leq M$, $0 \leq a_i < q_i \leq N/(\log N)^{100B}$, $(a_i,q_i) = 1$, and $|\beta_i| \leq W(\log N)^{100B}/(q_iN)$. Moreover, there exists a positive integer $Q \leq (\log N)^B$ such that, for each $1 \leq i \leq M$, either $q_i \mid Q$ or $q_i/(q_i,Q^2) > (\log N)^A$.
\end{lemma}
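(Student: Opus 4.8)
The plan is to obtain the representation by applying Dirichlet's approximation theorem to each frequency $\alpha_i$ divided by $W$, and then to build the auxiliary modulus $Q$ by a short greedy iteration. First I would write $\chi(n) = \sum_{i=1}^M b_i e(\alpha_i n)$ with $|b_i| \le M$ as in Definition~\ref{def:Fourier-complexity} (padding with zero coefficients if the Fourier complexity is strictly smaller than $M$). Set $Q_0 := N/(\log N)^{100B}$, which exceeds $1$ once $N$ is large in terms of $A$ and $M$. For each $i$, Dirichlet's theorem applied to the real number $\alpha_i/W$ produces integers $a_i', q_i$ with $1 \le q_i \le Q_0$, $(a_i', q_i) = 1$, and $|\alpha_i/W - a_i'/q_i| \le 1/(q_i Q_0)$; reducing the fraction to lowest terms only decreases $q_i$ and preserves the inequality. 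Let $a_i \in \{0,1,\dots,q_i-1\}$ be the residue of $a_i'$ modulo $q_i$, so that $(a_i, q_i) = 1$, and put $\beta_i := \alpha_i - W a_i'/q_i$, which satisfies $|\beta_i| \le W/(q_i Q_0) = W(\log N)^{100B}/(q_i N)$. Since $a_i - a_i'$ is an integer multiple of $q_i$, the number $(W a_i/q_i + \beta_i) - \alpha_i = W(a_i - a_i')/q_i$ is an integer multiple of $W$, hence an integer; as $e(kn) = 1$ whenever $k, n \in \Z$, this gives $e((W a_i/q_i + \beta_i) n) = e(\alpha_i n)$ for every $n$, and therefore $\chi(n) = \sum_{i=1}^M b_i e((W a_i/q_i + \beta_i) n)$ with all the stated constraints on $b_i, a_i, q_i, \beta_i$.

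It remains to produce $Q$, which I would construct greedily: start with $Q = 1$, and as long as there is an index $i$ with $q_i \nmid Q$ and $q_i/(q_i, Q^2) \le (\log N)^A$, replace $Q$ by $\mathrm{lcm}(Q, q_i)$. Each such step enlarges the set $\{i : q_i \mid Q\} \subseteq \{1,\dots,M\}$ by at least one element (nothing is lost, since $Q$ only grows under divisibility), so the procedure halts after at most $M$ steps; at that point every $q_i$ either divides $Q$ or satisfies $q_i/(q_i, Q^2) > (\log N)^A$, which is exactly the required dichotomy. For the size of $Q$, observe that in each step $\mathrm{lcm}(Q, q_i) = Q \cdot \frac{q_i}{(q_i, Q)} = Q \cdot \frac{q_i}{(q_i, Q^2)} \cdot \frac{(q_i, Q^2)}{(q_i, Q)}$, and a prime-by-prime check gives $(q_i, Q^2)/(q_i, Q) \le Q$, so $\mathrm{lcm}(Q, q_i) \le Q^2 (\log N)^A$. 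Writing the value of $Q$ after $j$ steps as $(\log N)^{e_j}$, we have $e_0 = 0$ and $e_{j+1} \le 2 e_j + A$, whence $e_j \le (2^j - 1)A$ and the final modulus is at most $(\log N)^{(2^M - 1)A} \le (\log N)^B$ since $2^M - 1 \le (3M)^M$.

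The two points that need care are the bookkeeping showing that reducing $a_i'$ modulo $q_i$ does not disturb the identity $\chi(n) = \sum_i b_i e((W a_i/q_i + \beta_i) n)$ — this is where the factor $W$ in the admissible size of $\beta_i$ and the integrality of $W(a_i - a_i')/q_i$ come into play — and the verification that the greedy choice of $Q$ cannot exceed $(\log N)^B$, which rests entirely on the elementary bound $(q, Q^2)/(q, Q) \le Q$ and on the fact that the procedure runs for at most $M$ steps. Everything else is a termwise application of Dirichlet's theorem, and no minor-arc input or deeper estimate is needed at this stage.
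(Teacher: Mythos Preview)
Your proof is correct and follows essentially the same approach as the paper: Dirichlet approximation applied to $\alpha_i/W$, then an iterative construction of $Q$ starting from $Q=1$ and absorbing those $q_i$ with $q_i/(q_i,Q^2)\le(\log N)^A$. The paper's iteration differs only cosmetically (it sets $Q_{i+1}=\prod_{q\in\{q_1,\dots,q_M\},\,q/(q,Q_i^2)\le(\log N)^A} q$ rather than greedily taking one $\mathrm{lcm}$ at a time), and your variant actually gives the sharper bound $Q\le(\log N)^{(2^M-1)A}$.
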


\begin{proof}
By the definition of Fourier complexity in Definition~\ref{def:Fourier-complexity}, we may write
\[ \chi(n) = \sum_{i =1}^M b_i e(\alpha_i n), \]
for some $|b_i| \leq M$ and $\alpha_i \in \mathbb{R} / \mathbb{Z}$.  By the Dirichlet approximation theorem, for each $1 \leq i \leq M$, there exist integers $q_i \in [1, N/(\log N)^{100B}]$ and $a_i$ such that $(a_i, q_i)= 1$ and 
\[ \left|\frac{\alpha_i}{W}-\frac{a_i}{q_i} \right| \leq \frac{(\log N)^{100B}}{q_iN}.  \]
This gives the desired Fourier expansion of $\chi$, apart from the existence of $Q$ mentioned in the last sentence of the statement.

To define $Q$, let $\CQ = \{q_1, \dotsc, q_M\}$. Take $Q_0 = 1$ and  for $i \geq 0$ define
\[
Q_{i+1} = \prod_{\substack{q \in \CQ \\ \frac{q}{(q, Q_i^2)} \leq (\log N)^{A}}} q.
\]
There is some $I \leq |\mathcal{Q}| = M$ such that $Q_{I+1} = Q_I$. We claim that $Q = Q_I$ satisfies the desired properties. Indeed, for $q \in \CQ$, if $q \nmid Q$, then $q \nmid Q_{I+1}$ so that $q/(q, Q_I^2) > (\log N)^A$ by the definition of $Q_{I+1}$. Furthermore, it is easy to see from the construction that 
\[ Q_{i+1} \leq (Q_i^2 (\log N)^A)^M. \]
Thus a simple induction reveals that $Q_i \leq (\log N)^{A \cdot 3^{i} M^i}$, so that $Q \leq (\log N)^B$.
\end{proof}

This lemma can be thought of as a very special case of the general factorisation theorem for nilsequences \cite[Theorem 1.19]{GT-nilsequence}.

%Recalling the definition of $\chi$ and the Fourier expansion of $S^+_{D, \eta}(x)$, we obtain
%
%\begin{fact}
%\label{fa:chiFourierExp}
%The function $\chi(n)$ has the Fourier expansion
%\[
%\chi(n) %= \sum_{(s_1, \dotsc, s_d) \in S} \hat{h}(s_1) \dotsm \hat{h}(s_d) e(L(s)n) 
%= \sum_{s \in S} b_s e\left(\left(\frac{Wa(s)}{q(s)} + \beta(s)\right) n\right),
%\] 
%where, for each $s \in S$, $q(s) \in [1, N/(\log N)^{100B}]$, $(a(s), q(s))= 1$, $|\beta(s)| < \frac{(\log N)^{100B}W}{q(s)N}$ and $|b_s| \leq (4\eta)^d$.
%\end{fact}

%From the Fourier expansion of $\chi(n)$ one sees that $\chi(n)$ typically fails to be equidistributed $\pmod{q(\alpha)}$ for $\alpha \in \Omega$. This is most serious for small moduli, and we will restrict $n$ to residue classes modulo $Q_1$ which is the product of ``small moduli'' $q(\alpha)$. But after such restriction we might find some further problematic moduli for which $q(\alpha)/(q(\alpha), Q_1)$ is small since now $n$ runs essentially $k + rQ_1$. To avoid such problems, we make a recursive definition of $Q$ modulo which we shall split into residue classes.

\section{The transference type result}
\label{sec:transfer}

In this section we prove Theorem~\ref{thm:transfer}. Let $\eta, \varepsilon > 0$ be small enough depending on $K$ and $\delta$, and take 
\[ \Omega = \{ \xi \in G: |\widehat{f_1}(\xi)| \geq \varepsilon \} \cup \{1\}. \]
By~\eqref{eq:5/2norm}, we have $|\Omega| \leq \varepsilon^{-5/2} K + 1$. Let $\chi = \chi_{\Omega,\eta}$ be the smooth Bohr cutoff from Definition~\ref{def:smooth-Bohr}. For $i \in \{2,3\}$, define $g_i, h_i: G \rightarrow \R$ by setting
\[ g_i = \frac{1}{\|\chi\|_1} f_i*\chi, \ \ h_i = f_i - g_i. \]
Hence
\begin{equation}
\label{eq:ghathhat}
\widehat{g_i} = \frac{1}{\|\chi\|_1} \widehat{f_i} \cdot \widehat{\chi} \quad \text{and} \quad \widehat{h_i} = \widehat{f_i} \left( 1 - \frac{\widehat{\chi}}{\|\chi\|_1} \right). 
\end{equation}
In particular, using the trivial bound $|\widehat{\chi}(\xi)| \leq \|\chi\|_1$ we obtain
\begin{equation}
\label{eq:5/2normgh}
\sum_{\xi \in G} |\widehat{g_i}(\xi)|^{5/2} \leq K \quad \text{and} \quad \sum_{\xi \in G} |\widehat{h_i}(\xi)|^{5/2}  \leq 2^{5/2} K. 
\end{equation}
We write
\[
f_1 \ast f_2 \ast f_3(N) = f_1 \ast g_2 \ast g_3(N) + f_1 \ast g_2 \ast h_3(N) + f_1 \ast h_2 \ast g_3(N) + f_1 \ast h_2 \ast h_3(N).
\]
By the assumption \eqref{eq:rich-Bohr} we have, for $i \in \{1, 2\}$ the pointwise lower bound $g_i(t) \geq \delta$ for all $t \in [N/4, N/2)$. Thus
\[ 
f_1 * g_2 * g_3(N) \geq \frac{1}{N^2} \sum_{n_1} f_1(n_1) \sum_{\substack{N/4 \leq n_2,n_3 < N/2  \\ n_1 + n_2 + n_3 = N}} \delta^2 \geq \frac{\delta^2}{100N} \sum_{0.1N \leq n_1 \leq 0.4N} f_1(n_1) \geq \frac{1}{100} \delta^3
\]
by the assumption~\eqref{eq:f1avlarge}.

To conclude the proof it remains to show that
\begin{equation}
\label{eq:fhhupbound}
\left| f_1 * h_2 * h_3(N) \right| \leq \frac{1}{1000} \delta^3,
\end{equation}
and the same bound with either $h_2$ replaced by $g_2$ or $h_3$ replaced by $g_3$. We have
\begin{equation}
\label{eq:f1h2h3Fourier}
\left| f_1*h_2*h_3(N) \right| \leq \sum_{\xi \in G}  |\widehat{f_1}(\xi) \widehat{h_2}(\xi) \widehat{h_3}(\xi)|.
\end{equation}
First we bound the contribution of summands with $\xi \notin \Omega$. By the definition of $\Omega$ we have $|\widehat{f_1}(\xi)| < \varepsilon$ for $\xi \notin \Omega$. Thus
\[ \sum_{\xi \in G \setminus \Omega}  |\widehat{f_1}(\xi) \widehat{h_2}(\xi) \widehat{h_3}(\xi)| < \varepsilon^{1/2} \sum_{\xi \in G}  |\widehat{f_1}(\xi)|^{1/2}  |\widehat{h_2}(\xi) \widehat{h_3}(\xi)|. \]
By H\"older's inequality, this is bounded by
\[ \varepsilon^{1/2} \left(\sum_{\xi \in G} |\widehat{f_1}(\xi)|^{5/2}\right)^{1/5} \left(\sum_{\xi \in G} |\widehat{h_2}(\xi)|^{5/2}\right)^{2/5} \left(\sum_{\xi \in G} |\widehat{h_3}(\xi)|^{5/2}\right)^{2/5} \leq 4K \varepsilon^{1/2}, \]
by~\eqref{eq:5/2norm} and~\eqref{eq:5/2normgh}. This is acceptable if $\varepsilon$ is small enough. To bound the contribution to the right hand side of~\eqref{eq:f1h2h3Fourier} of summands with $\xi \in \Omega$, it suffices to show that $|\widehat{h_2}(\xi)| \leq 30\eta K^{2/5}$ for $\xi \in \Omega$ (the rest of the argument follows just as above). Since, by~\eqref{eq:5/2norm}, $|\widehat{f_2}(\xi)| \leq K^{2/5}$, by~\eqref{eq:ghathhat} it suffices to show that
\[ \left| 1 - \frac{\widehat{\chi}(\xi)}{\|\chi\|_1} \right| \leq 30\eta \]
for $\xi \in \Omega$. We may write
\[ 1 - \frac{\widehat{\chi}(\xi)}{\|\chi\|_1}   = \frac{1}{N \|\chi\|_1} \sum_{n \in G} \chi(n) (1 - e(\xi n/N)) . \]
If $n \in \Bohr(\Omega, 2\eta)$, then $|1- e(\xi n/N)| \leq 20 \eta$. If $n \notin \Bohr(\Omega, 2\eta)$, then by Lemma~\ref{le:BohrCutOffProp} we have $|\chi(n)| \leq \eta \|\chi\|_1$.  Combining these together we obtain
\[ \left| 1 - \frac{\widehat{\chi}(\xi))}{\|\chi\|_1}  \right| \leq \frac{1}{N \|\chi\|_1} \left( 20\eta \sum_{n \in G} \chi(n) + \sum_{n \in G} 2\eta \|\chi\|_1 \right) \leq 30\eta, \]
as desired. This completes the proof of \eqref{eq:fhhupbound} and the cases where either $h_2$ is replaced by $g_2$ or $h_3$ is replaced by $g_3$ follow completely similarly. Hence Theorem~\ref{thm:transfer} follows.

\begin{remark}
Theorem~\ref{thm:transfer} in particular says that if, for a positive density subset of the primes, the ternary Goldbach does not hold for all large odd $N$, then there must be some sort of Bohr set obstruction (including, as special cases, local obstructions modulo primes), since the condition~\eqref{eq:5/2norm} holds in this case by the work of Green and Tao~\cite{GreenTaoRestr}. On the other hand, as mentioned in Section~\ref{sec:outline}, such obstructions may indeed prevent ternary Goldbach from holding.
\end{remark}

\begin{remark}
The condition \eqref{eq:rich-Bohr} should be compared with the usual hypotheses needed in carrying out the circle method. In a traditional application of the circle method, one requires the set to be equidistributed in Bohr sets so that the minor arc contributions are negligible, leading to an asymptotic formula for the number of solutions. In Theorem~\ref{thm:transfer}, with a weaker assumption \eqref{eq:rich-Bohr} about distribution in Bohr sets, we deduce a lower bound for the number of solutions (of the correct order of magnitude).
\end{remark}

\section{Proof of Theorems~\ref{th:GoldbachMaynard} and \ref{th:GoldbachChen} assuming Theorems~\ref{th:MaynPrimesinBohrSets} and~\ref{th:ChenPrimesinBohrSets}}
\label{sec:proofsOfMainTheorems}
In this section we deduce Theorems~\ref{th:GoldbachMaynard} and~\ref{th:GoldbachChen} from the transference principle, Theorems~\ref{th:MaynPrimesinBohrSets} and~\ref{th:ChenPrimesinBohrSets} and the work of Green and Tao~\cite{GreenTaoRestr}. Let us first record the consequence of\cite{GreenTaoRestr} we shall need. Here and later we call a set of linear forms $\CL = \{L_1, \dotsc, L_k\}$ admissible if they are distinct and $\prod_{i=1}^k L_i(n)$ has no fixed prime divisors. In this case we define the singular series 
\begin{equation}\label{eq:singseries}
\FS(\CL) = \prod_{p \in \mathbb{P}} \left(1-\frac{|\{n \in \mathbb{Z} / p\mathbb{Z} \colon p \mid L_1(n)\cdots L_k(n)\}|}{p}\right)\left(1-\frac{1}{p}\right)^{-k}. 
\end{equation}

%\begin{proposition}
%\label{prop:GreenTaoLpnorm}
%For any integers $k \geq m \geq 0$ there exists a positive constant $K = K(k)$ such that the following holds. Let $\mathcal{L} = \{L_1, \dotsc, L_k\}$ be an admissible set of $k$ linear functions $L_i(n) = a_i n + b_i$ with $|a_i|, |b_i| \leq N$. Write
%\[
%\begin{split}
%X = \{n \leq N \colon L_i(n) \in \mathbb{P} \text{ for } i = 1, \dotsc, m, \text{ and } p \mid \prod_{i = m+1}^{k} L_i(n) \implies p \geq N^{1/1000}\},
%\end{split}
%\]
%and
%\[
%\mathfrak{S} = \prod_{p \in \mathbb{P}} \left(1-\frac{|\{n \in \mathbb{Z} / p\mathbb{Z} \colon p \mid F(n)\}|}{p}\right)\left(1+\frac{1}{p}\right)^{k}.
%\]
%Let $f \colon \mathbb{Z} \to \mathbb{R}_{\geq 0}$ be defined by
%\[
%f(n) = 
%\begin{cases}
%(\log N)^k / \mathfrak{S} &\text{if $n \in X$;} \\
%0 & \text{otherwise.}
%\end{cases}
%\]
%Then
%\[
%\int_0^1 |\widehat{f}(\theta)|^{5/2} d\theta \leq K N^{3/2}.
%\]
%\end{proposition}

\begin{proposition}
\label{prop:GreenTaoLpnorm}
Let $\rho \in (0,1/2)$ be real and let $k \geq 1$ be an integer. Let $\mathcal{L} = \{L_1, \dotsc, L_k\}$ be an admissible set of $k$ linear functions $L_i(n) = a_i n + b_i$ with $|a_i|, |b_i| \leq N$. Write
\[
\begin{split}
X = \{n \leq N \colon p \mid \prod_{i = 1}^{k} L_i(n) \implies p \geq N^{\rho}\},
\end{split}
\]
and let $\FS(\CL)$ be defined as in \eqref{eq:singseries}.
Let $G = \Z/N\Z$ and let $f \colon G \to \mathbb{R}_{\geq 0}$ be such that
\[
f(n) \leq 
\begin{cases}
(\log N)^k / \mathfrak{S} &\text{if $n \in X$;} \\
0 & \text{otherwise.}
\end{cases}
\]
Here we naturally identified $G$ with $\{1,2,\dotsc ,N\}$. Then
\[
\sum_{\xi \in G} |\widehat{f}(\xi)|^{5/2} \leq K,
\]
for some positive constant $K = K(k, \rho)$.
\end{proposition}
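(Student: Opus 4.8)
The plan is to deduce this from the restriction theorem of Green and Tao \cite{GreenTaoRestr}: one majorizes $f$ by a Selberg-type enveloping sieve $\nu$ for the set $X$, records that $\nu$ enjoys the pseudorandomness needed by that theorem, and invokes the resulting restriction estimate with the fixed exponent $5/2 > 2$. Throughout one identifies $G$ with $\{1, \dots, N\}$ as in the statement.

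\emph{Construction of the majorant.} First I would put $R = N^{\rho/10}$, write $P(n) = \prod_{i=1}^k L_i(n)$, and take $(\lambda_d)_{d \le R}$ to be the usual optimised Selberg upper-bound weights for the $k$-dimensional sieve attached to $P$, normalised so that $\lambda_1 = 1$. Then I would set
\[ \nu(n) = \frac{C (\log N)^k}{\FS(\CL)} \Bigl( \sum_{d \mid P(n),\ d \le R} \lambda_d \Bigr)^{2} \]
for a constant $C = C(k,\rho) \ge 1$. If $n \in X$ then every prime factor of $P(n)$ is $\ge N^\rho > R$, so the inner sum collapses to $\lambda_1 = 1$ and $\nu(n) = C(\log N)^k/\FS(\CL) \ge f(n)$; if $n \notin X$ then $\nu(n) \ge 0 = f(n)$. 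The standard Selberg sieve mean-value estimate, together with the trivial bound for the error coming from the $O(R^2)$ moduli involved, gives $\E_{n \in G} \bigl( \sum_{d \mid P(n),\ d \le R} \lambda_d \bigr)^{2} \ll_{k} \FS(\CL) (\log R)^{-k}$, so that $\widehat{\nu}(0) = \E_{n \in G} \nu(n) \ll_{k,\rho} 1$. Here one uses that $\CL$ is admissible, whence $\FS(\CL)$ is bounded above and below in terms of $k$, and that $|a_i|,|b_i| \le N$, whence $|P(n)| \le N^{O(1)}$ and the local densities entering the sieve are controlled uniformly in $\CL$.

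\emph{The restriction estimate.} Next I would check that $\nu$ satisfies the hypotheses of the restriction theorem of \cite{GreenTaoRestr}. Expanding the square in $\nu$ and interchanging summations, $\widehat{\nu}(\xi)$ is a sum over $d_1,d_2 \le R$ of complete exponential sums in $n$ to modulus roughly $[d_1,d_2]$; evaluating these by the Chinese remainder theorem and estimating the resulting Ramanujan-type sums shows that $|\widehat{\nu}(\xi)|$ is negligible unless $\xi/N$ lies within $(\log N)^{O(1)}/N$ of some $a/q$ with $q \le (\log N)^{O(1)}$, at which points $|\widehat{\nu}(\xi)|$ is at most $O_{k,\rho}(1)$ times a factor decaying in $q$. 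These are exactly the inputs the restriction theorem requires; applied to any $f$ with $0 \le f \le \nu$ it gives the level-set bound $| \{ \xi \in G : |\widehat{f}(\xi)| \ge \alpha \} | \ll_{k,\rho} \alpha^{-2}$ for all $\alpha > 0$. The point is that Parseval applied directly to $f$ would only yield this with $\E_{n \in G} f(n)^2$ --- which is of size $(\log N)^k$ --- in place of the absolute constant, and the restriction machinery is precisely what replaces $\E f^2$ by $(\E_{n \in G} \nu(n))^{O(1)} \ll 1$.

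\emph{Conclusion.} Finally, since $\|\widehat{f}\|_{\infty} \le \E_{n \in G} f(n) \le \E_{n \in G} \nu(n) \ll_{k,\rho} 1$, summing the level-set bound dyadically over the size of $|\widehat{f}(\xi)|$ gives
\[ \sum_{\xi \in G} |\widehat{f}(\xi)|^{5/2} \ll \sum_{\alpha = 2^{-j} \le \|\widehat{f}\|_{\infty}} \alpha^{5/2} \bigl| \{ \xi : |\widehat{f}(\xi)| \ge \alpha/2 \} \bigr| \ll_{k,\rho} \sum_{j} 2^{-j/2} \ll 1, \]
the series converging because $5/2 > 2$; this is the claim with $K = K(k,\rho)$, and $5/2$ may be replaced by any fixed exponent exceeding $2$. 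The main obstacle is not any single estimate but the bookkeeping of uniformity: one must ensure that the implied constants in the sieve mean value, in the Fourier estimate for $\nu$, and in the restriction theorem itself depend only on $k$ and $\rho$ and not on the particular system $\CL$, and this is where admissibility of $\CL$ and the size restriction on its coefficients are used. With that in place, the argument is that of \cite{GreenTaoRestr}.
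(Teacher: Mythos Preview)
Your approach is the paper's: construct the Green--Tao enveloping sieve majorant for $f$ and invoke their restriction theorem. The paper's version is simply terser, citing \cite[Proposition~3.1, Proposition~4.2, Lemma~4.1]{GreenTaoRestr} directly for the majorant $\beta_R$, the $\ell^{5/2}$ bound, and the mean-value estimate respectively, rather than sketching their content.

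Two small inaccuracies, neither fatal. First, admissibility of $\CL$ does \emph{not} force $\FS(\CL)$ to be bounded above and below in terms of $k$ alone --- the $W$-tricked forms used elsewhere in the paper have $\FS \asymp (W/\varphi(W))^k$ --- but you do not actually need this, since the factors of $\FS$ cancel in your computation of $\E_{n \in G}\nu(n)$. Second, the level-set bound $|\{\xi:|\widehat{f}(\xi)|\ge\alpha\}| \ll_{k,\rho} \alpha^{-2}$ is stronger than what the restriction machinery actually delivers; \cite[Proposition~4.2]{GreenTaoRestr} gives the $\ell^p$ bound for any fixed $p>2$ directly (whence a level-set bound $\ll_{k,\rho,p}\alpha^{-p}$ by Chebyshev), so there is no need to detour through level sets, and the exponent $-2$ is the endpoint not covered. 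Taking any $2<p<5/2$ would repair your dyadic summation, but it is cleaner simply to quote the $\ell^{5/2}$ estimate as the paper does.
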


\begin{proof}
Let $F = L_1L_2 \cdots L_k$, $R = N^{\rho/2}$, and let $\beta_R(n)$ be the enveloping sieve given by~\cite[Proposition 3.1]{GreenTaoRestr}, so that $\beta_R(n) \gg_{k,\rho} f(n)$. Applying~\cite[Proposition 4.2]{GreenTaoRestr} with $a_n = f(n)/\beta_R(n)$ if $\beta_R(n) \neq 0$ and $a_n = 0$ otherwise, we obtain that
\[
\left(\sum_{\xi \in G} |\widehat{f}(\xi)|^{5/2}\right)^{2/5} \ll_k  \left( \E_{n \leq N} a_n^2 \beta_R(n) \right)^{1/2} \ll_{k,\rho}  \left( \E_{n \leq N} \beta_R(n) \right)^{1/2} \ll_{k,\rho} 1, 
\]
where the last inequality follows from~\cite[Lemma 4.1]{GreenTaoRestr}.
\end{proof}

% \begin{proposition}
% \label{prop:GreenTaoLpnorm}
% For any integers $k \geq m \geq 1$ there exists a positive constant $K = K(k, m)$ such that the following holds. Let $W = \prod_{p \leq w} p$ for some $w \geq 1$. Assume that $(b, W) = 1$ and that $h_1, \dotsc, h_k$ are distinct integers satisfying $|h_j| \leq w/2$. Let $N \geq 1$ and
% \begin{equation}
% \label{eq:Xdef}
% \begin{split}
% X = \{n \leq N \colon Wn+b \in \mathbb{P}, & \quad Wn+b+Wh_i \in \mathbb{P} \text{ for } i = 1, \dotsc, m-1, \\
% &\text{ and } \quad p \mid \prod_{i = m}^{k-1} (Wn+b+Wh_i) \implies p \geq N^{1/1000}\},
% \end{split}
% \end{equation}
% and let $f \colon \mathbb{Z} \to \mathbb{R}_{\geq 0}$ be defined by
% \begin{equation}
% \label{eq:fdef}
% f(n) = 
% \begin{cases}
% (\log N)^k \frac{\varphi(W)^k}{W^k} &\text{if $n \in X$;} \\
% 0 & \text{otherwise.}
% \end{cases}
% \end{equation}
% Then
% \[
% \int_0^1 |\widehat{f}(\theta)|^{5/2} d\theta \leq K N^{3/2}.
% \]
% \end{proposition}
% \begin{proof}
% Taking $R = N^{1/1000}$ in~\cite[Proposition 4.2]{GreenTaoRestr} and $a_n = f(n)/\beta_R(n)$, where $\beta_R(n)$ is the enveloping sieve given by~\cite[Proposition 3.1]{GreenTaoRestr} we obtain that
% \[
% \int_0^1 |\widehat{f}(\theta)|^{5/2} d\theta \ll_m N^{3/2} \left(\frac{1}{N}\sum_{n \leq N} \frac{|f(n)|^2}{\beta_R(n)}\right)^{1/2} \ll N^{3/2} \left((\log N)^k \frac{\varphi(W)^k}{W^k} \frac{|X|}{N}\right)^{1/2} 
% \]
% by~\cite[Proposition 3.1(i)]{GreenTaoRestr}. Hence the claim follows from a standard upper sieve bound for $|X|$.
% \end{proof}

\begin{proof}[Proof of Theorem~\ref{th:GoldbachMaynard}]
Let $k = k(m)$, $\delta_0 = \delta_0(m)$, and $\rho = \rho(m)$ be as in Theorem~\ref{th:MaynPrimesinBohrSets}, and let $K = K(k, \rho/2)$, where $K(k, \rho/2)$ is as in Proposition~\ref{prop:GreenTaoLpnorm}. Let $\varepsilon > 0$ be small enough depending on $m$, let $w$ be large enough depending on $\varepsilon$ and $m$, and let $W = \prod_{p \leq w} p$. 

Let $N'$ be an odd positive integer, sufficiently large in terms of all the preceding quantities $m,k,\delta_0,\rho,K,\varepsilon,W$. Our goal is to find a representation
\[ N' = p_1 + p_2 + p_3, \]
where, for $j = 1, 2, 3$, $p_j$ are primes such that the interval $[p_j, p_j + H]$ contains at least $m$ primes. For $j = 1, 2, 3$, let $b_j$ be integers such that $1 \leq b_j \leq W$, $(b_j, W) = 1$, and $N' \equiv b_1 + b_2 + b_3 \pmod{W}$. Let
\[ N = \frac{N' - b_1 - b_ 2 - b_3}{W}. \]
Let $h_1^{(1)}, \dotsc, h_{k-1}^{(1)} \ll_m 1$ be as in Theorem~\ref{th:MaynPrimesinBohrSets} with $\chi = 1$. We can assume that $w$ is so large that $|h_i^{(1)}| < w/2$ for each $i$.

With these choices $w, b_1, h_j^{(1)}$ we define 
\begin{equation}
\label{eq:Xdef}
\begin{split}
X_1 = \{n \leq N \colon Wn+b_1 \in \mathbb{P}, & \quad Wn+b_1+Wh_i^{(1)} \in \mathbb{P} \text{ for } i = 1, \dotsc, m-1, \\
&\text{ and } \quad p \mid \prod_{i = m}^{k-1} (Wn+b_1+Wh_i^{(1)}) \implies p \geq N^{\rho/2}\},
\end{split}
\end{equation}
and let $f_1 \colon \mathbb{Z} \to \mathbb{R}_{\geq 0}$ be defined by
\begin{equation}
\label{eq:fdef}
f_1(n) = 
\begin{cases}
(\log N)^k \frac{\varphi(W)^k}{W^k} &\text{if $n \in X_1 \cap [0.2N, 0.4N)$;} \\
0 & \text{otherwise.}
\end{cases}
\end{equation}
Theorem~\ref{th:MaynPrimesinBohrSets} implies
\[ 
\sum_{\substack{0.2N \leq n < 0.4N}} f_1(n) \geq \frac{\delta_0}{10} N
\]
whereas Proposition~\ref{prop:GreenTaoLpnorm} applied with the linear forms
\[ \CL = \{Wn + b_1, Wn + b_1 + Wh_1^{(1)}, \cdots, Wn + b_1 + Wh_{k-1}^{(1)}\}  \]
implies
\[
\sum_{\xi \in G} |\widehat{f_1}(\xi)|^{5/2} \leq K
\]
since $|h_j^{(1)}| \leq |w|/2$, so that $\mathfrak{S}(\CL) \leq (W/\varphi(W))^k$.

Let further $\chi = \chi_{\Omega, \eta}$ be the Bohr cutoff associated to $f_1$ with $\delta = \delta_0/40$ from Theorem~\ref{thm:transfer}, with $|\Omega| \ll_m 1$, $1 \in \Omega$, and $1 \ll_m \eta < 0.05$. For $j =2, 3$, let $h_1^{(j)}, \dotsc, h_{k-1}^{(j)} \ll_m 1$ be as in Theorem~\ref{th:MaynPrimesinBohrSets} with $b = b_j$ and this choice of $\chi$. We can assume that $w$ is so large that $|h_i^{(j)}| < w/2$. With these choices $w, b_j, h_i^{(j)}$ we define, for $j = 2, 3$, $X_j$ and $f_j$ analogously to~\eqref{eq:Xdef} and~\eqref{eq:fdef}, but with $f_j$ now supported on $[N/4, N/2)$. For $t \in [N/4, N/2)$, Theorem~\ref{th:MaynPrimesinBohrSets} implies
\[
\begin{split}
\sum_{N/4 \leq n < N/2} f_j(n) \chi(t-n) &\geq \frac{\delta_0}{10} \left(\sum_{N/4 \leq n < N/2} \chi(t-n) + O\left(\frac{N}{w^{1/3}} + \varepsilon N \right)\right) \\
&\geq \frac{\delta_0}{30} \left(\sum_{n \in G} \chi(n) + O\left(\frac{N}{w^{1/3}} + \varepsilon N \right)\right),
\end{split}
\]
where the second inequality follows since $\chi$ is symmetric around $0$ and is essentially supported on $|n| \leq 0.1 N$, in the sense that
\begin{equation}
\label{eq:chi(n)essentialSupport}
\sum_{0.1N < n < 0.9N} \chi(n) \leq \eta \sum_{n \in G} \chi(n) 
\end{equation}
by Lemma~\ref{le:BohrCutOffProp}. When $w$ is large enough and $\varepsilon$ is small enough in terms of $m, \eta$ and $|\Omega|$ (the size of which depend only on $m$), this together with Lemma~\ref{le:BohrCutOffProp} implies that
\[
f_j \ast \chi(t) \geq \frac{\delta_0}{40}\Vert \chi \Vert_1.
\]
Furthermore Proposition~\ref{prop:GreenTaoLpnorm} implies that, for $j = 2, 3$,
\[
\sum_{\xi \in G} |\widehat{f_j}(\xi)|^{5/2} \leq K.
\]
Hence all the assumptions of Theorem~\ref{thm:transfer} are satisfied, and thus $f_1 * f_2 * f_3(N) \gg \delta^3$. In particular, there exists $n_1, n_2, n_3$ lying in the support of $f_1, f_2, f_3$, respectively, such that $n_1 + n_2 + n_3 \equiv 0 \pmod{N}$. By the definitions of $f_1,f_2,f_3$, we necessarily have $n_1 + n_2 + n_3 = N$, and moreover for $i=1,2,3$, $Wn_i + b_i$ are primes and so are $Wn_i + b_i + Wh_j^{(i)}$ for $1 \leq j \leq m-1$. This gives the desired representation
\[ N' = (Wn_1 + b_1) + (Wn_2 + b_2) + (Wn_3 + b_3), \]
once $H$ is large enough in terms of $m$.
\end{proof}

\begin{proof}[Proof of Theorem~\ref{th:GoldbachChen}]
Let $K = K(2, 1/2000)$, where $K(k, \rho)$ is as in Proposition~\ref{prop:GreenTaoLpnorm}. Let $w$ be a large parameter, and let $W = \prod_{p \leq w} p$. 

Let $N'\equiv 3\pmod{6}$ be a positive integer, sufficiently large in terms of $K,W$. Our goal is to find a representation
\[ N' = p_1 + p_2 + p_3, \]
where, for $j = 1, 2, 3$, $p_j+2$ has at most two prime factors. For $j = 1,2, 3$, let $b_j$ be integers such that $1 \leq b_j \leq W$, $(b_j, W) = (b_j+2,W) = 1$, and $N' \equiv b_1 + b_2 + b_3 \pmod{W}$. Let
\[ N = \frac{N' - b_1 - b_ 2 - b_3}{W}. \]
For $j =1, 2, 3$, we define
\[
\begin{split}
X_j = \{n \leq N \colon Wn+b_j \in \mathbb{P}, \quad Wn + b_j + 2 = P_2, \quad p \mid Wn+b_j+2 \implies p \geq N^{1/1000}\},
\end{split}
\]
and let $f_1 \colon \mathbb{Z} \to \mathbb{R}_{\geq 0}$ be defined by
\[
f_1(n) = 
\begin{cases}
(\log N)^2 \frac{\varphi(W)^2}{W^2} &\text{if $n \in X_1 \cap [0.2N, 0.4N)$;} \\
0 & \text{otherwise.}
\end{cases}
\]

Now Theorem~\ref{th:ChenPrimesinBohrSets} with $\chi = 1$ implies that
\[ 
\sum_{\substack{0.2N \leq n < 0.4 N}} f_1(n) \geq \frac{\delta_1}{10} N
\]
Let further $\chi = \chi_{\Omega, \eta}$ be the Bohr cutoff associated to $f_1$ with $\delta = \delta_1/40$ from Theorem~\ref{thm:transfer}, with $|\Omega| \ll 1$, $1\in \Omega$, and $1 \ll \eta < 0.05$. We define $f_j$ for $j = 2, 3$ as $f_1$ but with support $[N/4, N/2)$. Now Theorem~\ref{th:ChenPrimesinBohrSets} implies that, for $j = 2, 3$, and $t \in [N/4, N/2)$,
\[
\sum_{\substack{N/4 \leq n < N/2}} f_j(n) \chi(t-n) \geq \frac{\delta_1}{10} \left(\sum_{N/4 \leq n < N/2} \chi(t-n) + O\left(\frac{N}{w^{1/3}}\right)\right) \geq \frac{\delta_1}{30} \left(\sum_{n \in G} \chi(n) + O\left(\frac{N}{w^{1/3}}\right)\right)
\]
since $\chi(n)$ is essentially supported on $|n| \leq 0.1 N$ (see~\eqref{eq:chi(n)essentialSupport}) and is symmetric around $0$. When $w$ is large enough in terms of $\eta$ and $\Omega$ (sizes of which depend only on $m$), this and Lemma~\ref{le:BohrCutOffProp} imply that
\[
f_j \ast \chi(t) \geq \frac{\delta_1}{40}\Vert \chi \Vert_1.
\]
Furthermore Proposition~\ref{prop:GreenTaoLpnorm} implies that, for $j = 1, 2, 3$,
\[
\sum_{\xi \in G} |\widehat{f_j}(\xi)|^{5/2} \leq K.
\]

Hence all the assumptions of Theorem~\ref{thm:transfer} are satisfied, and thus $f_1 * f_2 * f_3(N) \gg \delta^3$. In particular, there exists $n_1, n_2, n_3$ lying in the support of $f_1, f_2, f_3$, respectively, such that $n_1 + n_2 + n_3 \equiv 0 \pmod{N}$. By the definitions of $f_1,f_2,f_3$, we necessarily have $n_1 + n_2 + n_3 = N$, and moreover for each $i=1,2,3$, $Wn_i + b_i$ is a prime and $Wn_i + b_i + 2$ has at most two prime factors. This gives the desired representation
\[ N' = (Wn_1 + b_1) + (Wn_2 + b_2) + (Wn_3 + b_3). \]
\end{proof}

\section{Weighted versions of Maynard's theorem and Chen's theorem}
\label{se:MaynardChenTheorems}
As discussed in the introduction, the celebrated result of Maynard~\cite{MaynardI} (obtained independently by Tao in an unpublished work) tells that, for each $m \geq 1$, there exists a constant $H = H(m)$ such that there exists infinitely many primes $p$ for which the interval $[p, p+H]$ contains at least $m$ primes. In a subsequent paper~\cite{MaynardII}, Maynard generalised the result to show that any subset of the primes which is well-distributed in arithmetic progressions (in a certain precise sense) contains many primes with bounded gaps, and also made an extension to linear forms representing primes.

In this section we state a slight variant of the main result of~\cite{MaynardII} in the case when the underlying set is weighted with weights $\omega_n \geq 0$. We also carefully state the dependencies between different parameters.

For a linear function $L(n) = l_1 n + l_2$, we define $\varphi_L(q) = \varphi(|l_1|q)/\varphi(|l_1|)$. Let us first state the needed hypotheses which correspond to~\cite[Hypothesis 1]{MaynardII}.
\begin{hypothesis}
\label{hyp:MaynardHyp}
For a sequence $(\omega_n)$, a set of $k$ admissible linear forms $\mathcal{L}$, and real numbers $x \geq 2, \theta \in (0, 1)$ and $C_H > 0$, we formulate the following hypothesis. 
\begin{enumerate}[(1)]
\item $(\omega_n)$ is well-distributed in arithmetic progressions: We have
\[
\sum_{r \leq x^\theta} \max_c \Biggl|\sum_{\substack{x \leq n < 2x \\ n \equiv c \pmod{r}}} \omega_n - \frac{1}{r} \sum_{\substack{x \leq n < 2x}} \omega_n\Biggr| \leq C_H \frac{\sum_{\substack{x \leq n < 2x}} \omega_n}{(\log x)^{101k^2}}.
\]
\item Primes represented by linear forms in $\mathcal{L}$ are well-distributed in arithmetic progressions: For any $L \in \mathcal{L}$, we have
\[
\sum_{r \leq x^\theta} \max_{(L(c), r) = 1} \Biggl|\sum_{\substack{x \leq n < 2x \\ n \equiv c \pmod{r} \\ L(n) \in \mathbb{P}}} \omega_n - \frac{1}{\varphi_L(r)} \sum_{\substack{x \leq n < 2x \\ L(n) \in \mathbb{P}}} \omega_n \Biggr| \leq C_H \frac{\sum_{\substack{x \leq n < 2x}} \omega_n}{(\log x)^{101k^2}}.
\]
\item $(\omega_n)$ is not too concentrated in any arithmetic progression: For any $r \leq x^\theta$ and any $c$, we have
\[
\sum_{\substack{x \leq n < 2x \\ n \equiv c \pmod{r}}} \omega_n \leq C_H \frac{1}{r} \sum_{\substack{x \leq n < 2x}} \omega_n.
\]
\end{enumerate}
\end{hypothesis}

The slight variant of Maynard's main theorem~\cite[Theorem 3.1]{MaynardII} now states
\begin{theorem}
\label{th:Maynard}
Let $\alpha > 0, \theta \in (0, 1)$ and $C_H > 0$. There exist a constant $C = C(\alpha, \theta)$ such that, for any $k \geq C$ there exist positive constants $x_0 = x_0(\alpha, \theta, k, C_H), \delta_0 = \delta_0(\alpha, \theta, k)$ and $\rho = \rho(\alpha, \theta, k)$ such that the following holds.

Let $(\omega_n)$ be a sequence of non-negative real numbers, let $\mathcal{L} = \{L_1, \dotsc, L_k\}$ be an admissible set of $k$ linear functions, and let $x \geq x_0$ be an integer. Assume that the coefficients of $L_i(n) = a_i n + b_i$ satisfy $1 \leq a_i, b_i \leq x^\alpha$ for all $1 \leq i \leq k$, and assume that $k \leq (\log x)^\alpha$.

If Hypothesis~\ref{hyp:MaynardHyp} holds and $\delta > 1/(\log k)$ is such that
\begin{equation}
\label{eq:MaynardHypExtra}
\frac{1}{k} \sum_{L \in \mathcal{L}} \frac{\varphi(a_i)}{a_i} \sum_{\substack{x \leq n < 2x \\ L(n) \in \mathbb{P}}} \omega_n \geq \frac{\delta}{\log x} \sum_{x \leq n < 2x} \omega_n,
\end{equation}
then
\[
\sum_{\substack{x \leq n < 2x \\ \#(\{L_1(n), \dotsc, L_k(n)\} \cap \mathbb{P}) \geq C^{-1} \delta \log k \\ p \mid L_1(n) \dotsm L_k(n) \implies p > x^{\rho}}} \omega_n \geq \delta_0 \frac{\mathfrak{S}(\mathcal{L})}{(\log x)^k \exp(Ck)} \sum_{x \leq n < 2x} \omega_n,
\]
where $\FS(\CL)$ is defined as in \eqref{eq:singseries}.
%\begin{equation}
%\label{eq:singseries}
%\mathfrak{S}(\mathcal{L}) = \prod_p \left(1-\frac{\#\{1 \leq n \leq p \colon p \mid \prod_{i=1}^k L_i(n)\}}{p}\right)\left(1-\frac{1}{p}\right)^{-k} %\gg \frac{1}{\exp(O(k))}.
%\end{equation}
\end{theorem}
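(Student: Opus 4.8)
The plan is to run Maynard's multidimensional sieve \cite{MaynardII} in the weighted setting, carefully tracking all parameter dependencies; Theorem~\ref{th:Maynard} is essentially a careful reformulation of \cite[Theorem 3.1]{MaynardII}. Fix a small $\varepsilon>0$, set $R=x^{\theta/2-\varepsilon}$, and fix a smooth function $F$ supported on the simplex $\{(t_1,\dotsc,t_k):t_i\ge 0,\ t_1+\dotsb+t_k\le 1\}$. With $L_i(n)=a_in+b_i$ as given, introduce the GPY--Maynard sieve weight
\[
w_n=\Biggl(\sum_{\substack{d_i\mid L_i(n)\ (1\le i\le k)\\ d_1\dotsm d_k\le R}}\lambda_{d_1,\dotsc,d_k}\Biggr)^2,
\]
where $\lambda_{d_1,\dotsc,d_k}$ is the usual quantity attached to $F$ (supported on squarefree $d_1\dotsm d_k$ with the $d_i$ pairwise coprime and coprime to $\prod_i a_i$, and satisfying $|\lambda_{d_1,\dotsc,d_k}|\ll(\log R)^k$). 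We work throughout with $n$ for which $\prod_i L_i(n)$ has no prime factor below $x^\rho$ --- this is both part of the conclusion and, as noted below, exactly what makes the final counting step work --- so we study
\[
S_1=\sum_{\substack{x\le n<2x\\ p\mid\prod_iL_i(n)\implies p>x^\rho}}\omega_n w_n,\qquad S_2=\sum_{m=1}^k\ \sum_{\substack{x\le n<2x\\ L_m(n)\in\Prime\\ p\mid\prod_iL_i(n)\implies p>x^\rho}}\omega_n w_n.
\]

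Expanding the squares and interchanging summation, the evaluation of $S_1$ reduces to estimating weighted counts $\sum_{x\le n<2x,\, n\equiv c\,(r)}\omega_n$ --- restricted to $n$ with $\prod_iL_i(n)$ being $x^\rho$-rough --- over moduli $r\le R^2\le x^\theta$: Hypothesis~\ref{hyp:MaynardHyp}(1) handles the average over $c$ of the deviation from the expected main term $\tfrac1r\sum_{x\le n<2x}\omega_n$, the fundamental lemma of sieve theory handles the $x^\rho$-rough side condition (producing a factor depending only on $\rho$, $k$ and the local densities of $\CL$), and Hypothesis~\ref{hyp:MaynardHyp}(3) controls the diagonal terms $d_i=e_i$. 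In the same way $S_2$ reduces, via Hypothesis~\ref{hyp:MaynardHyp}(2), to the weighted prime counts $\sum_{x\le n<2x,\,L_m(n)\in\Prime}\omega_n$. Carrying out the resulting main-term computations exactly as in \cite{MaynardII}, with the length $x$ there replaced by the mass $X=\sum_{x\le n<2x}\omega_n$, one obtains an asymptotic for $S_1$ governed by Maynard's functional $I_k(F)$, and for each $m$ an evaluation of the corresponding piece of $S_2$ governed by $J_k^{(m)}(F)$ and by $\tfrac{\varphi(a_m)}{a_m}\sum_{x\le n<2x,\,L_m(n)\in\Prime}\omega_n$, the factor $\varphi(a_m)/a_m$ being exactly the local density at the primes dividing $a_m$. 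Summing over $m$ and invoking the lower bound \eqref{eq:MaynardHypExtra}, whose $\varphi(a_m)/a_m$ weights match those just produced, gives $S_2\ge(1-o(1))\,\mathcal R_k\,\delta\,S_1$ with $\mathcal R_k=\tfrac{\log R}{\log x}\cdot\sum_{m=1}^k J_k^{(m)}(F)/I_k(F)$.

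Now choose $F$ to be Maynard's near-optimal symmetric test function, for which $\sum_{m=1}^k J_k^{(m)}(F)\ge c(\log k)I_k(F)$ with an absolute $c>0$ once $k$ is large enough; since $\log R=(\tfrac\theta2-\varepsilon)\log x$, this gives $S_2\ge\tfrac{\theta c}{4}\,\delta\log k\cdot S_1$, say. Taking $C=C(\alpha,\theta)$ large enough --- both so that this estimate is available and so that $\tfrac{\theta c}{4}\ge C^{-1}$ with room to spare --- and recalling $\delta>1/\log k$, we conclude $S_2-\rho'S_1>0$ where $\rho'=\lceil C^{-1}\delta\log k\rceil-1$. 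Since $\omega_n\ge 0$ and $w_n\ge 0$, this already forces the existence of $n$ in the support with $\#(\{L_1(n),\dotsc,L_k(n)\}\cap\Prime)\ge C^{-1}\delta\log k$. To upgrade this to the quantitative lower bound, observe that the $x^\rho$-roughness of $\prod_iL_i(n)$ forces every divisor $d_1\dotsm d_k\le R$ of it to have at most $\theta/(2\rho)$ prime factors, so the number of tuples $\mathbf d$ contributing to $w_n$ is bounded polynomially in $k$ and hence $w_n\ll(\log x)^{O(k)}$ uniformly on the support; comparing $S_2-\rho'S_1$ with $\sum_{n\ \mathrm{good}}\omega_n$ through this bound, and inserting the standard lower bounds for $I_k(F)$ and $\FS(\CL)$ together with the explicit fundamental-lemma factor, yields a lower bound for $\sum_{n\ \mathrm{good}}\omega_n$ of the stated shape $\delta_0\,\FS(\CL)(\log x)^{-k}\exp(-Ck)\,X$.

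The delicate point --- and the reason Hypothesis~\ref{hyp:MaynardHyp} carries the saving $(\log x)^{-101k^2}$ rather than a fixed power --- is the uniform control of the error terms when $k$ is allowed to grow with $x$ up to $k\le(\log x)^\alpha$: expanding $w_n$ and collecting the error against the three parts of Hypothesis~\ref{hyp:MaynardHyp} produces combinatorial factors that are at worst $(\log x)^{O(k^2)}$, and one must verify that $101k^2$ comfortably dominates these; when $k$ is bounded this is automatic, which is why \cite{MaynardII} states its hypothesis with a fixed power, but here it requires a (routine) check. Apart from this uniformity issue, every ingredient --- the weighted forms of Maynard's asymptotic lemmas, the properties of the test function $F$, and the sieve-theoretic treatment of the $x^\rho$-rough condition --- follows \cite{MaynardII} with only cosmetic changes.
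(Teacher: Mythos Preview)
Your approach is the same as the paper's --- both simply run Maynard's multidimensional sieve from \cite{MaynardII} with $\#\mathcal{A}(x)$ replaced by the weighted mass $\sum_{x\le n<2x}\omega_n$ --- and your sketch of how the argument proceeds is essentially correct. However, you have missed the one point that the paper's own proof singles out as requiring attention, and it is precisely the point that makes the theorem usable downstream.

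The statement asserts that $\delta_0$ and $\rho$ depend only on $\alpha,\theta,k$ and \emph{not} on $C_H$; only $x_0$ is allowed to depend on $C_H$. Your sketch never addresses this, and nothing you wrote makes it clear. The issue is that Hypothesis~\ref{hyp:MaynardHyp}(3) carries the constant $C_H$ explicitly, and when it is invoked (via Cauchy--Schwarz to dispose of divisor functions, as in \cite[(9.2)--(9.3)]{MaynardII} --- not, as you wrote, to control the diagonal $d_i=e_i$) it threatens to contaminate the final constants. The paper's argument is that parts (1),(2) of Hypothesis~\ref{hyp:MaynardHyp} already hold with implied constant $1$ once $x$ is large enough in terms of $C_H$, and that every use of part (3) is paired with a use of (1) or (2) that saves an extra power of $\log x$, so the $C_H$ in (3) is absorbed into $x_0$ rather than into $\delta_0$ or $\rho$. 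Without this observation the stated parameter dependencies are not justified, and in the application (Section~\ref{sec:MaynApplication}) $C_H$ depends on $M$ and $w$, so the independence is essential.

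A smaller inaccuracy: \cite{MaynardII} already states its Hypothesis~1 with the saving $(\log x)^{-100k^2}$, not a fixed power, so your final paragraph's contrast is off; the present paper's $101k^2$ is just Maynard's hypothesis with a cosmetic extra power to absorb the $C_H$ as above.
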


\begin{proof}
The proof is the same as Maynard's~\cite[Proof of Theorem 3.1]{MaynardII}. Introducing the weights $\omega_n$ makes no difference once one replaces $\# \mathcal{A}(x)$ in~\cite{MaynardII} by the weighted version $\sum_{x \leq n < 2x} \omega_n$ etc. Furthermore, to see that the constants $\delta_0$ and $\rho$ do not depend on $C_H$, notice that Hypothesis~\ref{hyp:MaynardHyp}(1, 2) imply~\cite[Hypothesis 1(1,2)]{MaynardII} with implied constant one once $x$ is large enough in terms of $C_H$. On the other hand, in~\cite[Proof of Theorem 3.1]{MaynardII},~\cite[Hypothesis 1(3)]{MaynardII} is only used together with~\cite[Hypothesis 1(1) or (2)]{MaynardII} to dispose of some divisor functions through the Cauchy-Schwarz inequality (see~\cite[Formulas (9.2)--(9.3)]{MaynardII} for a typical example). In these situations one also wins a power of $\log x$ and thus can take the implied constant in the resulting bounds to be one once $x$ is large enough in terms of $C_H$. Hence none of the implied constants in the proof of Maynard's theorem depend on $C_H$ once $x$ is large enough in terms of $C_H$.
\end{proof}

Next we formulate a similar general version of Chen's theorem. We will need the notion of a well-factorable function of level $R$ by which we mean a function $\lambda: \mathbb{N} \cap [1, R] \to [-1,1]$ such that, for any $S, T \geq 1$ with $ST = R$, we can write $\lambda = \gamma \ast \delta$ with $1$-bounded functions $\gamma$ and $\delta$ supported respectively on $[1, S]$ and $[1, T]$. 
\begin{hypothesis}\label{hyp:ChenHyp}
For $\varepsilon \in (0,0.1)$, a sequence $(\omega_n)$ of non-negative real numbers, a set of two admissible linear forms $\CL = \{L_1, L_2\}$ with $L_i(n) = u_in + v_i$, and a real number $x \geq 2$, we formulate the following hypotheses.
\begin{enumerate}
\item
Primes represented by $L_1$ are well-distributed in arithmetic progressions: We have
\[ 
\sum_{\substack{r \\ (r,u_2(u_2v_1 - u_1v_2))=1}} \mu(r)^2\lambda_r \Biggl( \sum_{\substack{x \leq n < 2x \\ r \vert L_2(n) \\ L_1(n) \in \Prime}}  \omega_n -  \frac{u_1}{\varphi(r u_1)} \sum_{\substack{x \leq n < 2x}} \frac{\omega_n}{\log L_1(n)} \Biggr) \leq \frac{\sum_{x \leq n < 2x} \omega_n}{(\log x)^{10}}
\]
whenever $\lambda$ is a well-factorable function of level $x^{1/2-\varepsilon}$ or $\lambda = 1_{p \in [P, P')} \ast \lambda'$, where $\lambda'$ is a well-factorable function of level $x^{1/2-\varepsilon}/P$ and $2P \geq P' \geq P \in [x^{1/10}, x^{1/3-\varepsilon}]$.
\item
Almost primes represented by $L_2$ are well-distributed in arithmetic progressions: We have, for $j = 1, 2$,
\[ 
\begin{split}
&\sum_{\substack{r \\ (r,u_1(u_1v_2 - u_2v_1))=1}} \mu(r)^2 \lambda_r \Biggl( \sum_{\substack{x \leq n < 2x \\ r \vert L_1(n) \\ L_2(n) \in B_j}}  \omega_n - \frac{1}{\varphi_{L_2}(r)} \sum_{\substack{x \leq n < 2x \\ L_2(n) \in B_j}} \omega_n \Biggr) \leq \frac{\sum_{x \leq n < 2x} \omega_n}{(\log x)^{10}}
\end{split}
\]
whenever $\lambda$ is a well-factorable function of level $x^{1/2-\varepsilon}$, where
\begin{equation}\label{eq:ChenB} 
\begin{split}
B_1 &= \{n = p_1p_2p_3 \ \vert \ x^{1/10} \leq p_1 < x^{1/3-\varepsilon}, x^{1/3-\varepsilon} \leq p_2 \leq (L_2(2x)/p_1)^{1/2}, p_3 \geq x^{1/10} \} \\
\text{and} \quad B_2 &= \{n = p_1p_2p_3 \ \vert \ x^{1/3-\varepsilon} \leq p_1 \leq p_2 \leq (L_2(2x)/p_1)^{1/2}, p_3 \geq x^{1/10}\}.
\end{split}
\end{equation}
\item
$(\omega_n)$ is not concentrated in $B_j$: We have, for $j=1,2$,
\[
\sum_{\substack{x \leq n < 2x \\ L_2(n) \in B_j}} \omega_n  \leq (1+o(1)) \frac{|B_j \cap [L_2(x), L_2(2x))|}{\varphi(u_2)} \cdot \frac{1}{x} \sum_{x \leq n < 2x} \omega_n, 
\]
\end{enumerate}
\end{hypothesis}

Note that the factor $u_1/(\varphi(ru_1) \log L_1(n))$ in the first hypothesis is the probability that a randomly chosen $n \in [x, 2x)$ satisfies $r \mid L_2(n)$ and $L_1(n) \in \Prime$. Note also that it is straightforward to find the density of $B_j$: If $u_2, v_2 \leq x^{o(1)}$ then
\[ |B_j \cap [L_2(x), L_2(2x))| = (\delta(B_j)+o(1))\frac{u_2x}{\log x}, \]
where
\begin{equation}\label{eq:density-Bj}
\delta(B_1) = \int_{1/10}^{1/3-\varepsilon} \int_{1/3-\varepsilon}^{(1-\alpha_1)/2} \frac{d\alpha_2 d\alpha_1}{\alpha_1\alpha_2(1-\alpha_1-\alpha_2)}, \quad \delta(B_2) = \int_{1/3-\varepsilon}^{1/3} \int_{\alpha_1}^{(1-\alpha_1)/2} \frac{d\alpha_2 d\alpha_1}{\alpha_1\alpha_2(1-\alpha_1-\alpha_2)}.
\end{equation}

To see that the coprimality conditions $(r,u_2(u_2v_1 - u_1v_2)) = 1$ and $(r, u_1(u_1v_2 - u_2v_1)) = 1$ occur naturally, note that if $r$ and $u_i$ share a common prime divisor $p$, then $p\nmid L_i(n)$ for all $n$ by the admissibility of $L_i$, and thus the sum over those $n$ satisfying $r \vert L_i(n)$ is empty. Similarly, if $(r,u_i)=1$ but $r$ and $u_2v_1 - u_1v_2$ share a common prime divisor $p < x^{1/10}$, then $p \vert L_i(n)$ implies $p \vert L_j(n)$ (where $j = 3 - i$), and thus the sum over those $n$ satisfying $r \vert L_i(n)$ and $L_j(n) \in B$ (or $L_j(n) \in \Prime$) is empty.

\begin{theorem}\label{thm:chen}
There exist positive constants $\delta_0, \varepsilon$ and $x_0$ such that the following holds. Let $(\omega_n)$ be a sequence of non-negative real numbers, $\CL = \{L_1,L_2\}$ be an admissible set of two linear functions, and let $x \geq x_0$. Assume that the coefficients of $L_i(n) = u_in + v_i$ satisfy $1 \leq u_i, v_i \leq x^{o(1)}$, and that Hypothesis~\ref{hyp:ChenHyp} holds. Then
\[ \sum_{\substack{x \leq n < 2x \\ L_1(n) \in \Prime \\ L_2(n) = P_2 \\ p \mid L_2(n) \implies p \geq x^{1/10} }} \omega_n \geq \delta_0 \frac{ \FS(\CL) }{ (\log x)^2} \sum_{x \leq n < 2x} \omega_n - O\left(x^{0.9} \max_n \omega_n\right) \]
where $\FS(\CL)$ is as in~\eqref{eq:singseries}.
\end{theorem}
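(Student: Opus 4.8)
The plan is to run Chen's argument --- a one–dimensional weighted linear sieve together with the switching trick --- using Hypothesis~\ref{hyp:ChenHyp}(1)--(3) as the arithmetic input in place of the classical Bombieri--Vinogradov and Iwaniec-type bilinear estimates. Throughout set $z = x^{1/10}$ and $y = x^{1/3-\varepsilon}$ and write $P(z) = \prod_{p<z}p$; since $u_i,v_i \le x^{o(1)}$ we have $x \le L_i(n) \le x^{1+o(1)}$ on $x \le n < 2x$. First I would discard the $n$ for which $L_1(n)$ or $L_2(n)$ has a square factor $p^2$ with $p \ge z$: these contribute $O((x/z)\max_n\omega_n) = O(x^{0.9}\max_n\omega_n)$, which goes into the error term, so that afterwards $L_1(n)$ and $L_2(n)$ may be taken squarefree.

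\emph{Combinatorial reduction.} Let $\CA = (\omega_n 1_{L_1(n)\in\Prime})_{x\le n<2x}$, regarded as a sequence indexed by $L_2(n)$; let $\CA_p$ be its subsequence with $p \mid L_2(n)$; and for $j=1,2$ put $\Sigma_{B_j} = \sum_{x\le n<2x,\,L_1(n)\in\Prime,\,L_2(n)\in B_j}\omega_n$ with $B_1,B_2$ as in~\eqref{eq:ChenB}. If $L_1(n)\in\Prime$ and $(L_2(n),P(z))=1$, then either $L_2(n)$ is a $P_2$, or $L_2(n)=q_1q_2q_3$ with $z\le q_1\le q_2\le q_3$ and no further prime factor (a fourth factor $\ge z$ would force $L_2(n)\ge z\,y^3 = x^{1.1-3\varepsilon}$, impossible for small $\varepsilon$). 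Weighting each such $n$ by $1-\tfrac12\#\{p\in[z,y):p\mid L_2(n)\}$ and noting that this is $\le 0$ unless $L_2(n)$ is a $P_2$ or $L_2(n)\in B_1\cup B_2$ --- the cases $q_1<y\le q_2$ and $q_1\ge y$ landing in $B_1$, $B_2$ respectively, via $q_2\le(q_2q_3)^{1/2}\le(L_2(2x)/q_1)^{1/2}$ and unique factorisation --- gives Chen's inequality
\[
\sum_{\substack{x\le n<2x\\ L_1(n)\in\Prime,\ L_2(n)=P_2\\ p\mid L_2(n)\implies p\ge z}}\omega_n \ \ge\ S(\CA,z)-\frac12\sum_{z\le p<y}S(\CA_p,z)-\frac12\Sigma_{B_1}-\Sigma_{B_2}.
\]

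\emph{The main sieve term and the switching trick.} The sequence $\CA$ has density $\omega(p)/p\asymp 1/\varphi(p)$ (dimension one) and main term $X\asymp\FS(\CL)(\log x)^{-1}\sum_n\omega_n$, whose precise shape comes from the probability factor $u_1/(\varphi(ru_1)\log L_1(n))$ in Hypothesis~\ref{hyp:ChenHyp}(1); the same hypothesis furnishes level of distribution $x^{1/2-\varepsilon}$ with well-factorable remainder --- the plain well-factorable case handling $S(\CA,z)$, and the case $\lambda = 1_{p\in[P,P')}\ast\lambda'$ handling the dyadic ranges of $p$ in $\sum_{z\le p<y}S(\CA_p,z)$. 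The Jurkat--Richert/Iwaniec linear sieve, together with a routine singular-series computation matching $XV(z)$ with $\FS(\CL)(\log x)^{-2}\sum_n\omega_n$, then bounds $S(\CA,z)-\tfrac12\sum_{z\le p<y}S(\CA_p,z)$ below by $\FS(\CL)(\log x)^{-2}\sum_n\omega_n\,(c\,f(5-10\varepsilon)-\tfrac{c}{2}J(\varepsilon)-o(1))$, where $f,F$ are the linear sieve functions, $c>0$ is an absolute constant, and $J(\varepsilon)=\int_{1/10}^{1/3-\varepsilon}F(5-10\varepsilon-10t)\,\tfrac{dt}{t}$ is the Buchstab-type integral in $t=\log p/\log x$ (possibly sharpened by one more Buchstab iteration). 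For the two correction terms I would invoke Chen's switching trick: up to negligible errors $\Sigma_{B_j}$ is the sifting function of the sequence $(\omega_n 1_{L_2(n)\in B_j})_{x\le n<2x}$, indexed by $L_1(n)$ and sieved by the primes below $z$ dividing $L_1(n)$. By Hypothesis~\ref{hyp:ChenHyp}(2) this sequence has level of distribution $x^{1/2-\varepsilon}$ with well-factorable remainder --- here the three-prime structure of $B_j$, with the largest prime as the long variable, is exactly what makes the well-factorable decomposition available --- and by Hypothesis~\ref{hyp:ChenHyp}(3) its main term is governed by the explicit density $\delta(B_j)$ of~\eqref{eq:density-Bj}. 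The linear sieve upper bound then gives $\Sigma_{B_j}\le\FS(\CL)(\log x)^{-2}\sum_n\omega_n\,(c\,\delta(B_j)+o(1))F(5-10\varepsilon)$, the $V$-functions and singular series of the switched problem agreeing with those above up to $1+o(1)$.

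\emph{Conclusion and the main difficulty.} Collecting the three estimates, the left-hand side of the theorem is at least
\[
\FS(\CL)\,\frac{\sum_n\omega_n}{(\log x)^2}\Bigl(c\,f(5-10\varepsilon)-\tfrac{c}{2}J(\varepsilon)-\tfrac{c}{2}\delta(B_1)F(5-10\varepsilon)-c\,\delta(B_2)F(5-10\varepsilon)-o(1)\Bigr)-O\!\bigl(x^{0.9}\max_n\omega_n\bigr),
\]
so it remains to check that the parenthesised quantity exceeds a positive absolute constant once $\varepsilon$ is small enough. This is precisely the numerical inequality underlying Chen's theorem, and the reason the cut-offs $x^{1/10}$ and $x^{1/3-\varepsilon}$ are chosen as they are; I expect this verification to be the delicate point, the rest being standard sieve bookkeeping. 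The only real departure from the classical case is tracking uniformity in $\CL$ (coefficients of size $x^{o(1)}$) and in the weights $(\omega_n)$, which affects only the singular series and the $o(1)$ terms and causes no difficulty.
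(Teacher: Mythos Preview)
Your outline matches the paper's proof in structure: the same weighted-sieve inequality $S(\CA,z)-\tfrac12\sum_{z\le p<y}S(\CA_p,z)-\tfrac12\Sigma_{B_1}-\Sigma_{B_2}$, the same use of Hypothesis~\ref{hyp:ChenHyp}(1) through Iwaniec's well-factorable linear sieve for the first two terms, and the same switching trick with Hypotheses~\ref{hyp:ChenHyp}(2),(3) for $\Sigma_{B_1}$ and $\Sigma_{B_2}$.

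There is, however, a genuine gap in the switching step. You sieve the switched sequence $\CB^{(j)}=(\omega_n\, 1_{L_2(n)\in B_j})$, indexed by $m=L_1(n)$, at level $z=x^{1/10}$, which produces the factor $F(5-10\varepsilon)$ in your upper bound for $\Sigma_{B_j}$. But since $L_1(n)$ is a prime exceeding $x$, one has $\Sigma_{B_j}\le S(\CB^{(j)},z')$ for \emph{any} $z'\le x^{1/2-\varepsilon}$, and the linear-sieve upper bound $V(z')F(s)$ with $s=\log D/\log z'$ is minimised precisely when $s\in[1,3]$, where $sF(s)=2e^\gamma$ is constant; for $s>3$ one has $(sF(s))'=f(s-1)>0$. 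The paper accordingly sieves $\CB^{(j)}$ at $z'=x^{1/6}$ (so $s=3-6\varepsilon$), obtaining the coefficient $\tfrac35 F(3-6\varepsilon)\to 2e^\gamma/5\approx 0.712$, whereas your choice gives $F(5)\approx 0.995$, a loss of roughly $40\%$ on the $\Sigma_{B_1}$ term. Chen's numerical inequality is tight --- the margin in the version the paper cites from \cite[Chapter~11]{HalbRichert} is only a few percent --- and with your coefficient the quantity $f(5)-\tfrac12 J(0)-\tfrac12 F(5)\,\delta(B_1)$ comes out negative, so the argument as written does not close. It is \emph{not} ``precisely the numerical inequality underlying Chen's theorem'': that inequality carries $\tfrac35 F(3)$, not $F(5)$, in front of $\delta(B_1)$. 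The fix is simply to sieve $\CB^{(j)}$ at $x^{1/6}$ rather than at $x^{1/10}$; the rest of your sketch is correct.
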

Since the proof is essentially Chen's sieving device written in general terms, we postpone its proof to Appendix~\ref{app:Chen}.

\section{Technical reductions}
\label{se:technRed}
The conclusion of Maynard's theorem does not quite correspond to the conclusion we want in Theorem~\ref{th:MaynPrimesinBohrSets}. However, we can quickly deduce Theorem~\ref{th:MaynPrimesinBohrSets} from the following variant which is more apt for an application of Maynard's theorem.
\begin{proposition}
\label{prop:AlmMaynPrimesinBohrSets}
For any positive integer $m$, there exist a positive integer $k = k(m)$ and positive constants $\delta_1 = \delta_1(m)$ and $\rho = \rho(m)$ such that the following holds. Let $\chi: \Z \rightarrow \R_{\geq 0}$ be a function with Fourier complexity at most $M$ for some $M \geq 1$, let $W = \prod_{p \leq w} p$ and let $(b, W) = 1$. There exists a positive constant $N_0 = N_0(m, M, w)$ such that, for any distinct integers $h_1, \dotsc, h_{k}$ with $|h_j| < w/2$, any $N \geq N_0$ and $|t| \leq 5 N$,
\[ 
\sum_{\substack{ N \leq n < 2N \\ |\{W(n+h_i)+b\} \cap \mathbb{P}| \geq m \\ p \mid \prod_{i = 1}^{k} (W(n+h_i)+b) \implies p \geq N^{\rho}}} \chi(t-n) \geq \delta_1 \frac{1}{(\log N)^k} \frac{W^k}{\varphi(W)^k} \left(\sum_{N \leq n < 2N} \chi(t-n) + O\left(\frac{M^2 N}{w^{1/2}}\right)\right)
\]
\end{proposition}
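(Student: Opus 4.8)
The plan is to obtain this from the weighted form of Maynard's theorem, Theorem~\ref{th:Maynard}, applied to the admissible system $\CL = \{L_i(n) = W(n+h_i)+b : 1\le i\le k\}$ with the weights $\omega_n = \chi(t-n) \ge 0$, after first untangling the Fourier structure of $\chi$. First fix the parameters of Theorem~\ref{th:Maynard}: take $\theta$ slightly below $1/2$ (so Bombieri--Vinogradov is available), $\alpha>0$ small (the coefficients $W$ and $Wh_i+b$ are $\ll_w 1 = N^{o(1)}$, hence $\le N^\alpha$ once $N\ge N_0(w)$), let $C = C(\alpha,\theta)$, fix $\delta = 1/2$, and choose $k = k(m)\ge C$ large enough that $C^{-1}\delta\log k\ge m$; this pins down $\rho = \rho(m)$, while $\delta_1 = \delta_1(m)$ will be read off at the end. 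Note $w\ge k$ is forced (there must be $k$ distinct integers in $(-w/2,w/2)$), and from this $\CL$ is admissible --- for $p\le w$ this is $(b,W)=1$, and for $p>w$ it follows from $|h_i-h_j|<w<p$ together with $k<p$ --- with $\FS(\CL) = (W/\varphi(W))^k\prod_{p>w}(1-k/p)(1-1/p)^{-k}\asymp_m (W/\varphi(W))^k$.

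The obstruction to a direct application is that the well-distribution Hypothesis~\ref{hyp:MaynardHyp}(1) fails for $\omega_n=\chi(t-n)$ whenever $\chi$ carries a Fourier mode on a small-denominator major arc (then $\sum_{r\le N^\theta}\max_c|\cdot|$ is of size $\asymp N\log N/q$, not $\ll N/(\log N)^{101k^2}$). To remedy this, apply Lemma~\ref{le:chi-Fourier} with a large constant $A = A(k)$ (e.g.\ $A = 10^3 k^2$), writing $\chi(n) = \sum_{i=1}^M b_i e((Wa_i/q_i+\beta_i)n)$ together with the modulus $Q\le (\log N)^B$, and set $W' = \mathrm{lcm}(W,Q)$, which is still $N^{o(1)}$ and a multiple of $W$. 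Split $[N,2N)$ into residue classes $n\equiv v\pmod{W'}$. On each class, every mode with $q_i\mid Q$ has $e(Wa_in/q_i)$ constant (as $q_i\mid W'$), so $\chi(t-n)$ restricted to the class is a sum of $\le M$ exponentials of frequency $\ll_w (\log N)^{100B}/N$ plus a remainder built from the modes with $q_i/(q_i,Q^2)>(\log N)^A$, which stays genuinely minor-arc in the relevant progressions apart from at most $O(N^\theta(\log N)^{-A/2})$ resonant moduli. The crucial gain is that a slowly varying phase cancels against its own mean, so its discrepancy in a progression is $O(1)$ per modulus rather than $O(N/r)$ --- and even a resonant modulus, once the mode has resonated, leaves a slowly varying phase with the same $O(1)$ discrepancy. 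Hence on each class the weight obeys Hypothesis~\ref{hyp:MaynardHyp}(1), and (using $\|\chi\|_\infty\le M^2$) also Hypothesis~\ref{hyp:MaynardHyp}(3) with $C_H = C_H(M,w)$, once $N\ge N_0(m,M,w)$.

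Now apply Theorem~\ref{th:Maynard} on each class $n\equiv v\ (W')$ that is \emph{live} (no $L_i(v)$ divisible by a prime $p\mid Q$ with $p>w$ --- otherwise the sieved sum over the class is empty) and \emph{typical} ($S_v := \sum_{n\equiv v(W')}\chi(t-n)\ge \varepsilon_0 N/W'$, $\varepsilon_0 := M^2/w^{1/2}$). Hypotheses~\ref{hyp:MaynardHyp}(1),(3) were just discussed; Hypothesis~\ref{hyp:MaynardHyp}(2) and the extra condition~\eqref{eq:MaynardHypExtra} (with our fixed $\delta=1/2$) follow by expanding $\chi$ into its $\le M$ phases and estimating the resulting sums over primes in progressions --- slowly varying modes by partial summation against the prime number theorem / Bombieri--Vinogradov, minor-arc modes by the exponential-sum-over-primes estimates of Section~\ref{sec:expsumest}. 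Theorem~\ref{th:Maynard} then gives, for each live typical $v$,
\[
\sum_{\substack{n\equiv v\ (W'),\ N\le n<2N\\ |\{W(n+h_i)+b\}\cap\Prime|\ge m\\ p\mid\prod(W(n+h_i)+b)\Rightarrow p>N^{\rho}}}\chi(t-n)\ \gg_m\ \frac{\FS(\CL^{(v)})}{(\log N)^k}\,S_v,\qquad \FS(\CL^{(v)})\asymp_m\Big(\tfrac{W'}{\varphi(W')}\Big)^{\!k}.
\]
Summing over live typical $v$: since $\chi$ is --- up to a negligible minor-arc remainder and the $q_i\nmid W$, $q_i\mid Q$ modes, whose total contribution to $\sum_{v\text{ live}}S_v$ is $O_m(M^2N/w)$ by a Ramanujan-sum estimate --- slowly varying and hence equidistributed over residues mod $W'$, one gets $\sum_{v\text{ live typ}}S_v = \big(\prod_{p\mid Q,\,p>w}(1-k/p)\big)\sum_{N\le n<2N}\chi(t-n) + O(M^2N/w^{1/2})$; combining with $(W'/\varphi(W'))^k = (W/\varphi(W))^k\prod_{p\mid Q,p>w}(1-1/p)^{-k}$ makes the two products collapse to $\prod_{p\mid Q,p>w}(1-k/p)(1-1/p)^{-k} = 1+O_m(1/w)$. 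This yields the claimed bound with a suitable $\delta_1=\delta_1(m)$ (in the regime where the right side is negative the inequality is trivial).

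The step I expect to be the main obstacle is the verification of Hypothesis~\ref{hyp:MaynardHyp} and~\eqref{eq:MaynardHypExtra} for $\chi(t-n)$ restricted to a residue class: arranging, through the choice of $A$ in Lemma~\ref{le:chi-Fourier} and the modulus $W'$, that the minor-arc modes of $\chi$ stay minor-arc after passing to progressions of modulus up to $N^\theta W'$, that the few resonant moduli contribute below the $(\log N)^{-101k^2}$ threshold, and that the minor modes genuinely save in the prime sums --- which is exactly where the estimates of Section~\ref{sec:expsumest} enter. The residual bookkeeping (matching $\FS(\CL^{(v)})$, the live-fraction, and the equidistributed mass so that the normalisation comes out as $(W/\varphi(W))^k(\log N)^{-k}$ with all errors below $M^2N/w^{1/2}$) is routine but delicate.
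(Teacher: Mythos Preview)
Your proposal is correct and follows essentially the same route as the paper: extract the modulus $Q$ from Lemma~\ref{le:chi-Fourier}, split into residue classes so that the major-arc Fourier modes become slowly varying, verify Hypothesis~\ref{hyp:MaynardHyp} and~\eqref{eq:MaynardHypExtra} on each class via the exponential-sum estimates of Section~\ref{sec:expsumest}, apply Theorem~\ref{th:Maynard}, and recombine using a Ramanujan-sum bound. The only cosmetic differences are that the paper splits modulo $Q$ rather than $W' = \mathrm{lcm}(W,Q)$ (the $W$-part being already encoded in the linear forms $L_i$), and that it separates the argument into an explicit per-class statement (Proposition~\ref{prop:AlmMaynPrimesinBohrSetsModQ}) together with a recombination lemma (Lemma~\ref{le:sumoverc0} plus Lemma~\ref{le:RamanujanSum}), whereas you fold these together.
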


\begin{proof}[Proof that Proposition~\ref{prop:AlmMaynPrimesinBohrSets} implies Theorem~\ref{th:MaynPrimesinBohrSets}]
Let $k = k(m)$, $\delta_1 = \delta_1(m)$, and $\rho = \rho(m)$  be as in Proposition~\ref{prop:AlmMaynPrimesinBohrSets}. Let $\alpha_1, \dotsc, \alpha_M$ be the phases appearing in the Fourier expansion of $\chi$. By the simultaneous version of the Dirichlet approximation theorem, we can find $k$ distinct positive integers $h'_j \ll_{M, \varepsilon, m} 1$ such that
\[
\Vert \alpha_i h'_j \Vert \leq \frac{\varepsilon}{M^2} \quad \text{for every $i = 1, \dotsc, M$ and $j = 1, \dotsc, k$.}
\] 
These choices ensure that, whenever $n - n' \in \{h_1', \cdots, h_k'\}$ we have
\begin{equation}\label{eq:chi(n)-chi(n')} 
| \chi(n) - \chi(n')| \ll \varepsilon.
\end{equation}
We can assume that $w$ is so large in terms of $M, \varepsilon$ and $m$ that $|h'_j| < w/2$ for all $j$ and $w^{1/6}$ is at least $2M^2$ times the implied constant in the conclusion of Proposition~\ref{prop:AlmMaynPrimesinBohrSets}. By Proposition~\ref{prop:AlmMaynPrimesinBohrSets} we see that, for any $|t| \leq 5N$, 
\[ 
\sum_{\substack{ N \leq n < 2N \\ |\{W(n+h'_i)+b\} \cap \mathbb{P}| \geq m \\ p \mid \prod_{i = 1}^{k} (W(n+h'_i)+b) \implies p \geq N^{\rho}}} \chi(t-n) \geq \delta_1 \frac{1}{(\log N)^k} \frac{W^k}{\varphi(W)^k} \left(\sum_{N \leq n < 2N} \chi(t-n) - \frac{N}{2w^{1/3}}\right).
\]
%We can assume that the main term is at least twice the error term since otherwise Theorem~\ref{th:MaynPrimesinBohrSets} is trivial.
We get that, for some $\mathcal{J} \subseteq \{1, \dotsc, k\}$ with $\#\mathcal{J} = m$, 
\[ 
\sum_{\substack{ N \leq n < 2N \\ W(n+h'_j)+b \in \mathbb{P} \text{ for each $j \in \mathcal{J}$} \\ p \mid \prod_{i = 1}^{k} (W(n+h'_i)+b) \implies p \geq N^{\rho}}} \chi(t-n) \geq \frac{\delta_1}{{k \choose m}} \cdot \frac{1}{(\log N)^k} \frac{W^k}{\varphi(W)^k} \left(\sum_{N \leq n < 2N} \chi(t-n) -\frac{N}{2w^{1/3}}\right).
\]
Let $r \in \mathcal{J}$ be such that $h'_r$ is the minimal among $h'_j$ with $j \in \mathcal{J}$. We take $h_1, \dots, h_{k-1}$ to be any choice (unique up to permutation) such that
\[
\begin{split}
\{h_i \colon i = 1, \dotsc, m-1\} &= \{h_i'-h'_r \colon i \in \mathcal{J}\setminus \{r\}\} \\ \text{and} \quad \{h_i \colon i = m, \dotsc, k-1\} &= \{h_i'-h'_r \colon i \in \{1, \dotsc, k\} \setminus \mathcal{J}\}.
\end{split}
\]
Substituting $n' = n+h_r'$, we see that
\begin{equation}
\label{eq:sumchi(t-n+h_k')}
\sum_{\substack{ N+h_r' \leq n' < 2N + h_r' \\ Wn'+b \in \mathbb{P} \\ Wn' + Wh_i + b \in \mathbb{P} \text{ for $i = 1, \dotsc, m-1$} \\ p \mid \prod_{i = m}^{k-1} (Wn' + Wh_i+ b) \implies p \geq N^{\rho}}} \chi(t-n'+h_r') \geq \frac{\delta_1}{{k \choose m}} \cdot  \frac{1}{(\log N)^k} \frac{W^k}{\varphi(W)^k} \left(\sum_{N \leq n < 2N} \chi(t-n) - \frac{N}{2w^{1/3}}\right).
\end{equation}
By \eqref{eq:chi(n)-chi(n')} we may replace the summand $\chi(t-n' + h_r')$ above by $\chi(t-n')$ using a standard sieve bound for the number of elements counted on the left hand side of~\eqref{eq:sumchi(t-n+h_k')}, getting that 
\[
\sum_{\substack{ N+h_r' \leq n' < 2N + h_r' \\ Wn'+b \in \mathbb{P} \\ Wn' + Wh_i + b \in \mathbb{P} \text{ for $i = 1, \dotsc, m-1$} \\ p \mid \prod_{i = m}^{k-1} (Wn' + Wh_i+ b) \implies p \geq N^{\rho}}} \chi(t - n') \geq \frac{\delta_1}{{k \choose m}} \cdot \frac{1}{(\log N)^k} \frac{W^k}{\varphi(W)^k} \left(\sum_{N \leq n < 2N} \chi(t-n) - \frac{N}{2w^{1/3}} + O(\varepsilon N)\right)
\]
with the implied constant depending only on $k$ and $\rho$ and thus only on $m$. Theorem~\ref{th:MaynPrimesinBohrSets} follows with $\delta_0 = \delta_1/(2{k \choose m})$ through noting that the terms with $n' \in [2N, 2N + h_r')$ on the left hand side contribute at most $M^2 h_r'$.
\end{proof}

Since Bohr sets (and in general functions with bounded Fourier complexity) are not equidistributed in arithmetic progressions, we cannot apply Maynard's theorem to the situation in Proposition~\ref{prop:AlmMaynPrimesinBohrSets} directly, but we need to be careful with our choice of the sequence $\omega_n$ to which we apply Maynard's theorem. In particular the moduli $q_i \mid Q$ in the Fourier expansion of $\chi$ in Lemma~\ref{le:chi-Fourier} are problematic, and for this reason we will split into residue classes $\pmod{Q}$.

%For $Q$ from Lemma~\ref{le:chi-Fourier}, $W = \prod_{p \leq w} p$, $(b, W) = 1$, and $|h_j| < w/2$ for each $j$, write
%\[
%\mathcal{C}_M = \{c_0 \pmod{Q} \colon (Wc_0 + Wh_i +b, Q) = 1 \text{ for every $i = 1, \dotsc, k$}\}.
%\] 
%Notice that, since $|h_i| < w/2$, by the Chinese reminder theorem
%\begin{equation}
%\label{eq:|C|est}
%|\mathcal{C}_M| = \prod_{\substack{p \mid Q \\ p \leq w}} p \cdot \prod_{p \mid Q, p > w} (p-k) = Q \prod_{p \mid Q, p > w} \left(1-\frac{k}{p}\right).
%\end{equation}

In Section~\ref{sec:MaynApplication} we shall use Maynard's theorem (Theorem~\ref{th:Maynard}) and exponential sum estimates (which we will state in Section~\ref{sec:expsumest}) to prove the following proposition.
\begin{proposition}
\label{prop:AlmMaynPrimesinBohrSetsModQ}
For any positive integer $m$, there exist a positive integer $k = k(m)$ and positive constants $\delta_1 = \delta_1(m)$, $\rho = \rho(m)$ and $A = A(m)$ such that the following holds. Let $\chi: \Z \rightarrow \R_{\geq 0}$ be a function with Fourier complexity at most $M$ for some $M \geq 1$. Let $W = \prod_{p \leq w} p$, let $(b, W) = 1$, and let $N \geq N_0(m, M, w)$ be large. Let $Q$ be from Lemma~\ref{le:chi-Fourier} corresponding to $A$.  Then, for any distinct integers $h_1, \dotsc, h_{k}$ with $|h_j| < w/2$, any $|t| \leq 5N$ and $c_0 \in \mathcal{C}_M$,
\[ 
\sum_{\substack{ N \leq n < 2N \\ n \equiv c_0 \pmod{Q} \\ |\{W(n+h_i)+b\} \cap \mathbb{P}| \geq m \\ p \mid \prod_{i = 1}^{k} (W(n+h_i)+b) \implies p \geq N^{\rho}}} \chi(t-n) \geq \delta_1 \frac{1}{(\log N)^k} \frac{W^k}{\varphi(W)^k} \frac{Q}{|\mathcal{C}_M|} \left(\sum_{\substack{N \leq n < 2N \\ n \equiv c_0 \pmod{Q}}} \chi(t-n) + O\left(\frac{N}{Qw^{10}}\right)\right),
\]
where
\[
\mathcal{C}_M = \{c_0 \pmod{Q} \colon (Wc_0 + Wh_i +b, Q) = 1 \text{ for every $i = 1, \dotsc, k$}\}.
\] 
\end{proposition}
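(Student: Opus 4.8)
The plan is to deduce the proposition from the weighted form of Maynard's theorem, Theorem~\ref{th:Maynard}, after a change of variables that absorbs the congruence condition. Writing $n = Qn' + c_0$, where we pick the representative $c_0$ of the class so that the constant terms below are positive, the sum on the left becomes $\sum_{n'} \omega_{n'}$, restricted to the $n'$ satisfying the transformed conditions, where $\omega_{n'} := \chi(t - c_0 - Qn') \geq 0$ is supported on an interval of length $\asymp N/Q$ and the sieve and primality conditions now refer to the linear forms $L_i(n') = WQ\,n' + (Wc_0 + Wh_i + b)$. Because $(b,W)=1$ and $c_0 \in \mathcal{C}_M$, the product $\prod_i L_i$ has no fixed prime divisor, so $\mathcal{L} = \{L_1,\dots,L_k\}$ is admissible, and its coefficients have size $WQ \ll (\log N)^{O_m(1)} = N^{o(1)}$. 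We fix $\delta = 1/2$ and choose $k = k(m)$ large enough, in terms of the constant $C$ of Theorem~\ref{th:Maynard}, that the output $\#(\{L_i(n')\} \cap \Prime) \geq C^{-1}\delta\log k$ forces at least $m$ of the $L_i(n')$ to be prime; with $x \asymp N/Q$ the sifting condition there then matches the one in the proposition, with $\rho = \rho(m)$.

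The heart of the matter is the verification of Hypothesis~\ref{hyp:MaynardHyp} and of condition~\eqref{eq:MaynardHypExtra} for the weights $(\omega_{n'})$. For this we expand $\chi$ via Lemma~\ref{le:chi-Fourier}, taking its parameter $A = A(m)$ large in terms of $k(m)$ --- in particular $A > 101k^2$, and larger than the logarithmic losses of the exponential-sum estimates in Section~\ref{sec:expsumest} --- so that $\chi(n) = \sum_{j=1}^{M} b_j\, e\big((Wa_j/q_j + \beta_j)n\big)$ with $|b_j| \leq M$, and the frequencies split into \emph{major} ones (those with $q_j \mid Q$) and \emph{minor} ones (those with $q_j/(q_j,Q^2) > (\log N)^A$). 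The point of the change of variables is that a major frequency now contributes the phase $e(-\beta_j Q n')$, which is almost constant over the range of $n'$ (it has minuscule derivative and completes only $(\log N)^{O(1)}$ periods), whereas a minor frequency contributes $e(-\theta_j Q n')$ with $\theta_j Q$ of reduced denominator $d_j = q_j/(WQ,q_j)$ satisfying $(\log N)^A/W < d_j \leq q_j$ and $d_j \ll N/Q$, so that $\|\theta_j Q r'\| \gg 1/d_j$ whenever $d_j \nmid r'$. Taking $r' = 1$ already shows $\sum_{n'}\omega_{n'} = M_0 + O(N/(Qw^{10}))$, where $M_0$ denotes the (real) contribution of the major frequencies.

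Hypothesis~\ref{hyp:MaynardHyp}(1) then follows by elementary geometric-sum estimates: the flat major phases equidistribute in progressions modulo $r' \leq x^\theta$ up to a power-of-$N$ error, and for each minor frequency the complete sum over a residue class modulo $r'$ has size $\ll \min(N/(Qr'), d_j)$, with genuine cancellation except when $d_j \mid r'$; summing the trivial bound over those $\ll x^\theta/d_j$ offending moduli costs only $O(NW/(Q(\log N)^{A-1}))$, which comfortably beats the target $(\log N)^{-101k^2}$. Hypothesis~\ref{hyp:MaynardHyp}(3) follows from the bounded Fourier complexity of $\chi$ alone: expanding $\sum_{n' \equiv c \pmod{r'}}\omega_{n'}$ into additive characters modulo $r'$ and using $|\sum_{n'}e(\psi n')| \ll \min(N/Q,\, 1/\|\psi\|)$ gives the pointwise bound $\ll_M N/(Qr')$, which is $\leq \frac{C_H}{r'}\sum_{n'}\omega_{n'}$ with a constant $C_H = C_H(M,w)$ as long as $\sum_{n'}\omega_{n'} = \sum_{N \leq n < 2N,\ n \equiv c_0 \pmod{Q}}\chi(t-n) \gg N/(Qw^{10})$; in the complementary (degenerate) range the lower bound asserted in the proposition is non-positive --- its $O$-term absorbs the main term --- and there is nothing to prove. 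For Hypothesis~\ref{hyp:MaynardHyp}(2) and for~\eqref{eq:MaynardHypExtra} one uses the same major/minor dichotomy, but the elementary bounds must be replaced by a Bombieri--Vinogradov estimate for primes in progressions weighted by a flat phase (major frequencies) and by a minor-arc cancellation estimate for exponential sums over primes in progressions (minor frequencies); these are exactly the estimates collected in Section~\ref{sec:expsumest}. They yield $\sum_{n',\, L_i(n') \in \Prime}\omega_{n'} = \frac{WQ}{\varphi(WQ)\log N}M_0 + O(\text{admissible error})$, and since $\frac{\varphi(W)}{W}\cdot\frac{WQ}{\varphi(WQ)} \geq 1$ this gives~\eqref{eq:MaynardHypExtra} with $\delta = 1/2$ in the non-degenerate range.

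Theorem~\ref{th:Maynard} now gives that the left-hand side of the proposition is $\gg \delta_0\,\FS(\mathcal{L})\,(\log x)^{-k}\exp(-Ck)\sum_{n'}\omega_{n'}$, and a direct computation of the singular series yields $\FS(\mathcal{L}) = \frac{W^k}{\varphi(W)^k}\cdot\frac{Q}{|\mathcal{C}_M|}\cdot(1 + o_w(1))$: the primes $p \leq w$ contribute $\prod_{p \leq w}(1-1/p)^{-k} = (W/\varphi(W))^k$ because $W$ keeps the forms from vanishing there; the primes $p \mid Q$ (all of which exceed $w$, so that the $L_i$ are distinct modulo $p$ and, since $c_0 \in \mathcal{C}_M$, never vanish) contribute $\prod_{p\mid Q,\, p > w}(1-k/p)^{-1} = Q/|\mathcal{C}_M|$; and the remaining factor $\prod_{p > w,\, p\nmid Q}(1-k/p)(1-1/p)^{-k}$ equals $1 + o_w(1)$. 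Combining this with $\log x = (1+o(1))\log N$ and $\sum_{n'}\omega_{n'} = \sum_{N \leq n < 2N,\ n \equiv c_0 \pmod{Q}}\chi(t-n)$ yields the asserted inequality with $\delta_1 = \delta_1(m)$ a fixed multiple of $\delta_0\exp(-Ck(m))$, the error term $O(N/(Qw^{10}))$ being ample slack. The main obstacle is Hypothesis~\ref{hyp:MaynardHyp}(2): obtaining, uniformly over all moduli $r' \leq x^\theta$, cancellation with a power-of-$\log N$ saving in the sums of $e(-\theta_j Q n')$ over primes $L_i(n')$ in a fixed progression at a minor-arc frequency, together with the companion Bombieri--Vinogradov statement for the flat major frequencies, and with a saving large enough to absorb both the $(\log N)^{101k^2}$ loss built into Hypothesis~\ref{hyp:MaynardHyp} and the logarithmic losses intrinsic to the estimates of Section~\ref{sec:expsumest} --- this is precisely what forces the parameter $A$ in Lemma~\ref{le:chi-Fourier} to be chosen large in terms of $k = k(m)$.
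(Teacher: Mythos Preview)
Your proposal is correct and follows essentially the same approach as the paper's proof in Section~\ref{sec:MaynApplication}: the same change of variables $n = Qn' + c_0$, the same application of Theorem~\ref{th:Maynard} with $\delta = 1/2$, the same Fourier decomposition via Lemma~\ref{le:chi-Fourier} with its major/minor split, and the same exponential-sum inputs from Section~\ref{sec:expsumest} (the paper uses Lemma~\ref{le:TypeISum} with $M=Q=h=1$ for the minor-arc part of Hypothesis~\ref{hyp:MaynardHyp}(1), which is the precise version of your ``elementary geometric-sum estimates''). One small imprecision worth noting: at primes $p \mid Q$ with $p > w$ the local factor in $\FS(\CL)$ is $(1-1/p)^{-k}$ (since $c_0 \in \mathcal{C}_M$ forces zero roots there), not $(1-k/p)^{-1}$ as you wrote, but the ratio is $1 + O(k^2/p^2)$ and is absorbed into your $1 + o_w(1)$, so the conclusion stands.
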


Notice that, since $|h_i| < w/2$, by the Chinese reminder theorem
\begin{equation}
\label{eq:|C|est}
|\mathcal{C}_M| = Q \prod_{p \mid Q, p > w} \left(1-\frac{k}{p}\right).
\end{equation}

Let us next state a similar proposition that we shall prove using Chen's theorem (Theorem~\ref{thm:chen}). 

%For $Q$ from Lemma~\ref{le:chi-Fourier}, $W = \prod_{p \leq w} p$, $(b, W) = (b+2, W) = 1$, write
%\[ \CC_C = \{c_0 \pmod Q, (W c_0 + b, Q) = (W c_0 + b + 2, Q) = 1\}. \]
%By the Chinese reminder theorem
%\[
%|\CC_C| = Q \prod_{\substack{p \mid Q \\ p > w}} \left(1-\frac{2}{p}\right).
%\]

\begin{proposition}
\label{prop:ChenPrimesinBohrSetsModQ}
Let $\chi: \Z \rightarrow \R_{\geq 0}$ be a function with Fourier complexity at most $M$ for some $M \geq 1$. Let $W = \prod_{p \leq w} p$, let $(b, W) = (b+2, W) = 1$, and let $N \geq N_0(M, w)$ be large. Let $Q$ be from Lemma~\ref{le:chi-Fourier} corresponding to some large enough $A$. Then for any $|t| \leq 5N$, and $c_0 \in \mathcal{C}_C$,
\[ 
\sum_{\substack{ N \leq n < 2N \\ n \equiv c_0 \pmod{Q} \\ Wn + b \in \Prime \\ Wn + b + 2 = P_2 \\ p \mid Wn + b + 2 \implies p \geq N^{1/100}}} \chi(t-n) \geq \delta_1 \frac{1}{(\log N)^2} \frac{W^2}{\varphi(W)^2} \frac{Q}{|\mathcal{C}_C|} \left( \sum_{\substack{N \leq n < 2N \\ n \equiv c_0 \pmod{Q}}} \chi(t-n) + O\left( \frac{N}{Q(\log N)^{100}} \right) \right),
\]
for some absolute constant $\delta_1 > 0$, where
\[
\CC_C = \{c_0 \pmod Q, (W c_0 + b, Q) = (W c_0 + b + 2, Q) = 1\}.
\]
\end{proposition}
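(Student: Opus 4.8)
The plan is to mimic the proof of Proposition~\ref{prop:AlmMaynPrimesinBohrSetsModQ} from Section~\ref{sec:MaynApplication}, replacing Maynard's theorem by Chen's theorem (Theorem~\ref{thm:chen}) and Hypothesis~\ref{hyp:MaynardHyp} by Hypothesis~\ref{hyp:ChenHyp}. First I would apply Lemma~\ref{le:chi-Fourier} to $\chi$, choosing the parameter $A$ large in terms of the exponent losses in the exponential sum estimates of Section~\ref{sec:expsumest}; this produces the expansion $\chi(n)=\sum_{i=1}^M b_i e((Wa_i/q_i+\beta_i)n)$ with $|b_i|\le M$, together with the modulus $Q\le(\log N)^B$ satisfying the dichotomy that each $q_i$ either divides $Q$ or has $q_i/(q_i,Q^2)>(\log N)^A$. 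Fixing $c_0\in\CC_C$ and writing $n=c_0+Qn'$, the sum on the left-hand side becomes a sum over $n'$ ranging in an interval of length $\asymp N/Q$ attached to the pair of linear forms $L_1(n')=WQn'+(Wc_0+b)$ and $L_2(n')=WQn'+(Wc_0+b+2)$; these are admissible (their product has a fixed prime divisor only possibly at $2$, which is excluded since $2\mid W$ and $(b,W)=1$ force $L_1$ odd) and have coefficients of size $N^{o(1)}$. I would then apply Theorem~\ref{thm:chen} with $x:=N/Q$ and the non-negative weights $\omega_{n'}:=\chi(t-c_0-Qn')$.

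Several routine points then need to be checked. The discrepancy between the true range of $n'$ and $[x,2x)$ costs only $O_M(1)$ terms since $\|\chi\|_\infty\le M^2$; the error term $O(x^{0.9}\max_{n'}\omega_{n'})\ll(N/Q)^{0.9}M^2$ in Theorem~\ref{thm:chen} is $O(N/(Q(\log N)^{100}))$ for $N$ large; and the threshold $p\ge x^{1/10}$ appearing in the conclusion of Theorem~\ref{thm:chen} is more restrictive than the $p\ge N^{1/100}$ we want (since $(N/Q)^{1/10}\ge N^{1/100}$), so it only shrinks the set being counted. It then remains to identify $\FS(\CL)$: computing the local factor at each prime using $(b,W)=(b+2,W)=(Wc_0+b,Q)=(Wc_0+b+2,Q)=1$ gives $\FS(\CL)=\big(\prod_{p>w}(1-2/p)(1-1/p)^{-2}\big)\frac{W^2}{\varphi(W)^2}\frac{Q}{|\CC_C|}$, where $\prod_{p>w}(1-2/p)(1-1/p)^{-2}=\prod_{p>w}\big(1-\frac1{(p-1)^2}\big)$ lies between the absolute constant $\prod_{p>2}\big(1-\frac1{(p-1)^2}\big)>0$ and $1$. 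Thus, once Hypothesis~\ref{hyp:ChenHyp} is verified for $(\omega_{n'})$, Theorem~\ref{thm:chen} delivers the proposition with $\delta_1$ an absolute multiple of Chen's constant.

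The core of the argument is the verification of Hypothesis~\ref{hyp:ChenHyp}(1)--(3) for $\omega_{n'}=\sum_{i=1}^M b_i' e(\gamma_i n')$, where $\gamma_i=-(Wa_i/q_i+\beta_i)Q$ and $|b_i'|\le M$. I would split $\{1,\dots,M\}$ into $\mathcal{I}_0=\{i:q_i\mid Q\}$ and $\mathcal{I}_1=\{i:q_i\nmid Q\}$. For $i\in\mathcal{I}_0$ the phase $e((Wa_i/q_i)n)$ is constant on the class $n\equiv c_0\pmod Q$, so after extracting that constant this ``major'' part of $\omega$ is a linear combination of slowly oscillating phases $e(-\beta_i Qn')$ with $|\beta_i Q|\ll(\log N)^{O_B(1)}/N$; by partial summation over $(\log N)^{O_B(1)}$ subintervals of $[x,2x)$ the relevant estimates reduce to the corresponding ones for constant weights and linear forms with $x^{o(1)}$ coefficients, and these are exactly the classical ingredients of Chen's sieve: Bombieri--Vinogradov for primes with well-factorable weights (in both forms of $\lambda$ allowed in Hypothesis~\ref{hyp:ChenHyp}(1)) for (1); the equidistribution in arithmetic progressions of the almost-primes in $B_1,B_2$ that underlies the classical proof of Chen's theorem for (2); and the density evaluation~\eqref{eq:density-Bj} together with trivial bounds for (3), all with a power of $\log x$ to spare. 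For $i\in\mathcal{I}_1$ the dichotomy of Lemma~\ref{le:chi-Fourier} forces the rational part of $\gamma_i$ to have reduced denominator $>(\log N)^A/W\ge(\log N)^{A-1}$, so $e(\gamma_i n')$ is a genuine minor-arc character; its contribution to the left-hand sides of Hypothesis~\ref{hyp:ChenHyp}(1)--(3) is a sum over moduli $r\le x^{1/2}$ of well-factorably weighted bilinear exponential sums over primes --- and over the sets $B_j$, which I would first reduce to sums over primes via the $p_1p_2p_3$-structure exactly as in Chen's switching device --- and by the exponential sum estimates of Section~\ref{sec:expsumest} these are $\ll x/(\log x)^C$ for any fixed $C$ once $A$ is large enough, the $r$-summation and the reduction costing at most a fixed power of $x$; this comfortably beats the admissible error $\asymp N/(Q(\log N)^{10})$.

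The step I expect to be the main obstacle is precisely this minor-arc analysis underlying Hypothesis~\ref{hyp:ChenHyp}(1)--(2): one must extract cancellation from exponential sums over primes and over Chen's almost-prime sets $B_j$ twisted by an additive character whose denominator is only a power of $\log N$, and do so uniformly after inserting well-factorable weights and summing the modulus up to level $x^{1/2}$. This is what forces the careful choice of $A$ in Lemma~\ref{le:chi-Fourier}: the exponential sum estimates of Section~\ref{sec:expsumest} are designed to supply exactly the required power-of-log saving, and $A$ is taken large enough to convert it into the bound $(\log x)^{-10}$ demanded by Hypothesis~\ref{hyp:ChenHyp}. A secondary, bookkeeping point is to keep all implied constants, and in particular the final $\delta_1$, independent of $M$ and $w$ --- this works because Chen's constant $\delta_0$ is absolute and the residual local factor $\prod_{p>w}(1-1/(p-1)^2)$ is bounded below by the absolute constant $\prod_{p>2}(1-1/(p-1)^2)$.
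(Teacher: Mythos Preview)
Your proposal is correct and follows essentially the same approach as the paper. You set up the same linear forms $L_i(n')=WQn'+(Wc_0+b+2(i-1))$, the same weights $\omega_{n'}=\chi(t-c_0-Qn')$ with $x=N/Q$, expand $\chi$ via Lemma~\ref{le:chi-Fourier}, split according to whether $q_i\mid Q$ (major arc, handled by partial summation and Bombieri--Vinogradov type input, which is exactly what Lemmas~\ref{le:MajorArcsPrimes} and~\ref{le:MajorArcsTypeII} encode) or $q_i\nmid Q$ (minor arc, handled by the estimates of Section~\ref{sec:expsumest}), and compute $\FS(\CL)$ the same way; the paper's Section~\ref{sec:ChenApplication} does precisely this, citing the specific lemmas rather than describing the mechanism.
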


%\begin{proof}[Proof that Proposition~\ref{prop:ChenPrimesinBohrSetsModQ} implies Proposition~\ref{prop:ChenPrimesinBohrSets}]
%In view of Lemma~\ref{le:sumoverc0}, it suffices to verify \eqref{eq:sumoverc_0inC} for $\CQ = \CC$. This follows from the same argument used in treating Maynard primes.
%\end{proof}

To show that Propositions \ref{prop:AlmMaynPrimesinBohrSetsModQ} and~\ref{prop:ChenPrimesinBohrSetsModQ} imply Proposition~\ref{prop:AlmMaynPrimesinBohrSets} and Theorem~\ref{th:ChenPrimesinBohrSets}, we use the following lemma allowing us to sum over all the residue classes in $\mathcal{C}_M$ and $\mathcal{C}_C$.

\begin{lemma}
\label{le:sumoverc0}
Let $\chi$ be a function of Fourier complexity at most $M$ for some $M \geq 1$, and let $N, Q$ be positive integers with $N \geq 2Q^2$. Let also $\mathcal{Q}$ be a collection of residue classes modulo $Q$ such that, for all $1 \neq q \mid Q$ and $(a,q) = 1$, one has
\begin{equation}
\label{eq:sumoverc_0inC}
\sum_{c_0 \in \mathcal{Q}} e\left(\frac{a }{q}c_0 \right) = O\left(\eta |\mathcal{Q}|\right),
\end{equation}
for some $\eta > 0$. Then
\begin{equation}
\label{eq:sumoverc_0claim}
\frac{Q}{|\mathcal{Q}|} \sum_{\substack{c_0 \in \mathcal{Q}}} \sum_{\substack{N \leq n < 2N \\ n \equiv c_0 \pmod{Q}}} \chi(t-n) \geq \sum_{\substack{N \leq n < 2N}} \chi(t-n) + O\left( \eta M^2 N +  Q M^2 N^{1/2} \right).
\end{equation}
\end{lemma}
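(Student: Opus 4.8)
The plan is to expand $\chi$ via its Fourier complexity as $\chi(t-n) = \sum_{i=1}^M b_i e(\alpha_i(t-n))$ and exchange the order of summation, so that the left-hand side of \eqref{eq:sumoverc_0claim} becomes $\sum_{i=1}^M b_i e(\alpha_i t) \cdot \frac{Q}{|\mathcal{Q}|}\sum_{c_0 \in \mathcal{Q}} \sum_{N \leq n < 2N,\, n \equiv c_0 (Q)} e(-\alpha_i n)$. For each phase $\alpha_i$ I would approximate $\alpha_i$ by a rational $a_i/q_i$ with $q_i \leq Q$ (or note it is far from every such rational) using Dirichlet approximation, and split the inner sum over $n$ according to its residue $n \equiv c_0 \pmod Q$. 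Writing $n = c_0 + Qr$, the sum over $r$ in the relevant range is a geometric-type sum which, on the minor-arc phases (those $\alpha_i$ not close to any $a/q$ with $q \mid Q$), contributes an error term of size $O(N^{1/2})$ per residue class after summing; multiplied by $Q/|\mathcal{Q}| = O(1)$ and by the number of classes $|\mathcal{Q}| = O(Q)$ and by $M$ phases with $|b_i| \leq M$, this is the $O(QM^2 N^{1/2})$ term.

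For the major-arc phases, i.e. those $\alpha_i$ with $\|\alpha_i - a_i/q_i\| $ small and $q_i \mid Q$, the key point is that $e(-\alpha_i n)$ restricted to $n \equiv c_0 \pmod Q$ depends (up to a negligible error coming from the small $\beta_i = \alpha_i - a_i/q_i$) only on $c_0$, through the factor $e(-a_i c_0/q_i)$. Summing over $c_0 \in \mathcal{Q}$ then produces exactly the character-type sum $\sum_{c_0 \in \mathcal{Q}} e(-a_i c_0 / q_i)$. If $q_i = 1$ this is $|\mathcal{Q}|$ and reconstitutes the full sum $\sum_{N \leq n < 2N} \chi(t-n)$ after reassembling the $b_i e(\alpha_i t)$; if $q_i > 1$, hypothesis \eqref{eq:sumoverc_0inC} bounds it by $O(\eta|\mathcal{Q}|)$, and tracking this through (again against $|b_i| \leq M$ summed over $M$ phases, the trivial bound $N$ for the $n$-sum, and $Q/|\mathcal{Q}| = O(1)$) gives the $O(\eta M^2 N)$ term. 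The requirement $N \geq 2Q^2$ is what guarantees that each residue class $c_0 \pmod Q$ contains $N/Q + O(1)$ values of $n$ in $[N,2N)$, so the edge effects from incomplete final blocks are absorbed into $O(QM^2 N^{1/2})$, in fact into $O(QM^2)$.

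The main obstacle is bookkeeping rather than conceptual: one must carefully separate, for each of the $M$ phases, the genuinely major-arc case $q_i \mid Q$ from the rest, handle the linear drift $\beta_i$ (using $|\beta_i| \cdot N$ small, which requires choosing the Dirichlet denominators and the range appropriately so that $e(-\beta_i(c_0+Qr))$ is nearly constant, or more simply absorbing $\beta_i$-errors by partial summation), and verify that the cross terms and error terms all collapse to the two stated error sizes with implied constants depending only on absolute quantities. One should also double-check that a phase $\alpha_i$ close to a rational with $q_i \nmid Q$ genuinely lands in the minor-arc bound; this is where the structure of $Q$ from Lemma~\ref{le:chi-Fourier} (that every $q_i$ either divides $Q$ or is ``$(\log N)^A$-far'' from doing so) is used, ensuring the geometric sum over $r$ does not have a small denominator and hence is genuinely $O(N^{1/2})$-bounded, or rather $O(Q)$-bounded per class which after summing yields the claimed error. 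Once these pieces are in place, \eqref{eq:sumoverc_0claim} follows by collecting the main term from the $q_i = 1$ phases and the three sources of error.
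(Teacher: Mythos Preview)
Your overall strategy --- Fourier-expand $\chi$, classify each phase $\alpha_i$ by its rational approximation, extract the main term from the $q_i=1$ phases, and apply hypothesis~\eqref{eq:sumoverc_0inC} to the phases with $q_i>1$ and $q_i\mid Q$ --- is exactly what the paper does. The gap is in your handling of the remaining case $q_i\nmid Q$.

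You write that ``this is where the structure of $Q$ from Lemma~\ref{le:chi-Fourier} \ldots is used'', invoking the dichotomy that every $q_i$ either divides $Q$ or satisfies $q_i/(q_i,Q^2)>(\log N)^A$. But Lemma~\ref{le:sumoverc0} does not assume $Q$ arises from Lemma~\ref{le:chi-Fourier}; in its statement $Q$ is an arbitrary positive integer with $N\geq 2Q^2$, and $\chi$ is an arbitrary function of Fourier complexity at most $M$. That dichotomy is simply not available, and your proof as written does not go through for general $Q$.

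The paper handles this without any extra input by choosing the Dirichlet level to be $N^{1/2}$: one takes $q_i\leq N^{1/2}$ with $|\alpha_i-a_i/q_i|\leq 1/(q_iN^{1/2})$. When $q_i\nmid Q$ one has $q_i/(q_i,Q)>1$, and Lemma~\ref{le:GeomSerII} bounds the sum over $n\equiv c_0\pmod Q$ by $O(q_i/(q_i,Q))\leq O(q_i)\leq O(N^{1/2})$; after multiplying by $Q/|\mathcal Q|\cdot|\mathcal Q|=Q$ and summing over the $M$ phases with $|b_i|\leq M$, this is precisely the $O(QM^2N^{1/2})$ term. The same Dirichlet level makes the $q_i=1$ case clean as well, since then $|\beta_i|\leq N^{-1/2}$ and Lemma~\ref{le:GeomSerI} gives error $O(N^{1/2})$ per class. (Your parenthetical ``or rather $O(Q)$-bounded per class'' is in fact the right thought, and since $Q\leq N^{1/2}$ it would also suffice --- but it follows from $q_i\nmid Q$ alone via Lemma~\ref{le:GeomSerII}, not from Lemma~\ref{le:chi-Fourier}.)
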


\begin{proof}
By Definition~\ref{def:Fourier-complexity}, we have the Fourier expansion 
\[
\chi(t-n) = \sum_{i=1}^M b_i e\left(\alpha_i (t-n)\right),
\]
for some $|b_i| \leq M$ and $\alpha_i \in \R/\Z$. For each $1 \leq i \leq M$, we may find integers $0 \leq a_i < q_i \leq N^{1/2}$ with $(a_i,q_i) = 1$ such that $|\alpha_i - a_i/q_i| \leq 1/(q_iN^{1/2})$. 

Let us first consider the contribution of those $i$ with $q_i = 1$ to the left hand side of~\eqref{eq:sumoverc_0claim}. This contribution is, using Lemma~\ref{le:GeomSerI},
\[
\begin{split}
\Sigma_1 &:= \frac{Q}{|\mathcal{Q}|} \sum_{\substack{c_0 \in \mathcal{Q}}}  \sum_{\substack{1 \leq i \leq M \\ q_i = 1}} b_i \sum_{\substack{N \leq n < 2N \\ n \equiv c_0 \pmod{Q}}}  e(\alpha_i(t-n)) \\
& = \frac{Q}{|\mathcal{Q}|} \sum_{\substack{c_0 \in \mathcal{Q}}} \sum_{\substack{1 \leq i \leq M \\ q_i = 1}} b_i  \left(\frac{1}{Q} \sum_{\substack{N \leq n < 2N}}  e(\alpha_i(t-n)) + O(N^{1/2})\right) \\
& =\sum_{\substack{1 \leq i \leq M \\ q_i = 1}} b_i  \sum_{\substack{N \leq n < 2N}}  e\left(\alpha_i(t-n)\right) + O(Q M^2 N^{1/2}).
\end{split}
\]
By Lemma~\ref{le:GeomSerII} we can extend the sum to go over all $1 \leq i \leq M$, at the cost of an error of size $M^2 \max_i q_i \ll M^2 N^{1/2}$, getting
\[
\begin{split}
\Sigma_1  &= \sum_{\substack{N \leq n < 2N}} \sum_{1 \leq i \leq M} b_i  e\left( \alpha_i (t-n)\right) + O\left( Q M^2 N^{1/2} \right) \\
&= \sum_{\substack{N \leq n < 2N}} \chi(t-n) + O\left( QM^2 N^{1/2} \right).
\end{split}
\]

Hence we are finished if we can show that, for each $i$ such that with $q_i > 1$, we have
\begin{equation}
\label{eq:sumc_0claim}
\frac{Q}{|\mathcal{Q}|} \sum_{\substack{c_0 \in \mathcal{Q}}} \sum_{\substack{N \leq n < 2N \\ n \equiv c_0 \pmod{Q}}} e\left( \alpha_i(t-n) \right) = O\left( \eta N + Q N^{1/2} \right).
\end{equation}
In case $q_i \nmid Q$, we have $q_i/(q_i, Q) > 1$, and thus, by Lemma~\ref{le:GeomSerII}, the left hand side is $O(Q q_i) = O(Q N^{1/2})$.

In case $q_i \mid Q$, writing $\alpha_i = a_i/q_i + \beta_i$, the sum over $n$ on the left hand side of~\eqref{eq:sumc_0claim} equals
\[
\begin{split}
&e\left(\frac{a_i}{q_i}(t-c_0) \right) \sum_{\substack{N \leq n < 2N \\ n \equiv c_0 \pmod{Q}}} e\left(\beta_i(t-n) \right) \\
&=e\left(\frac{a_i}{q_i}(t-c_0) \right) \cdot \frac{1}{Q}\left( \sum_{\substack{N \leq n < 2N}} e\left(\beta_i(t-n) \right)\right) + O(N^{1/2}) 
\end{split}
\]
by Lemma~\ref{le:GeomSerI}. Hence the left hand side of~\eqref{eq:sumc_0claim} equals
\[
\begin{split}
\sum_{\substack{N \leq n < 2N}} e\left(\beta_i(t-n) + \frac{a_i}{q_i}t\right) \frac{1}{|\mathcal{Q}|} \sum_{\substack{c_0 \in \mathcal{Q}}} e\left(-\frac{a_i}{q_i} c_0 \right) + O(Q N^{1/2}), 
\end{split}
\]
and~\eqref{eq:sumc_0claim} follows from the assumption~\eqref{eq:sumoverc_0inC}.
\end{proof}

In order to show that~\eqref{eq:sumoverc_0inC} holds for $\mathcal{Q} = \mathcal{C}_M$  and for $\mathcal{Q} = \mathcal{C}_C$, we shall use the following elementary lemma related to a certain modification of Ramanujan sums.
\begin{lemma}
\label{le:RamanujanSum}
Let $q$ be a natural number, $(a, q) = 1$ and let $P(n)$ be a polynomial with integer coefficients. Write $\rho(n) = \#\{k \pmod{n} \colon P(k) \equiv 0 \pmod{n}\}$. Then
\[
\left|\sum_{\substack{n \pmod{q} \\ (P(n), q) = 1}} e\left(\frac{an}{q}\right)\right| \leq \rho(q).
\]
\end{lemma}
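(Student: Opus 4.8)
The plan is to reduce to the prime-power case by multiplicativity and then handle prime powers by an elementary inclusion–exclusion. First I would observe that if $q = q_1 q_2$ with $(q_1,q_2)=1$, then writing $a/q \equiv a_1/q_1 + a_2/q_2 \pmod 1$ via the Chinese remainder theorem (with $(a_i,q_i)=1$) and running $n$ over $\Z/q\Z$ as a pair $(n_1,n_2) \in \Z/q_1\Z \times \Z/q_2\Z$, the condition $(P(n),q)=1$ factors as $(P(n_1),q_1)=1$ and $(P(n_2),q_2)=1$. Hence the sum over $n \pmod q$ factors as a product of the two analogous sums over $n_i \pmod{q_i}$, and likewise $\rho(q) = \rho(q_1)\rho(q_2)$ by the standard multiplicativity of $\rho$. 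So it suffices to prove the bound when $q = p^k$ is a prime power.

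For $q = p^k$, the plan is to detect the coprimality condition $(P(n),p^k)=1$, which (since $p$ is prime) is equivalent to $(P(n),p)=1$, i.e. $p \nmid P(n)$. Write
\[
\sum_{\substack{n \pmod{p^k} \\ p \nmid P(n)}} e\!\left(\frac{an}{p^k}\right) = \sum_{n \pmod{p^k}} e\!\left(\frac{an}{p^k}\right) - \sum_{\substack{n \pmod{p^k} \\ p \mid P(n)}} e\!\left(\frac{an}{p^k}\right).
\]
The first sum vanishes because $(a,p^k)=1$ and $k \geq 1$ (a complete exponential sum of a nontrivial additive character). For the second sum, I would split the range of $n$ according to its residue $r \pmod p$ with $P(r) \equiv 0 \pmod p$: for each such $r$ (there are exactly $\rho(p)$ of them, and note $\rho(p^k) \ge \rho(p)$ is not quite what we want — we only need $\rho(p^k) \ge \rho(p)$, which holds since any root mod $p$ lifts to at least... actually we only need the cruder bound), the inner sum over $n \equiv r \pmod p$ is again a complete sum of $e(a n / p^k)$ over an arithmetic progression mod $p$, which for $k \geq 2$ still vanishes and for $k = 1$ contributes a single term of modulus $1$. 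Thus the whole second sum has modulus at most $\rho(p)$ when $k=1$ and modulus $0$ when $k \geq 2$; in either case it is at most $\rho(p^k)$ since $\rho$ is non-decreasing along $p \mid p^k$ in the sense that $\rho(p^k) \geq \rho(p) \cdot [k \ge 1]$... the clean statement is simply $\rho(p) \le \rho(p^k)$ when $\rho(p) \ge 1$, which is Hensel-type lifting, but we may also just note $\rho(p^k) \geq \rho(p)$ fails in general, so instead I would bound the $k \ge 2$ case by $0 \le \rho(p^k)$ trivially and the $k=1$ case by $\rho(p) = \rho(p^1)$ exactly.

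The only mild subtlety — and the step I would be most careful about — is the bookkeeping of which complete exponential sums vanish: one must use that for $k \geq 2$ and any fixed residue $r \pmod p$, the sum $\sum_{n \equiv r (p),\, n (p^k)} e(an/p^k)$ is a complete geometric sum over a progression of common difference $p$ and length $p^{k-1}$, hence equals $e(ar/p^k)\sum_{j \pmod{p^{k-1}}} e(apj/p^k) = e(ar/p^k)\sum_{j \pmod{p^{k-1}}} e(aj/p^{k-1}) = 0$ since $(a,p^{k-1})=1$ and $k-1\ge 1$. Once this is in hand the triangle inequality gives the stated bound $\rho(q)$, and the multiplicative reduction propagates it to all $q$.
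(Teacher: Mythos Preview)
Your argument is correct, but it takes a different route from the paper. The paper does not reduce to prime powers at all: it applies M\"obius inversion directly over divisors of $q$,
\[
\sum_{\substack{n \pmod q \\ (P(n),q)=1}} e\!\left(\frac{an}{q}\right) = \sum_{d \mid q} \mu(d) \sum_{\substack{n \pmod q \\ d \mid P(n)}} e\!\left(\frac{an}{q}\right),
\]
and then, for each $d \mid q$, splits the inner sum over the $\rho(d)$ residue classes $n \equiv x_i \pmod d$; the resulting geometric series $\sum_{k \pmod{q/d}} e(ak/(q/d))$ vanishes unless $d=q$, leaving only $\mu(q)\sum_{i \le \rho(q)} e(ax_i/q)$, of modulus at most $\rho(q)$. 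This is a one--line proof with no CRT bookkeeping, and it makes the vanishing for non-squarefree $q$ immediate via $\mu(q)=0$. Your approach recovers the same phenomenon (the sum vanishes when any prime-power factor $p^k$ has $k \ge 2$) but only after the CRT factoring; its one advantage is that it avoids the slightly opaque step of recognising which $d$ survive, at the cost of the extra reduction. Your self-correction is right: you never need $\rho(p) \le \rho(p^k)$, since for $k \ge 2$ you get the exact value $0$, and the product over prime powers is then trivially $\le \rho(q)$.
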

\begin{proof}
By M\"obius inversion,
\[
\sum_{\substack{n \pmod{q} \\ (P(n), q) = 1}} e\left(\frac{an}{q}\right) = \sum_{\substack{d \mid q}} \mu(d) \sum_{\substack{n \pmod{q} \\ P(n) \equiv 0 \pmod{d}}} e\left(\frac{an}{q}\right).
\]
For a fixed $d \mid q$, write $x_1, \dotsc, x_{\rho(d)}$ for the roots of $P(n) \pmod{d}$. Then
\begin{equation}
\label{eq:sumP(n)=0modd}
\sum_{\substack{n \pmod{q} \\ P(n) \equiv 0 \pmod{d}}} e\left(\frac{an}{q}\right) = \sum_{i = 1}^{\rho(d)} \sum_{\substack{n \pmod{q} \\ n \equiv x_i \pmod{d}}} e\left(\frac{an}{q}\right) = \sum_{i = 1}^{\rho(d)} e\left(\frac{ax_i}{q}\right) \sum_{\substack{k \pmod{q/d}}} e\left(\frac{ak}{q/d}\right),
\end{equation}
where we have written $n = x_i + k d$.
The last sum vanishes unless $d = q$ in which case~\eqref{eq:sumP(n)=0modd} has absolute value at most $\rho(q)$, and the claim follows.
\end{proof}

\begin{proof}[Proof that Proposition \ref{prop:AlmMaynPrimesinBohrSetsModQ} implies Proposition~\ref{prop:AlmMaynPrimesinBohrSets}]
We may assume that $w$ is large enough in terms of $m$, since otherwise the error term dominates and the claim is trivial. By Lemma~\ref{le:sumoverc0} it remains to show ~\eqref{eq:sumoverc_0inC} for $\mathcal{Q} = \mathcal{C}_M$ and $1 \neq q \mid Q$ with $\eta = w^{-1/2}$. Writing $R(n) = \prod_{i = 1}^k (Wn + Wh_i + b)$, \eqref{eq:sumoverc_0inC} reduces to
\begin{equation}
\label{eq:sumoverc_0}
\sum_{\substack{c_0 \pmod{Q} \\ (R(c_0), Q) = 1}} e\left(\frac{a }{q}c_0 \right) = O\left(\frac{Q}{w^{1/2}} \prod_{\substack{p \mid Q \\ p > w}} \left(1-\frac{k}{p}\right)\right).
\end{equation}
We can uniquely decompose $Q = q q' Q'$, where $(Q', q) = 1$ and $p \mid q' \implies p \mid q$. Then, when $c_1$ and $c_2$ run respectively through residue classes $\pmod{q'Q'}$ and $\pmod{q}$, $c_1 q + c_2 Q'$ runs through residue classes $\pmod{Q}$. Writing $c_0$ in this form, the left hand side of~\eqref{eq:sumoverc_0} becomes
\begin{equation}
\label{eq:sumoverc_0LHS}
\sum_{\substack{c_1 \pmod{q'Q'} \\ (R(c_1q), Q') = 1}} \sum_{\substack{c_2 \pmod{q} \\ (R(c_2 Q'), q) = 1}} e\left(\frac{aQ'}{q} c_2\right) 
\end{equation}

Since $R(n)$ is always co-prime to $W$, Lemma~\ref{le:RamanujanSum} implies that the inner sum in~\eqref{eq:sumoverc_0LHS} vanishes unless $(q, W) = 1$. Furthermore in this case it has absolute value at most 
\[ \#\{c_2 \pmod{q}: R(c_2Q') \equiv 0 \pmod{q} \} \leq  k^{\Omega(q)} \leq q^{1/3}, \]
since $p \mid q \implies p > w$ and $w$ is large enough. Hence we obtain that the absolute value of~\eqref{eq:sumoverc_0LHS} is at most
\[
\begin{split}
\sum_{\substack{c_1 \pmod{q'Q'} \\ (R(c_1q), Q') = 1}} q^{1/3} =  q' q^{1/3} \sum_{\substack{c_1 \pmod{Q'} \\ (R(c_1q), Q') = 1}} 1.
\end{split}
\]
By the definition of $R(n)$, $R(n)$ is always co-prime to $W = \prod_{p \leq w} p$, and for every $p >w$, $R(n) \equiv 0 \pmod{p}$ has $k$ incongruent solutions $\pmod{p}$ (since $|h_i| < w/2$ for every $i$). Hence the absolute value of~\eqref{eq:sumoverc_0LHS} is at most
\[
q^{1/3} q' Q' \prod_{p\mid Q', p>w} \left(1 - \frac{k}{p} \right) \leq \frac{Q}{q^{1/2}} \prod_{p \mid Q, p > w} \left(1-\frac{k}{p}\right),
\]
and~\eqref{eq:sumoverc_0} follows since $q > 1$ and $(q, W) = 1$, so that $q > w$.
\end{proof}

\begin{proof}[Proof that Proposition \ref{prop:ChenPrimesinBohrSetsModQ} implies Theorem~\ref{th:ChenPrimesinBohrSets}]
By Lemma~\ref{le:sumoverc0} it remains to show ~\eqref{eq:sumoverc_0inC} for $\mathcal{Q} = \mathcal{C}_C$ and $1 \neq q \mid Q$ with $\eta = w^{-1/2}$. This time we take $R(n) = (Wn + b)(Wn+b+2)$, and the claim follows exactly as in the previous proof, with $k = 2$.
\end{proof}

\section{Exponential sum estimates}
\label{sec:expsumest}
In this section we state exponential sum estimates that we will use in proofs of Propositions~\ref{prop:AlmMaynPrimesinBohrSetsModQ} and~\ref{prop:ChenPrimesinBohrSetsModQ}. Since the proofs closely follow previous works, we postpone them to Appendix~\ref{app:Expsums}.

\subsection{Major arc estimates}
\begin{lemma}
\label{le:MajorArcsPrimes}
Let $C_1, C_2 \geq 1$ and $\varepsilon > 0$. There exists a constant $x_0 = x_0(C_1, C_2, \varepsilon)$ such that the following holds. Let $Q \leq (\log x)^{C_1}$ and let $q \geq 1$ and $a$ be integers such that $q \mid Q$ and $(a, q) = 1$. Assume that $|\alpha - \frac{a}{q}| \leq (\log x)^{C_1}/x$. Then, for every $x \geq x_0$,
\[
\sum_{r \leq x^{1/2-\varepsilon}} \max_{(c, rQ) = 1} \Biggl| \sum_{\substack{x \leq p < 2x \\ p \equiv c \pmod{rQ}}} e\left(\alpha p \right) - \frac{Q}{\varphi(rQ)} \sum_{\substack{x \leq n < 2x \\ n \equiv c \pmod{Q}}} \frac{e(\alpha n)}{\log n}\Biggr| \leq \frac{x}{Q(\log x)^{C_2}}.
\]
\end{lemma}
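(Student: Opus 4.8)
The statement is a Bombieri--Vinogradov type estimate for primes twisted by an additive character $e(\alpha p)$, along progressions to moduli $rQ$ with $Q$ small and fixed. The strategy is to reduce it to the standard Bombieri--Vinogradov theorem (or rather its twisted variant, which follows from the usual Vinogradov--Korobov technology combined with the large sieve) by removing the twist $e(\alpha n)$ on the major arc via summation by parts and exploiting that $\alpha$ is close to a rational $a/q$ with $q \mid Q$. First I would write $\alpha = a/q + \beta$ with $|\beta| \leq (\log x)^{C_1}/x$. On a progression $n \equiv c \pmod{rQ}$ with $(c,rQ)=1$, and since $q \mid Q \mid rQ$, the value of $e(an/q)$ is constant: it equals $e(ac/q)$. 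Hence $e(\alpha n) = e(ac/q) e(\beta n)$ on that progression, and the factor $e(ac/q)$ (of modulus $1$) pulls out of both the prime sum and the comparison sum, so it disappears from the difference. The same holds for the inner sum $\sum_{n \equiv c \pmod Q} e(\alpha n)/\log n$ after noting $e(an/q)=e(ac/q)$ there too. Thus it suffices to prove the bound with $e(\alpha p)$ replaced by $e(\beta p)$, where now $\beta$ is genuinely small, $|\beta| \leq (\log x)^{C_1}/x$.

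Second, I would handle the smooth twist $e(\beta n)$ by partial summation in $n$ over the dyadic range $[x,2x)$: since $\beta x \ll (\log x)^{C_1}$, the derivative of $n \mapsto e(\beta n)$ is $O(|\beta|) = O((\log x)^{C_1}/x)$, so writing
\[
\sum_{\substack{x \le p < 2x \\ p \equiv c \,(rQ)}} e(\beta p) - \frac{Q}{\varphi(rQ)} \sum_{\substack{x \le n < 2x \\ n \equiv c\,(Q)}} \frac{e(\beta n)}{\log n}
\]
as a Riemann--Stieltjes integral against the discrepancy
\[
E(y;rQ,Q,c) := \sum_{\substack{x \le p < y \\ p \equiv c\,(rQ)}} 1 - \frac{Q}{\varphi(rQ)} \sum_{\substack{x \le n < y \\ n \equiv c\,(Q)}} \frac{1}{\log n},
\]
partial summation bounds the twisted discrepancy by $(1 + |\beta| x) \sup_{x \le y \le 2x} |E(y;rQ,Q,c)| \ll (\log x)^{C_1} \sup_y |E(y;rQ,Q,c)|$. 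Taking the maximum over $c$ and summing over $r \le x^{1/2-\varepsilon}$, it remains to prove
\[
\sum_{r \le x^{1/2-\varepsilon}} \max_{(c,rQ)=1} \sup_{x \le y \le 2x} |E(y;rQ,Q,c)| \ll \frac{x}{Q (\log x)^{C_2 + C_1}},
\]
i.e. a Bombieri--Vinogradov estimate for the progression modulus $rQ$ with an extra saving of $Q^{-1}$ and an arbitrary power of $\log x$. This is standard: the main term $\frac{Q}{\varphi(rQ)}\sum_{n \equiv c\,(Q), x\le n<y} 1/\log n$ is, up to a negligible error, $\frac{1}{\varphi(rQ)}\sum_{x\le n<y} 1/\log n = \mathrm{li}(y)/\varphi(rQ) - \mathrm{li}(x)/\varphi(rQ)$ (using $(c,Q)=1$ so the count of $n \equiv c\,(Q)$ is $(y-x)/Q + O(1)$, distributed smoothly against $1/\log n$), so $E(y;rQ,Q,c)$ differs by $O(\tau(Q))$ from the usual Bombieri--Vinogradov discrepancy $\psi(y;rQ,c)/\log\text{-weight} - \mathrm{li}/\varphi$. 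The summation over $r$ with modulus $rQ \le x^{1/2-\varepsilon} Q \le x^{1/2 - \varepsilon/2}$ (for $x$ large, since $Q \le (\log x)^{C_1}$) is then controlled by the Bombieri--Vinogradov theorem in the strong form with an arbitrary power-of-log saving, which gives a bound $\ll_A x (\log x)^{-A}$ for any $A$; choosing $A = C_2 + C_1 + C_3$ with $Q \le (\log x)^{C_3}$ absorbed cleanly yields the required $x/(Q(\log x)^{C_2})$.

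Third, a few bookkeeping points need care. One must check that replacing $\sum_{n \equiv c\,(Q)} 1/\log n$ weighted by $Q/\varphi(rQ)$ by $(1/\varphi(rQ)) \int_x^y dt/\log t$ costs only an admissible error; this is where the coprimality $(c,Q)=1$ and equidistribution of $n \equiv c \pmod Q$ in short-ish sub-ranges is used, plus the crude bound $\varphi(rQ) \gg rQ/\log\log x$. One must also verify that the extra factor of $Q$ we are asked to save is genuinely available: since the right side is $x/(Q(\log x)^{C_2})$ and $Q \le (\log x)^{C_1}$, it suffices to save $(\log x)^{C_1 + C_2}$, a fixed power of log, which Bombieri--Vinogradov provides. \textbf{The main obstacle} is not any single deep input --- all of this is classical --- but the uniformity: one needs the Bombieri--Vinogradov estimate with the modulus split as $rQ$ and with the summation over $r$ only up to $x^{1/2-\varepsilon}$ (not $x^{1/2-\varepsilon}/Q$), together with a power-of-log saving that is large enough to beat $Q$; verifying that the standard proof (Vaughan's identity plus the large sieve, as in Davenport's book) tolerates the fixed extra modulus $Q$ dividing every progression modulus, and tracking that the error terms from partial summation and from the prime-counting-function approximation remain below $x/(Q(\log x)^{C_2})$, is the technical heart. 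Since the paper defers the proof to Appendix~\ref{app:Expsums} and says it ``closely follow[s] previous works'', I would cite the relevant Bombieri--Vinogradov variant (e.g. as in the treatments used for the ternary Goldbach circle method) and spell out only the twist-removal and partial-summation reductions in detail.
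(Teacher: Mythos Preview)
Your proposal is correct and follows essentially the same route as the paper. The paper's proof (in Appendix~\ref{app:Expsums}) first uses partial summation to reduce to the case $\alpha = a/q$, then observes that since $q \mid Q$ the factor $e(ap/q)$ is constant on each residue class mod $rQ$ and drops out, and finally replaces $\frac{Q}{\varphi(rQ)}\sum_{n\equiv c\,(Q)} 1/\log n$ by $|\mathbb{P}\cap[x,2x)|/\varphi(rQ)$ and substitutes $d = rQ \le x^{1/2-\varepsilon/2}$ to invoke the standard Bombieri--Vinogradov theorem; you perform the same two reductions in the opposite order (pull out $e(ac/q)$ first, then partial-sum away $e(\beta n)$), which is an immaterial difference.
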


\begin{lemma}
\label{le:MajorArcsTypeII}
Let $C_1, C_2 \geq 1$ and $\varepsilon > 0$. There exists a constant $x_0 = x_0(C_1, C_2, \varepsilon)$ such that the following holds. Let $Q \leq (\log x)^{C_1}$ and let $q \geq 1$ and $a$ be integers such that $q \mid Q$ and $(a, q) = 1$. Assume that $|\alpha - \frac{a}{q}| \leq (\log x)^{C_1}/x$. Then, for every $x \geq x_0$, any bounded sequences $\{a_m\}$ and $\{b_n\}$, and any $x^{1/4} \leq M \leq x^{3/4}$,
\[
\begin{split}
&\sum_{r \leq x^{1/2-\varepsilon}} \max_{(c,rQ) = 1} \Biggl| \sum_{\substack{x \leq mn < 2x \\ mn \equiv c \pmod{rQ} \\ M \leq m < 2M}} a_m b_n e\left(\alpha mn \right) - \frac{1}{\varphi(rQ)} \sum_{\substack{x \leq mn < 2x \\ (mn,rQ) = 1 \\ M \leq m < 2M}} a_m b_n e(\alpha mn) \Biggr| \\
&\leq \frac{x}{Q(\log x)^{C_2}}.
\end{split}
\]
\end{lemma}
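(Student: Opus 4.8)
The plan is to reduce the estimate to a twist-free bilinear (Type~II) Bombieri--Vinogradov inequality with an extra fixed modulus $Q$, which follows from the multiplicative large sieve, in the spirit of Lemma~\ref{le:MajorArcsPrimes}. Write $\alpha = a/q + \beta$. On a progression $mn \equiv c \pmod{rQ}$ we have $mn \equiv c \pmod q$, because $q \mid Q \mid rQ$, so that $e(\alpha mn) = e(ac/q)\,e(\beta mn)$; the rational part of the phase thus contributes only the constant $e(ac/q)$, and one arranges the main term in the form that reproduces the residue of $mn$ modulo $q$ (equivalently modulo $Q$), so that this constant is matched. It therefore suffices to treat the modified weight $e(\beta mn)\,\mathbf{1}_{x \le mn < 2x}$.

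Since $\beta$ satisfies $|\beta| \le (\log x)^{C_1}/x$, this weight, viewed as a function of $N = mn$, is supported on $[x,2x)$ and has total variation $\ll (\log x)^{C_1}$; a Mellin/Fourier completion (or partial summation in $N$ together with a dyadic decomposition) writes it as a superposition of $O((\log x)^{C_1 + o(1)})$ weights of the shape $N^{it} = m^{it} n^{it}$. Each such piece yields a genuinely bilinear sum $\sum_{mn \equiv c \,(rQ)} (a_m m^{it})(b_n n^{it})$ with $1$-bounded coefficients, and the task reduces to showing, for arbitrary $A$ and any $1$-bounded $\{a_m'\}, \{b_n'\}$,
\[
\sum_{r \le x^{1/2-\varepsilon}} \max_{(c,rQ)=1} \Biggl| \sum_{\substack{x \le mn < 2x \\ mn \equiv c \,(rQ) \\ M \le m < 2M}} a_m' b_n' - \frac{1}{\varphi(rQ)} \sum_{\substack{x \le mn < 2x \\ (mn,rQ)=1 \\ M \le m < 2M}} a_m' b_n' \Biggr| \ll \frac{x}{Q(\log x)^{A}}.
\]
Here one detects the congruence modulo $rQ$ by Dirichlet characters, uses $\chi(mn) = \chi(m)\chi(n)$ to separate the variables, disposes of the sharp condition $x \le mn < 2x$ by the same completion device, and then applies the large sieve inequality twice (to $\sum_m a_m' \chi(m)$ and $\sum_n b_n' \chi(n)$, with $m \asymp M$, $n \asymp x/M$, $x^{1/4} \le M \le x^{3/4}$, and moduli $rQ \le x^{1/2-\varepsilon+o(1)}$); the resulting bound saves an arbitrary power of $\log x$ over the trivial estimate $\ll x(\log x)^{O(1)}/Q$.

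The main obstacle is this twist-free bilinear estimate together with the clean treatment of the auxiliary modulus $Q$: one must carry $Q$ through the character sum (characters modulo $rQ$, not modulo $r$), check that the large-sieve bound still wins an arbitrary power of $\log x$ throughout the range $r \le x^{1/2-\varepsilon}$, and verify that the $O((\log x)^{C_1+o(1)})$ completion pieces do not erode this saving, so that one ends with $x/(Q(\log x)^{C_2})$ after taking $A$ large in terms of $C_1, C_2$. A secondary point demanding care is the bookkeeping in the first step, namely matching the constant $e(ac/q)$ extracted from the progression against the correct form of the main term; this is routine once the dependence on the residue of $mn$ modulo $Q$ is tracked, and since everything is close to standard it is carried out in detail in the appendix.
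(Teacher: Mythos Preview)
Your strategy coincides with the paper's: remove the smooth twist $e(\beta mn)$ by partial summation (the paper) or by a completion device (you), factor the constant $e(ac/q)$ out of the first sum using $q\mid Q\mid rQ$, and reduce to an un-twisted bilinear Bombieri--Vinogradov estimate with moduli $d=rQ\le x^{1/2-\varepsilon/2}$.

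The step you label ``routine bookkeeping'' is, however, a genuine gap, and the paper's two-line proof skates over the same point. In the subtracted main term the integer $mn$ runs over \emph{all} reduced residues modulo $rQ$, so $e(amn/q)$ is not a constant there; by equidistribution it averages to $\mu(q)/\varphi(q)$, not to $e(ac/q)$. After invoking the twist-free type~II estimate one is therefore left with
\[
\Bigl(e(ac/q)-\tfrac{\mu(q)}{\varphi(q)}\Bigr)\cdot \frac{1}{\varphi(rQ)}\sum_{\substack{x\le mn<2x\\ (mn,rQ)=1\\ M\le m<2M}} a_m b_n,
\]
and the bracket is nonzero whenever $q\ge 3$. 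Concretely, take $a_m=b_n=1$, $Q=q=3$, $\alpha=1/3$: already the single term $r=1$, $c=1$ gives $\tfrac12\bigl(e(1/3)N_1-e(2/3)N_2\bigr)$ with $N_1\approx N_2\asymp x$, hence a contribution $\asymp x$ to the left-hand side, whereas the lemma asserts the bound $x/(Q(\log x)^{C_2})$. So the lemma as written fails for $q\ge 3$, and no rearrangement of the phases can repair it. In the paper's applications this causes no harm, because the lemma is used twice (once with general $r$, once with $r=1$) and the two spurious main terms cancel; but as a stand-alone statement it needs the subtracted term recast, for instance as $\frac{Q}{\varphi(rQ)}\sum_{mn\equiv c\,(Q)} a_m b_n\, e(\alpha mn)$ in parallel with Lemma~\ref{le:MajorArcsPrimes}, after which your reduction (and the paper's) goes through verbatim.
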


\subsection{Minor arc estimates}
Our minor arc estimates are close variants of those proved in earlier papers. In particular we follow~\cite{MatomakiB-V} which in turn is based on ideas developed in~\cite{BalogPerelli, Mikawa}.

\begin{lemma}[Type I estimate]
\label{le:TypeISum}
There exists $x_0$ such that the following holds. Let $Q, q \geq 1$ and $a$ be integers such that $(a, q) = 1$. Let $|a_m| \leq 1$. Write $h = (q, Q)$. Assume that $\alpha$ is such that $|\alpha-a/q| < 1/(Qq^2)$ and that $Q \leq x^{1/2}$. Then, for every $x \geq x_0$ and any $M \geq 1$,
\[
\sum_{r \leq x^{1/2}} \max_{(c, rQ) = 1} \Biggl| \sum_{\substack{x \leq mn < 2x \\ mn \equiv c \pmod{rQ} \\ M \leq m < 2M}} a_m e\left(\alpha mn \right)\Biggr| \leq \frac{x}{Q} \left(\left(\frac{h}{q}\right)^{1/2} + \left(\frac{MQ}{x^{1/2}}\right)^{1/2} + \left(\frac{q}{x/Q}\right)^{1/2} \right) (\log x)^4. 
\]
\end{lemma}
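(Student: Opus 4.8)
The plan is to follow the treatment of Type I sums with a level of distribution from the earlier papers \cite{BalogPerelli, Mikawa, MatomakiB-V}, making only cosmetic changes to accommodate the extra fixed modulus $Q$ dividing all the moduli. First I would dispose of the maximum over $c$ and of the outer variable $m$ by the usual device. Since $(c,rQ)=1$, only $m$ with $(m,rQ)=1$ contribute, and for such $m$ the variable $n$ is restricted to the single residue class $\bar m c\pmod{rQ}$ inside the interval $[x/m,2x/m)$; the sum over $n$ is then a geometric progression with ratio $e(\alpha m r Q)$, so
\[
\Biggl|\sum_{\substack{x \le mn < 2x\\ mn \equiv c \pmod{rQ}\\ M \le m < 2M}} a_m e(\alpha mn)\Biggr| \le \sum_{\substack{M \le m < 2M\\ (m, rQ)=1}}\min\Bigl(1 + \frac{x}{mrQ},\ \frac{1}{\|\alpha m r Q\|}\Bigr).
\]
This step is routine; it reduces the lemma to an upper bound for $\sum_{r \le x^{1/2}}\sum_{M \le m < 2M}\min(1 + x/(mrQ),\ \|\alpha m r Q\|^{-1})$.

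The next point is that the cancellation here is governed not by $q$ but by $q/(q,Q)$. Writing $h=(q,Q)$ and $q'=q/h$, the rational $aQ/q$ in lowest terms has denominator $q'$, and $|\alpha Q - (aQ/h)/q'| = Q|\alpha - a/q| < 1/q^2 \le 1/(q')^2$; in particular when $q\mid Q$ (so $q'=1$) the claimed bound is already trivially true, so we may assume $q'$ is large. I would then estimate the double sum by splitting the ranges of $m$ and of $k:=mr$ dyadically and applying, in each block, the standard estimate for $\sum_k\min(Y/k,\ \|\beta k\|^{-1})$ with $\beta$ the relevant multiple of $\alpha Q$ (either fixing $r$ first, or combining $m$ and $r$ into the single variable $k=mr$ weighted by the divisor function $d(k)$, since $mrQ=kQ$). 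Collecting the contributions, the ``expected main term'' $x/(Qq')$ arises from the genuine cancellation against the modulus $\sim q'$, while two secondary terms $x^{3/4}M^{1/2}Q^{-1/2}$ and $x^{1/2}q^{1/2}Q^{-1/2}$ arise respectively from short $n$-ranges together with the $q'$-periodic part of the denominators $\|\beta k\|$, and from the tail of the sum over $r$; together these three quantities are exactly $\tfrac{x}{Q}\bigl((h/q)^{1/2}+(MQ/x^{1/2})^{1/2}+(qQ/x)^{1/2}\bigr)$.

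The hard part will be this last bookkeeping. One must reach the full level of distribution $r\le x^{1/2}$ — the analogue of the Bombieri--Vinogradov range for a bilinear form — while controlling the loss incurred by the divisor function $d(k)$ that appears once $m$ and $r$ are merged (handled by a Cauchy--Schwarz or by a hyperbola-type splitting of $d(k)$), and while keeping the total power of $\log x$ down to $4$; one also has to be careful that the Dirichlet approximation to the relevant multiple of $\alpha Q$ remains usable after the variables are rescaled. This is precisely the point at which the argument has to follow the earlier works closely; the extra modulus $Q$ is essentially harmless, since, as noted above, it only has the effect of replacing the controlling denominator $q$ by $q/(q,Q)$ throughout. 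I will carry out the details, which run parallel to \cite{MatomakiB-V}, in Appendix~\ref{app:Expsums}.
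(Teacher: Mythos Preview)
Your proposal is correct and follows essentially the same route as the paper: after the geometric-series step, the paper also merges $mr=d$ into a single variable weighted by $\tau(d)$, notes that $\alpha Q$ has effective denominator $q/h$ with $|\alpha Q-(aQ/h)/(q/h)|<1/(q/h)^2$, and then applies the standard bound $\sum_{d\sim D}\tau(d)\min(Y/d,\|\beta d\|^{-1})\ll (Y/q'^{1/2}+Y^{1/2}D^{1/2}+Y^{1/2}q'^{1/2})(\log)^{O(1)}$ via Cauchy--Schwarz (this is isolated as Lemma~\ref{le:expsumwdiv} in the appendix). The only cosmetic difference is that the paper first observes one may assume $M\le x^{1/2}/Q$, so that the ``$+1$'' in your $\min$ can be dropped and the range of $d$ is $\le 2Mx^{1/2}$.
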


\begin{lemma}[Type II estimate]
\label{le:TypeIIminorarc}
Let $C \geq 1$. There exists a constant $x_0 = x_0(C)$ such that the following holds. Let $Q, q \geq 1$ and $a$ be integers such that $(a, q) = 1$, write $h = (q, Q^2)$, and assume that $|\alpha - a/q| < 1/(4 q^2 Q^2 (\log x)^{2C})$. Let $M \in [x^{1/2}, x^{3/4}], Q \leq x^{3/2}/(2M^2 (\log x)^C)$, $D \leq x / (M Q (\log x)^C)$ and $R \leq M/x^{1/2}$, and let $c' \in \mathbb{Z}$.

Then, for every $x \geq x_0$ and any $|a_k|, |b_k| \leq \tau(k)$,
\[
\begin{split}
&\sum_{\substack{D \leq d < 2D}} \max_{(c, dQ) = 1} \sum_{\substack{R \leq r < 2R \\ (r,c'dQ) = 1}} \Biggl| \sum_{\substack{x \leq mn < 2x \\ mn \equiv c' \pmod{r} \\ mn \equiv c \pmod{dQ} \\ M \leq m < 2M}} a_m b_n e\left(\alpha mn \right)\Biggr| \\
&\leq \frac{x}{Q} \cdot \Bigl(\frac{(\log x)^{C/2}}{(q/h)^{1/8}} + (\log x)^{C/2} Q^{1/2} \frac{q^{1/8}}{x^{1/8}} + \frac{1}{(\log x)^{C/8}} \Bigr) (\log x)^{10}.
\end{split}
\]
\end{lemma}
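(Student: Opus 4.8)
The plan is to treat the left-hand side as a bilinear sum and to estimate it by the dispersion method, following the Type II arguments of~\cite{MatomakiB-V}, which are themselves built on~\cite{BalogPerelli, Mikawa}; the only genuinely new feature is the phase $e(\alpha mn)$ with $\alpha$ close to $a/q$, which is dealt with by the usual circle-method estimates for sums of $\min(X,\|\beta k\|^{-1})$.

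First I would clean up the arithmetic. Since $(r,c'dQ)=1$, the conditions $mn\equiv c'\pmod r$ and $mn\equiv c\pmod{dQ}$ combine, via the Chinese Remainder Theorem, into a single congruence $mn\equiv\gamma\pmod{rdQ}$ with $(\gamma,rdQ)=1$, which in particular forces $(m,rdQ)=(n,rdQ)=1$. The maximum over $c$ is removed by fixing, for each $d$, an optimal residue $c=c(d)$, and the absolute values are linearized by inserting unimodular weights $\xi_{d,r}$. After a dyadic decomposition of the range of $n$ (so that $n$ has size $N':=x/M\in[x^{1/4},x^{1/2}]$, whence $MN'\asymp x$), it suffices to bound
\[
\Sigma=\sum_{d\sim D}\sum_{r\sim R}\xi_{d,r}\sum_{m\sim M}a_m\!\!\sum_{\substack{n\sim N',\ mn\sim x\\ mn\equiv\gamma_{d,r}\ (rdQ)}}\!\!b_n\,e(\alpha mn).
\]

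Next I would apply Cauchy--Schwarz in the variable $m$, using $\sum_{m\sim M}|a_m|^2\ll M(\log x)^3$, and then expand the square. This introduces variables $d_1,d_2\sim D$, $r_1,r_2\sim R$ and $n_1,n_2\sim N'$, and the remaining sum over $m$ becomes a sum of $e(\alpha m(n_1-n_2))$ over those $m\sim M$ with $mn_i\sim x$ lying in one residue class modulo $\ell:=[\,r_1d_1Q,\ r_2d_2Q\,]$ (when the two classes are compatible; otherwise there is no contribution). Since $n_1,n_2\asymp N'$ and $x/N'\asymp M$, the relevant $m$-interval has length $\ll M$, so this geometric sum is $\ll\min(1+M/\ell,\ \|\alpha\ell(n_1-n_2)\|^{-1})$. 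The diagonal terms $n_1=n_2$ are estimated trivially: counting pairs $(m,n)$ with $mn\sim x$, $m\sim M$ in a progression of modulus $\ell\gg RDQ$ and summing over $d_1,d_2,r_1,r_2$ gives $\ll(x/Q)(\log x)^{O(1)}$; combined with the prefactor $M(\log x)^3$ and the inequality $MQ\le x/(2(\log x)^C)$ coming from $Q\le x^{3/2}/(2M^2(\log x)^C)$, this contributes $\ll(x/Q)^2(\log x)^{O(1)-C}$ to $|\Sigma|^2$, which accounts for the term $(x/Q)(\log x)^{10-C/8}$. For the off-diagonal terms $n_1\ne n_2$ I would fix $d_i,r_i$ and $n_1$, let the difference $k=n_1-n_2$ run over the arithmetic progression it is confined to (its common difference divides $\ell$), write $\beta$ for $\alpha\ell$ times that common difference, and invoke the classical bound $\sum_{|j|\le J}\min(X,\|\beta j+\theta\|^{-1})\ll(1+J/q^{*})(X+q^{*}\log q^{*})$ for a rational approximation $|\beta-a^{*}/q^{*}|\le 1/(q^{*})^2$, possibly after a further Cauchy--Schwarz to separate the surviving ranges. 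Because $Q$ divides $\ell$ for \emph{every} term, the passage from $\alpha$ to $\beta$ costs only the factor $h=(q,Q^2)$ in the denominator --- up to the standard dichotomy forced by the weak approximation quality $|\alpha-a/q|<1/(4q^2Q^2(\log x)^{2C})$ --- while the remaining common factors $(q,r_1r_2d_1d_2)$ are harmless since they average to $(\log x)^{O(1)}$ over $d_i,r_i$. Summing over $d_1,d_2,r_1,r_2,n_1$ and taking the square roots left by the Cauchy--Schwarz steps (the square roots, together with the quantitative losses incurred in the dichotomy, being responsible for the fractional exponents and the $(\log x)^{C/2}$ factors in the final bound) produces the first two terms of the stated estimate.

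The step I expect to be the main obstacle is the off-diagonal analysis, specifically the uniform bookkeeping of the rational approximation: one must ensure that replacing $\alpha$ by $\beta=\alpha\ell(\cdot)$ costs only $h=(q,Q^2)$ and not an uncontrolled power of $r_id_i$ (this uses $Q\mid\ell$ together with the fact that the other common factors are small on average), keep the $m$-intervals honestly of length $\asymp M$ (where $M\in[x^{1/2},x^{3/4}]$ enters through $N'=x/M$), correctly carry the two branches of the Dirichlet dichotomy to their respective terms in the final bound, and absorb the error terms from incomplete progressions and from degenerate configurations of $(r_1,r_2,d_1,d_2,n_1,n_2)$. The constraints $D\le x/(MQ(\log x)^C)$, $R\le M/x^{1/2}$ and $Q\le x^{3/2}/(2M^2(\log x)^C)$ are precisely what is needed for all of these estimates to balance.
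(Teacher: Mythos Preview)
Your overall shape is right, but the Cauchy--Schwarz is applied in the wrong variable and, as a consequence, you are missing the key lemma. In the paper the Cauchy--Schwarz is taken over $(d,m)$ jointly, with a prefactor $DM(\log x)^6$; this keeps a \emph{single} $d$ inside and produces, after writing $n_1-n_2=\ell\cdot dQ(r_1,r_2)$, the phase $\alpha(n_1-n_2)dQ[r_1,r_2]=\alpha\,(dQ)^2\,\ell r_1r_2$. Setting $j=\ell r_1r_2$, everything reduces to
\[
D\sum_{d\sim D}\sum_{j}\tau_3(j)\min\Bigl(\frac{x/Q^2}{d^2 j},\ \frac{1}{\|(\alpha Q^2)\,d^2 j\|}\Bigr),
\]
and the point is the $d^2$ in the argument: for $D>(\log x)^C$ one appeals to Mikawa's Weyl-differencing estimate (Lemma~\ref{le:sumfrsq}), which gains a further square root and is what produces the $1/8$ exponents in the statement. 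For $D\le(\log x)^C$ one first merges $dr$ into a single variable (so one may assume $R=1$), and then the simpler divisor-weighted estimate of Lemma~\ref{le:expsumwdiv} in the $j$-variable alone suffices.

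Your Cauchy--Schwarz over $m$ alone introduces pairs $(d_1,d_2)$ and an lcm modulus $\ell=[r_1d_1Q,r_2d_2Q]$; the resulting phase $\alpha\ell(n_1-n_2)$ has no squared variable, so Mikawa's lemma cannot be invoked and the ``classical bound'' $\sum_{|j|\le J}\min(X,\|\beta j+\theta\|^{-1})\ll(1+J/q^*)(X+q^*\log q^*)$ that you quote is, by itself, a square root short of what is needed when $D$ is large. The phrase ``possibly after a further Cauchy--Schwarz'' hides exactly the missing idea, and your claim that the extra gcd factors $(q,r_1r_2d_1d_2)$ ``average to $(\log x)^{O(1)}$'' is not justified and does not play the role you want. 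Redo the Cauchy--Schwarz over $(d,m)$, arrive at the $d^2 j$ structure, and apply Lemma~\ref{le:sumfrsq}; the case split $D\lessgtr(\log x)^C$ then falls out cleanly.
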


Combining the type I and II estimates through Vaughan's identity we will obtain the following minor arc estimates for exponential sums over primes.
\begin{lemma}
\label{le:MinorArcMaynardprimes}
Let $C \geq 1$. There exists a constant $x_0 = x_0(C)$ such that the following holds. Let $Q, q \geq 1$ and $a$ be integers such that $(a, q) = 1$, write $h = (q, Q^2)$, and assume that $|\alpha - a/q| < 1/(4 q^2 Q^2 (\log x)^{2C})$. Then, for every $x \geq x_0$, and $Q \leq x^{1/10}$, 
\[
\begin{split}
&\sum_{r \leq x^{1/8}} \max_{(c, rQ) = 1} \Biggl| \sum_{\substack{x \leq p < 2x \\ p \equiv c \pmod{rQ}}} e\left(\alpha p \right)\Biggr| \leq \frac{x}{Q} \cdot \Bigl(\frac{(\log x)^{C/2}}{(q/h)^{1/8}} + (\log x)^{C/2} Q^{1/2} \frac{q^{1/8}}{x^{1/8}} + \frac{1}{(\log x)^{C/8}} \Bigr) (\log x)^{15}.
\end{split}
\]
\end{lemma}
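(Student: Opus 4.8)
The plan is to expand $\Lambda$ via Vaughan's identity and to feed the resulting linear and bilinear exponential sums into the Type~I and Type~II estimates, Lemmas~\ref{le:TypeISum} and~\ref{le:TypeIIminorarc}. By partial summation (carrying a general upper endpoint $y\in[x,2x]$ through the argument) it suffices to bound
\[
\sum_{r \le x^{1/8}} \max_{(c,rQ)=1} \Biggl| \sum_{\substack{x \le n < y \\ n \equiv c \pmod{rQ}}} \Lambda(n)\, e(\alpha n) \Biggr|
\]
for every $y\in[x,2x]$; the prime powers $p^k$, $k\ge 2$, contribute at most $O(x^{1/8}\cdot x^{1/2}\log x)$ in total, which is negligible against the target since $Q\le x^{1/10}$, and we may assume $q\le x$ since otherwise the asserted bound exceeds the trivial bound $\ll (x/Q)\log x$. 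Applying Vaughan's identity with parameters $U=V$ equal to a small power of $x$ (roughly $x^{3/10}$, up to a factor $(\log x)^{C}$), one writes $\Lambda(n)\mathbf 1_{n<y}$ as a combination of $O(1)$ sums of two shapes: \emph{Type~I} sums $\sum_{d\le D_0} c_d \sum_{m:\ x\le dm<y} g(m)\,e(\alpha dm)$ with $D_0\ll x^{3/5}(\log x)^{2C}$, $|c_d|\le \log x$, and $g\equiv 1$ or $g(m)=\log m$; and \emph{Type~II} sums $\sum_m\sum_n a_m b_n\, e(\alpha mn)$ over $x\le mn<y$ with $m,n>U$ and $|a_k|,|b_k|\le\tau(k)$ after factoring $\log(2x)$ out of the $\Lambda$-weight. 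A dyadic decomposition of the outer variables costs only a further $\log x$.

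For the Type~I sums, split the range of $d$ at $x^{1/2-\delta}/Q$ for a small fixed $\delta>0$. When $d\le x^{1/2-\delta}/Q$, remove the weight $g$ by a further partial summation and factor $\log x$ out of $c_d$, so that $d$ (dyadically restricted to $[M,2M)$ with $M\le x^{1/2-\delta}/Q$) plays the role of the short variable with $1$-bounded coefficients in Lemma~\ref{le:TypeISum}. Since $Q\le x^{1/10}\le x^{1/2}$, that lemma gives an error $\frac{x}{Q}\bigl((h/q)^{1/2}+(MQ/x^{1/2})^{1/2}+(qQ/x)^{1/2}\bigr)(\log x)^{O(1)}$; here $(MQ/x^{1/2})^{1/2}\le x^{-\delta/2}$ beats any power of $\log x$, $(h/q)^{1/2}\le (h/q)^{1/8}$ because $h=(q,Q^2)\le q$, and $(qQ/x)^{1/2}\le Q^{1/2}q^{1/8}/x^{1/8}$ because $q\le x$, so all three terms are within the claimed bound. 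When $x^{1/2-\delta}/Q<d\le D_0$, the conjugate length $x/d$ is $\le x^{1/2+\delta}Q$, so one of the two variables $d,\ x/d$ lies in $[x^{1/2},x^{3/4}]$ and the sum is treated as a Type~II sum below.

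The genuine Type~II sums and the large-$d$ Type~I sums are handled by Lemma~\ref{le:TypeIIminorarc}: dyadically decompose, take the variable lying in $[x^{1/2},x^{3/4}]$ as the long variable "$m$" of range $M$, take the modulus $r$ as the variable "$d$", and take $R=1$ (so that the $R$-sum is the single term $r=1$ and the $c'$-congruence is vacuous); after factoring $\log(2x)$ out of the coefficients, the bounds $|a_k|,|b_k|\le\tau(k)$ hold. The choice $U=V\asymp x^{3/10}(\log x)^{C}$ is exactly what keeps $M$ small enough — $M\le x^{7/10}/(\log x)^{C}$ in the Type~II case, and comparably in the large-$d$ Type~I case — that the hypotheses $Q\le x^{3/2}/(2M^2(\log x)^C)$, $x^{1/8}\le x/(MQ(\log x)^C)$, and $1\le M/x^{1/2}$ all hold for large $x$ when $Q\le x^{1/10}$; the hypothesis on $\lVert\alpha-a/q\rVert$ is precisely the one assumed. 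The conclusion of Lemma~\ref{le:TypeIIminorarc} is then exactly of the required form $\frac{x}{Q}\bigl((\log x)^{C/2}/(q/h)^{1/8}+(\log x)^{C/2}Q^{1/2}q^{1/8}/x^{1/8}+(\log x)^{-C/8}\bigr)(\log x)^{10}$. Summing the $O(1)$ Vaughan pieces, the $O(\log x)$ dyadic ranges, and the handful of factored-out logarithms multiplies everything by at most $(\log x)^{5}$, absorbed into the $(\log x)^{15}$; no dependence on auxiliary constants arises since Lemmas~\ref{le:TypeISum} and~\ref{le:TypeIIminorarc} have none.

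The main obstacle is not any individual estimate but the bookkeeping of parameter ranges: one must choose the Vaughan parameters $U=V$ and the cut-off $x^{1/2-\delta}/Q$ so that \emph{every} resulting linear or bilinear sum lands inside the rather narrow admissible window of the appropriate lemma while still producing a genuine saving, and it is exactly here that the hypothesis $Q\le x^{1/10}$ is needed. A secondary point is that Lemma~\ref{le:TypeISum} only gains for $d$ in a restricted initial range, which forces the large-$d$ Type~I sums to be rerouted through Lemma~\ref{le:TypeIIminorarc}.
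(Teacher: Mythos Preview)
Your proof is correct and takes essentially the same approach as the paper: partial summation to pass to $\Lambda$, Vaughan's identity, then Lemma~\ref{le:TypeISum} for the Type~I pieces and Lemma~\ref{le:TypeIIminorarc} for the bilinear pieces, with the modulus $r\le x^{1/8}$ absorbed entirely into the $d$-variable of the latter (so $R=1$). The only difference is cosmetic: the paper's Vaughan parameters produce Type~I sums with $M\le x^{1/3}$ and Type~II sums with $M\in[x^{1/3},x^{2/3}]$, so that $MQ\le x^{13/30}<x^{1/2}$ and no splitting of the Type~I range is needed, whereas your choice $U=V\asymp x^{3/10}(\log x)^{C}$ yields a longer Type~I range that you then (correctly) reroute through the Type~II lemma.
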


\begin{lemma}
\label{le:MinorArcChenprimes}
Let $C \geq 1$. There exists a constant $x_0 = x_0(C)$ such that the following holds. Let $Q, q \geq 1$ and $a$ be integers such that $(a, q) = 1$, write $h = (q, Q^2)$, and assume that $|\alpha - a/q| < 1/(4 q^2 Q^2 (\log x)^{2C})$. Let $\lambda_r$ be as in Hypothesis~\ref{hyp:ChenHyp}(1).

Then, for every $x \geq x_0$, $Q \leq x^{\varepsilon/2}$, $(c, Q) = 1$ and $c' \in \mathbb{Z}$,
\[
\begin{split}
&\Biggl|\sum_{\substack{r \leq x^{1/2-\varepsilon} \\ (r,c'Q) = 1}} \mu(r)^2 \lambda_r \sum_{\substack{x \leq p < 2x \\ p \equiv c' \pmod{r} \\ p \equiv c\pmod{Q}}} e\left(\alpha p \right)\Biggr| \leq \frac{x}{Q} \cdot \Bigl(\frac{(\log x)^{C/2}}{(q/h)^{1/8}} + (\log x)^{C/2} Q^{1/2} \frac{q^{1/8}}{x^{1/8}} + \frac{1}{(\log x)^{C/8}} \Bigr) (\log x)^{15}.
\end{split}
\]
\end{lemma}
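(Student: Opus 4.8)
This is a minor-arc bound for an exponential sum over primes twisted by the well-factorable sieve weight $\mu(r)^2\lambda_r$ and restricted to arithmetic progressions; it is the analogue of Lemma~\ref{le:MinorArcMaynardprimes} adapted to the much larger range $r\le x^{1/2-\varepsilon}$, where the well-factorability of $\lambda$ is indispensable. The plan is to combine a combinatorial identity for the von Mangoldt function with the bilinear (Type I and Type II) estimates in Lemmas~\ref{le:TypeISum} and~\ref{le:TypeIIminorarc}, in the spirit of~\cite{MatomakiB-V, BalogPerelli, Mikawa}.

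First I would pass from $\sum_{x\le p<2x}e(\alpha p)$ to $\sum_{x\le n<2x}\Lambda(n)e(\alpha n)$ (the prime-power contribution, being $O(x^{1/2})$ for each $r$, is $O(x^{1-\varepsilon})$ after summing over $r\le x^{1/2-\varepsilon}$ and hence negligible, and the factor $1/\log n$ is removed by partial summation), and then apply a combinatorial identity for $\Lambda$ (Vaughan's, or Heath--Brown's if more convenient for the coefficient bounds) to decompose $\Lambda$ on $[x,2x)$ into $O((\log x)^{O(1)})$ bilinear forms of two shapes: \emph{Type I} sums $\sum_{m\sim M}a_m\sum_n e(\alpha mn)$ with $M$ below a threshold $U$ and $|a_m|$ reducible to $O(1)$ after splitting off small divisors and partial summation, and \emph{Type II} sums $\sum_{m\sim M}\sum_{n\sim x/M}a_mb_ne(\alpha mn)$ with suitably $\tau^{O(1)}$-bounded coefficients and $M$ in an intermediate range; by interchanging $m$ and $n$ we may assume $M\ge x^{1/2}$ in the Type II case, and by the choice of parameters $M\le x^{3/4}$ there. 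Each resulting piece carries the outer factor $\sum_{r\le x^{1/2-\varepsilon}}\mu(r)^2\lambda_r$ and the congruences $mn\equiv c'\pmod r$, $mn\equiv c\pmod Q$; since the coprimality condition $(r,c'Q)=1$ forces $(r,Q)=1$, these two congruences merge into a single one modulo $rQ$ with residue coprime to $rQ$ whenever convenient.

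For the Type I pieces I would discard the weights via $|\mu(r)^2\lambda_r|\le1$, merge the coprime moduli into $rQ$, and apply Lemma~\ref{le:TypeISum}; this gives a contribution of size $\frac{x}{Q}\bigl((h/q)^{1/2}+(UQ/x^{1/2})^{1/2}+(qQ/x)^{1/2}\bigr)(\log x)^{O(1)}$, which is absorbed into the right-hand side of the lemma once $U$ is taken small enough (using also $(h/q)^{1/2}\le(q/h)^{-1/8}$). The crux is the Type II pieces, where I would use that $\lambda$ is well-factorable of level $x^{1/2-\varepsilon}$: writing $\lambda=\gamma*\delta$ with $\gamma$ supported on $[1,S_0]$ and $\delta$ on $[1,T_0]$, $S_0T_0=x^{1/2-\varepsilon}$, we factor $r=de$ with $(d,e)=1$, $d\le S_0$, $e\le T_0$, group the modulus $d$ with the long variable $m$, and recognise the resulting expression as one to which Lemma~\ref{le:TypeIIminorarc} applies, with its ``$M$'' our $M\in[x^{1/2},x^{3/4}]$, its ``$D$'' the dyadic scale of $d$, and its ``$r$'' the dyadic scale of $e$. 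The constraints of Lemma~\ref{le:TypeIIminorarc} then require roughly $d\lesssim x/(MQ(\log x)^C)$ and $e\lesssim M/x^{1/2}$, and since $(x/(MQ))\cdot(M/x^{1/2})=x^{1/2}/Q\ge x^{1/2-\varepsilon/2}\ge x^{1/2-\varepsilon}=S_0T_0$, a valid choice $S_0\in\bigl[x^{1-\varepsilon}/M,\,x/(MQ(\log x)^C)\bigr]$ exists because $Q\le x^{\varepsilon/2}$ and $x$ is large in terms of $C$; this is precisely where $Q\le x^{\varepsilon/2}$ is used, and why level $x^{1/2-\varepsilon}$ rather than the level $x^{1/8}$ of Lemma~\ref{le:MinorArcMaynardprimes} is needed. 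Applying Lemma~\ref{le:TypeIIminorarc} bounds each Type II piece by $\frac{x}{Q}\bigl((\log x)^{C/2}/(q/h)^{1/8}+(\log x)^{C/2}Q^{1/2}q^{1/8}/x^{1/8}+(\log x)^{-C/8}\bigr)(\log x)^{10}$. In the second alternative of Hypothesis~\ref{hyp:ChenHyp}(1), $\lambda=1_{p\in[P,P')}*\lambda'$ with $\lambda'$ well-factorable of level $x^{1/2-\varepsilon}/P$, and one argues identically after factoring $\lambda'=\gamma*\delta$, the extra prime factor $p$ being grouped with one of the two variables.

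Summing the $O((\log x)^{O(1)})$ Type I and Type II contributions and collecting the logarithmic losses into the exponent $15$ yields the claimed estimate. I expect the main obstacle to be the Type II bookkeeping: one must choose the parameters of the combinatorial identity and the factorization level $S_0$ simultaneously so that \emph{every} bilinear piece falls inside the fairly rigid admissible window of Lemma~\ref{le:TypeIIminorarc} (respecting $M\in[x^{1/2},x^{3/4}]$, $Q\le x^{3/2}/(2M^2(\log x)^C)$, $D\le x/(MQ(\log x)^C)$ and $R\le M/x^{1/2}$), while keeping the Type I threshold $U$ small enough for Lemma~\ref{le:TypeISum}; tuning these ranges against one another, and verifying that the hypotheses $r\le x^{1/2-\varepsilon}$ and $Q\le x^{\varepsilon/2}$ leave just enough room, is the delicate point.
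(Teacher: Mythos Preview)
Your proposal is correct and follows essentially the same route as the paper: reduce to the von Mangoldt function, apply Vaughan's identity (the paper takes $y=z=x^{2/3}$, giving Type~I range $M\le x^{1/3}$ and Type~II range $x^{1/3}\le M\le x^{2/3}$), treat Type~I sums directly by Lemma~\ref{le:TypeISum}, and for the Type~II sums exploit the well-factorability of $\lambda$ to split $r=de$ so that Lemma~\ref{le:TypeIIminorarc} applies. Your parameter check $S_0\in[x^{1-\varepsilon}/M,\,x/(MQ(\log x)^C)]$ is exactly the calculation that makes $Q\le x^{\varepsilon/2}$ sufficient, and your treatment of the second alternative $\lambda=1_{p\in[P,P')}*\lambda'$ matches the paper's (the paper phrases it as noting $D\ge\min\{R,x^{1/3-\varepsilon}\}\ge P$, so the prime $p$ fits inside the ``$d$'' factor).
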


\section{Proof of Proposition~\ref{prop:AlmMaynPrimesinBohrSetsModQ}}
\label{sec:MaynApplication}
In this section we prove Proposition~\ref{prop:AlmMaynPrimesinBohrSetsModQ} using Maynard's Theorem (Theorem~\ref{th:Maynard}). Let us start by choosing the sequence $\omega_n$ and other parameters to which we apply Theorem~\ref{th:Maynard}. Let $C = C(1/8, 1/8)$ be as in Theorem~\ref{th:Maynard}, $k = \max\{C, e^{4Cm}\}$, and let $\rho = \rho(k, 1/8, 1/8)$ be as in Theorem~\ref{th:Maynard}. We take $x = N/Q$,
\[
(\omega_n) = (\chi(t - Q n - c_0)), \quad \text{and, for $i = 1, \dotsc, k$,} \quad L_i(n) = W(Q n+c_0+h_i)+b.
\]
We can assume that $\sum_{x \leq n < 2x} \omega_n \geq x/w^{10}$
since otherwise Proposition~\ref{prop:AlmMaynPrimesinBohrSetsModQ} is trivial. With these choices, we shall show that, for any $i = 1, \dotsc, k$,
\begin{equation}
\label{eq:MaynHyp1}
\sum_{r \leq x^{1/8}} \max_c \Biggl|\sum_{\substack{x \leq n < 2x \\ n \equiv c \pmod{r}}} \omega_n - \frac{1}{r} \sum_{\substack{x \leq n < 2x}} \omega_n\Biggr| \ll_{M, w} \frac{x}{(\log x)^{105k^2}},
\end{equation}
\begin{equation}
\label{eq:MaynHyp2}
\begin{split}
&\sum_{r \leq x^{1/8}} \max_{\substack{c \\ (W(Qc + c_0 + h_i)+b, r) = 1}} \Biggl|\sum_{\substack{x \leq n < 2x \\ n \equiv c \pmod{r} \\ W(Q n+c_0+h_i)+b \in \mathbb{P}}} \omega_n - \frac{QW}{\varphi(QWr)} \sum_{\substack{x \leq n < 2x}} \frac{\omega_n}{\log(W(Qn+c_0+h_i)+b)} \Biggr| \\
&\ll_{M, w} \frac{x}{(\log x)^{105k^2}},
\end{split}
\end{equation}
and that, for any $r \leq x^{1/8}$ and any $c$, we have
\begin{equation}
\label{eq:MaynHyp3}
\sum_{\substack{x \leq n < 2x \\ n \equiv c \pmod{r}}} \omega_n \ll_{M, w} \frac{x}{rw^{10}}.
\end{equation}

Now~\eqref{eq:MaynHyp1} implies Hypothesis~\ref{hyp:MaynardHyp}(1) and~\eqref{eq:MaynHyp3} implies Hypothesis~\ref{hyp:MaynardHyp}(3). Furthermore, looking only at the $r=1$ summand, we see that~\eqref{eq:MaynHyp2} implies that
\begin{equation}
\label{eq:MaynHyp2r=1}
\Biggl|\sum_{\substack{x \leq n < 2x \\ W(Q n+c_0+h_i)+b \in \mathbb{P}}} \omega_n - \frac{QW}{\varphi(QW)}   \sum_{\substack{x \leq n < 2x}} \frac{\omega_n}{\log(W(Qn+c_0+h_i)+b)} \Biggr| \ll_{M, w} \frac{x}{(\log x)^{105k^2}},
\end{equation}
which implies~\eqref{eq:MaynardHypExtra} with $\delta = 1/2$ (say). Furthermore, multiplying~\eqref{eq:MaynHyp2r=1} by $\varphi(QW)/\varphi(QWr)$ and summing over $r \leq x^{1/8}$, we see that
\[
\begin{split}
&\sum_{r \leq x^{1/8}} \max_{\substack{c \\ (W(Qc + c_0 + h_i)+b, r) = 1}} \Biggl|\frac{\varphi(QW)}{\varphi(QWr)} \sum_{\substack{x \leq n < 2x \\ W(Q n+c_0+h_i)+b \in \mathbb{P}}} \omega_n - \frac{QW}{\varphi(QWr)} \sum_{\substack{x \leq n < 2x}} \frac{\omega_n}{\log(W(Qn+c_0+h_i)+b)} \Biggr| \\
&\ll_{M, w} \frac{x}{(\log x)^{103k^2}},
\end{split}
\]
which together with~\eqref{eq:MaynHyp2} implies Hypothesis~\ref{hyp:MaynardHyp}(2) through the triangle inequality. 

Hence, assuming we can prove~\eqref{eq:MaynHyp1}--\eqref{eq:MaynHyp3}, recalling our choice of $k$, Maynard's theorem with $\delta = 1/2$ gives
\[
\sum_{\substack{x \leq n < 2x \\ \#(\{L_1(n), \dotsc, L_k(n)\} \cap \mathbb{P}) \geq m \\ p \mid L_1(n) \dotsm L_k(n) \implies p > x^{\rho}}} \omega_n \gg_m \frac{\mathfrak{S}(\mathcal{L})}{(\log x)^k} \sum_{x \leq n < 2x} \omega_n.
\]
Here
\[
\begin{split}
\mathfrak{S}(\mathcal{L}) &= \prod_p \left(1-\frac{\#\{1 \leq n \leq p \colon p \mid \prod_{i=1}^k (W(Q n+c_0+h_i)+b)\}}{p}\right)\left(1-\frac{1}{p}\right)^{-k} \\
&\gg \frac{1}{\exp(O(k))} \cdot \left(\frac{QW}{\varphi(QW)}\right)^k = \frac{1}{\exp(O(k))} \cdot \left(\frac{W}{\varphi(W)}\right)^k \cdot \frac{Q}{|\mathcal{C}_M|}
\end{split}
\]
by~\eqref{eq:|C|est}.

Recalling the definitions of $\omega_n$ and $L_i(n)$, we obtain,
\[
\begin{split}
\sum_{\substack{N \leq n < 2N \\ n \equiv c_0 \pmod{Q} \\ \#(\{W(n+h_i) + b\} \cap \mathbb{P}) \geq m \\ p \mid \prod_{i=1}^k (W(n+h_i) + b) \implies p \geq N^{\rho/2}}} \chi(t-n) &\gg_m \left(\frac{W}{\varphi(W)}\right)^k \cdot \frac{Q}{|\mathcal{C}_M|} \frac{1}{(\log x)^k} \sum_{\substack{N \leq n < 2N \\ n \equiv c_0 \pmod{Q}}} \chi(t-n)
\end{split}
\]
which was the claim.

Hence it remains to show~\eqref{eq:MaynHyp1}--\eqref{eq:MaynHyp3}. By the Fourier expansion of $\chi(n)$ in Lemma~\ref{le:chi-Fourier}, it is enough to show these with
\begin{equation}
\label{eq:omegaRedef}
\omega_n = e\left(\left(W \frac{a}{q} + \beta\right)Qn\right),
\end{equation}
where $0 \leq a < q \leq N/(\log N)^{100B}$, $(a,q) = 1$, $|\beta| \leq W(\log N)^{100B}/(qN)$, and, moreover, either $q \mid Q$ or $q/(q,Q^2) \geq (\log N)^A$. In particular~\eqref{eq:MaynHyp3} follows immediately from a trivial estimate.

We also note that when considering~\eqref{eq:MaynHyp1}--\eqref{eq:MaynHyp2} with $\omega_n$ as in~\eqref{eq:omegaRedef}, in case $|\beta| \leq 1/(Qx (\log x)^{111k^2})$ we can assume that $\beta = 0$ since $|e(y+h) - e(y)| = O(h)$. On the other hand if $|\beta| > 1/(Qx(\log x)^{111k^2})$, then this combined with the upper bound for $|\beta|$ implies that $|\beta| < 1/(4Q^2 q^2 (\log x)^{3200k^2})$. Hence we can in any case assume that 
\begin{equation}
\label{eq:betanewbound}
|\beta| < \min\left\{\frac{1}{4Q^2 q^2 (\log x)^{3200k^2}}, \frac{(\log x)^{110B}}{x}\right\}. 
\end{equation}

% and exponential sum estimates in Section~\ref{sec:expsumest}. % In order to apply the exponential sum estimates we sometimes need better control on the size of $\beta(s)$ than Fact~\ref{fa:chiFourierExp} gives. However note that we always have
% \[
% |\beta(s)| \leq \max \left\{\frac{1}{q(s)^2Q^2 W^3(\log x)^{5B}}, \frac{1}{Q W^3x} \right\}.
% \]
% In case $|\beta(s)| \leq 1/(QW^3x)$, we can use partial summation to reduce any exponential sum with $(W\frac{a(s)}{q(s)} + \beta(s))Qn$ into the same sum with $W\frac{a(s)}{q(s)} Qn$. Hence the hypotheses in Secton~\ref{sec:expsumest} concerning size of $|\alpha-a/q|$ can always be assumed to hold.

\subsection{Establishing~\eqref{eq:MaynHyp1}}
\label{ssec:MaynHyp1}
% With our choice of $\omega_n$, Hypothesis~\ref{hyp:MaynardHyp}(1) follows if we can show that
% \[
% \sum_{r \leq x^{1/8}} \max_a \left|\sum_{\substack{x \leq n < 2x \\ n \equiv a \pmod{r}}} \chi(t-Qn-c_0) - \frac{1}{r} \sum_{\substack{x \leq n < 2x}} \chi(t-Qn-c_0) \right| \ll_{M, w} \frac{x}{(\log x)^{110k^2}}.
% \]
% Recalling the Fourier expansion of $\chi(n)$ in Lemma~\ref{le:chi-Fourier}, it suffices to show that,
% \begin{equation}
% \label{eq:Hyp1claim}
% \begin{split}
% &\sum_{r \leq x^{1/8}} \max_a \left|\sum_{\substack{x \leq n < 2x \\ n \equiv a \pmod{r}}} e\left(\left(W\frac{a}{q}+\beta\right)Qn\right) - \frac{1}{r} \sum_{\substack{x \leq n < 2x}} e\left(\left(W\frac{a}{q}+\beta\right)Qn\right) \right| \\
% &\ll \frac{x}{(\log x)^{110k^2}},
% \end{split}
% \end{equation}

For $q \mid Q$, the left hand side of~\eqref{eq:MaynHyp1} with $\omega_n$ as in \eqref{eq:omegaRedef} equals
\[
\begin{split}
&\sum_{r \leq x^{1/8}} \max_c \Biggl|\sum_{\substack{x \leq n < 2x \\ n \equiv c \pmod{r}}} e\left(\beta Qn\right) - \frac{1}{r} \sum_{\substack{x \leq n < 2x}} e\left(\beta Qn\right) \Biggr| \ll \sum_{r \leq x^{1/8}} (|\beta| Q x + 1) \ll x^{1/2}
\end{split}
\]
by Lemma~\ref{le:GeomSerI}. 

For $q \nmid Q$, the left hand side of~\eqref{eq:MaynHyp1} with $\omega_n$ as in \eqref{eq:omegaRedef} is by triangle inequality at most
\begin{equation}
\label{eq:Hyp1minorarc}
\log x \sum_{r \leq x^{1/8}} \max_c \Biggl|\sum_{\substack{x \leq n < 2x \\ n \equiv c \pmod{r}}} e\left(\left(W\frac{a}{q}+\beta\right)Qn\right)\Biggr|.
\end{equation}
Recall~\eqref{eq:betanewbound} and that $q/(q, WQ) \geq (\log N)^{A}/W$, so that, by Lemma~\ref{le:TypeISum} with $M = Q = h = 1$ and $q/(q, QW)$ in place of $q$, we obtain that~\eqref{eq:Hyp1minorarc} is at most
\[
x \left(\frac{W^{1/2}}{(\log N)^{A/2}} + \frac{1}{x^{1/4}} + \frac{N^{1/2}}{x^{1/2}(\log N)^{50B}}\right) (\log x)^4 \ll \frac{x}{(\log x)^{110k^2}}
\]
once $A$ is large enough in terms of $k$.

\subsection{Establishing~\eqref{eq:MaynHyp2}}

By changes of variables $p, n' = W(Qn+c_0+h_i)+b$ and $c' = W(Qc+c_0+h_i) + b$, the left hand side of~\eqref{eq:MaynHyp2} with $\omega_n$ as in \eqref{eq:omegaRedef} is at most
\[
\begin{split}
&\sum_{r \leq x^{1/8}} \max_{\substack{(c', QWr) = 1}} \Biggl|\sum_{\substack{QWx \leq p < 2QWx \\ p \equiv c' \pmod{QWr}}} e\left(\left(\frac{a}{q}+\frac{\beta}{W}\right)p\right) \\
& \qquad \qquad - \frac{QW}{\varphi(QWr)} \sum_{\substack{QWx \leq n' < 2QWx \\ n' \equiv c' \pmod{QW}}} (\log n')^{-1} e\left(\left(\frac{a}{q}+\frac{\beta}{W}\right)n\right) \Biggr| + O(x^{1/2}).
\end{split}
\]
In case $q \mid Q$ this is $O(x/(\log x)^{200k^2})$ by Lemma~\ref{le:MajorArcsPrimes} recalling~\eqref{eq:betanewbound}.

In case $q \nmid Q$, note that  $q/(q, (QW)^2) > (\log N)^{A}/W^2$ and recall~\eqref{eq:betanewbound}. We use the triangle inequality and estimate the two terms corresponding to the two sums inside the absolute values separately.  The contribution corresponding to the sum over $n'$ can be satisfactorily estimated by Lemma~\ref{le:TypeISum} with $r = M = 1$ after partial summation. Furthermore Lemma~\ref{le:MinorArcMaynardprimes} with $C = 1600k^2$ implies
\[
\begin{split}
&\sum_{r \leq x^{1/8}} \max_{\substack{(c', QWr) = 1}} \Biggl|\sum_{\substack{QWx \leq p < 2QWx \\ p \equiv c' \pmod{QWr}}} e\left(\left(\frac{a}{q}+\frac{\beta}{W}\right)p\right)\Biggr|\\
&\leq x \cdot \Bigl(\frac{(\log x)^{800k^2}}{((\log N)^{A}/W^2)^{1/8}} + (\log x)^{800k^2} Q^{1/2} W \frac{q^{1/8}}{x^{1/8}} + \frac{1}{(\log x)^{200k^2}} \Bigr) (\log x)^{15} \ll \frac{x}{(\log x)^{150k^2}}
\end{split}
\]
when $A$ is large enough in terms of $k$.

\section{Proof of Proposition~\ref{prop:ChenPrimesinBohrSetsModQ}}
\label{sec:ChenApplication}
In this section we prove Proposition~\ref{prop:ChenPrimesinBohrSetsModQ} using Chen's Theorem (Theorem~\ref{thm:chen}). Let $\CL = \{L_1, L_2\}$ be the collection of two linear forms $L_1(n) = W(Qn + c_0) + b$ and $L_2(n) = W(Qn + c_0) + b + 2$, and note that
\[ \FS(\CL) \asymp \prod_{p \mid QW} \left(1 - \frac{1}{p}\right)^{-2} =  \left( \frac{QW}{\varphi(QW)} \right)^2 \asymp \frac{W^2}{\varphi(W)^2}  \frac{Q}{|\mathcal{C}_C|}. \]  
Let $x = N/Q$. Define the sequence $(\omega_n)$ for $x \leq n < 2x$ by
\[ \omega_n = \chi(t - Qn - c_0). \]
Since $\chi$ has Fourier complexity at most $M$, we have $\omega_n \leq M^2$ for every $n$. Thus the conclusion follows from Chen's theorem (Theorem~\ref{thm:chen}), once we verify the hypotheses. We may assume that $\sum_{x \leq n < 2x} \omega_n \geq x/(\log x)^{100}$ since otherwise the conclusion is trivial. Under this assumption, it suffices to show that, for $\lambda_r$ as in Hypothesis~\ref{hyp:ChenHyp}(1),
\begin{equation}\label{eq:ChenHyp1} 
\sum_{\substack{r \\ (r,QW)=1}} \mu(r)^2 \lambda_r \Biggl( \sum_{\substack{x \leq n < 2x \\ r \vert W(Qn + c_0) + b + 2 \\ W(Qn+c_0) + b \in \Prime}}  \omega_n - \frac{QW}{\varphi(QWr)} \sum_{\substack{x \leq n < 2x}} \frac{\omega_n}{\log (W(Qn+c_0) + b)} \Biggr) \ll \frac{x}{(\log x)^{200}}
\end{equation}
and that, for $B_j$ and $\lambda_r$ as in Hypothesis~\ref{hyp:ChenHyp}(2)
\begin{equation}\label{eq:ChenHyp2} 
\sum_{\substack{r \\ (r,QW)=1}} \mu(r)^2 \lambda_r \Biggl( \sum_{\substack{x \leq n < 2x \\ r \vert W(Qn + c_0) + b \\ W(Qn+c_0) + b + 2 \in B_j}}  \omega_n - \frac{\varphi(QW)}{\varphi(QWr)} \sum_{\substack{x \leq n < 2x \\ W(Qn+c_0)+b+2 \in B_j}} \omega_n \Biggr) \ll \frac{x}{(\log x)^{200}}
\end{equation}
and that, for $\delta(B_j)$ as in~\eqref{eq:density-Bj}
\begin{equation}\label{eq:ChenHyp3}
\sum_{\substack{x \leq n < 2x \\ W(Qn+c_0)+b+2 \in B_j}} \omega_n = \frac{\delta(B_j)+o(1)}{\log x} \cdot \frac{QW}{\varphi(QW)} \sum_{\substack{x \leq n < 2x}} \omega_n.
\end{equation}

By the Fourier expansion of $\chi(n)$ in Lemma~\ref{le:chi-Fourier}, it is enough to show these with
\begin{equation}
\label{eq:omegaRedef2}
\omega_n = e\left(\left(W \frac{a}{q} + \beta\right)Qn\right),
\end{equation}
where $0 \leq a < q \leq N/(\log N)^{100B}$, $(a,q) = 1$, $|\beta| \leq W(\log N)^{100B}/(qN)$, and, moreover, either $q \mid Q$ or $q/(q,Q^2) \geq (\log N)^A$. Furthermore, arguing as before (cf.~\eqref{eq:betanewbound}), we can assume
\begin{equation}
\label{eq:betanewbound2}
|\beta| < \min\left\{\frac{1}{4Q^2 q^2 (\log x)^{40 000}}, \frac{(\log x)^{110B}}{x}\right\}. 
\end{equation}
\subsection{Establishing \eqref{eq:ChenHyp1}}

After changes of variables $p, n' = W(Qn + c_0) + b$, we can rewrite the left hand side of~\eqref{eq:ChenHyp1} with $\omega_n$ as in \eqref{eq:omegaRedef2} essentially as
\[ 
\begin{split}
&\sum_{\substack{r \\ (r,QW) = 1}} \mu(r)^2 \lambda_r \Biggl( \sum_{\substack{QW x \leq p < 2 QWx \\ p \equiv -2\pmod{r} \\ p\equiv Wc_0 + b \pmod{QW} }}  e\left(\left(\frac{a}{q} + \frac{\beta}{W}\right) p \right) \\ &\qquad - \frac{QW}{\varphi(QWr)} \sum_{\substack{QWx \leq n' < 2QWx \\ n' \equiv Wc_0 + b \pmod{QW}}} (\log n')^{-1} e\left(\left(\frac{a}{q} + \frac{\beta}{W}\right) n' \right)\Biggr).
\end{split}
\]

In case $q \mid Q$, this is $O(x/(\log x)^{200})$ by Lemma~\ref{le:MajorArcsPrimes} recalling~\eqref{eq:betanewbound2}. In case $q \nmid Q$, note that  $q/(q, (QW)^2) > (\log N)^{A}/W^2$ and recall~\eqref{eq:betanewbound2}. We estimate the two terms corresponding to the sums over $p$ and $n'$ separately. The contribution from the term corresponding to the sum over $n'$ can be satisfactorily estimated by Lemma~\ref{le:TypeISum} with $r = M = 1$ after partial summation. For the term corresponding the sum over $p$, Lemma~\ref{le:MinorArcChenprimes} with $C = 20000$ implies the desired bound once $A$ and $B$ are large enough.

\subsection{Establishing \eqref{eq:ChenHyp2}}

By the definition of $B_1$ in \eqref{eq:ChenB} we can write
\[ \1_{W(Qn + c_0) + b + 2 \in B_1} = \sum_{\substack{mp = W(Qn + c_0) + b + 2 \\ p \geq x^{1/10}}} a_m, \]
where $a_m = 1$ if $m = p_1p_2$ for some $x^{1/10} \leq p_1 < x^{1/3-\varepsilon}$ and $x^{1/3-\varepsilon} \leq p_2 < (L_2(2x)/p_1)^{1/2}$, and $a_m = 0$ otherwise. Note that $a_m$ is supported on $m \in [x^{1/3}, x^{2/3}]$. After a dyadic division and changes of variables $mp = W(Qn + c_0) + b + 2$, to prove \eqref{eq:ChenHyp2} with $\omega_n$ as in~\eqref{eq:omegaRedef2} it suffices to show that for $M \in [x^{1/3}, x^{2/3}]$,
\[ 
\begin{split}
&\sum_{\substack{r \\ (r,QW) = 1}} \mu(r)^2 \lambda_r \Biggl( \sum_{\substack{QWx \leq mp < 2QWx \\ mp \equiv 2\pmod{r} \\ mp \equiv Wc_0 + b + 2\pmod{QW} \\ M \leq m < 2M }} a_m  e\left(\left(\frac{a}{q} + \frac{\beta}{W}\right) mp \right) \\
& \qquad - 
\frac{\varphi(QW)}{\varphi(QWr)} \sum_{\substack{QWx \leq mp < 2QWx \\ mp \equiv Wc_0+b+2\pmod{QW} \\ M \leq m < 2M }} a_m  e\left(\left(\frac{a}{q} + \frac{\beta}{W}\right) mp \right) \Biggr) \ll \frac{x}{(\log x)^{210}}.
\end{split}
\]

In case $q \mid Q$, this follows from Lemma~\ref{le:MajorArcsTypeII} applied twice (once with the $r=1$ term only), recalling~\eqref{eq:betanewbound2} and noting that we may add the restriction $(mp, QWr)=1$ in the second sum above at a negligible cost, since for each $r$ there are $O(x^{0.9})$ values of $mp$ with $(mp, QWr) > 1$. In case $q \nmid Q$, note that $q/(q, (QW)^2) > (\log N)^{A}/W^2$ and recall~\eqref{eq:betanewbound2}. We estimate the two sums separately.
The easier second sum can be estimated by Lemma~\ref{le:TypeISum} with $r = 1$. The first sum can be estimated by Lemma~\ref{le:TypeIIminorarc} (after factorizing $\lambda_r$) with $C = 20000$ once $A$ is large enough.

Hypothesis~\eqref{eq:ChenHyp2} for $B_2$ follows similarly noticing that
\[ \1_{W(Qn + c_0) + b + 2 \in B_2} = \sum_{\substack{mp = W(Qn + c_0) + b + 2 \\ p \geq x^{1/10}}} a_m, \]
where $a_m = 1$ if $m = p_1p_2$ for some $x^{1/3-\varepsilon} \leq p_1 \leq p_2 \leq (L_2(2x)/p_1)^{1/2}$ and $a_m = 0$ otherwise; thus $a_m$ is supported on $m \in [x^{2/3-2\varepsilon}, x^{2/3+o(1)}]$, so that our type II results (Lemmas~\ref{le:MajorArcsTypeII} and~\ref{le:TypeIIminorarc}) are still applicable. 

\subsection{Establishing \eqref{eq:ChenHyp3}}

In case $q \mid Q$, by partial summation it is enough to prove~\eqref{eq:ChenHyp3} in case $\beta = 0$ (strictly speaking one should consider the interval $n \in [x,x']$ instead of $n \in [x,2x)$ but this makes no difference). Since $q \mid Q$, we have $\omega_n \equiv 1$. By a change of variables $n' = W(Qn+c_0)+b+2$, it suffices to show that
\[  \sum_{\substack{QWx \leq n' < 2QWx \\ n' \equiv Wc_0+b+2\pmod{QW}}} \1_{n' \in B_j} = \frac{\delta(B_j)+o(1)}{\varphi(QW)} \cdot \frac{QWx}{\log x}, \]
which follows easily from the prime number theorem in arithmetic progressions. In case $q \nmid Q$, both sides of~\eqref{eq:ChenHyp3} are easily shown to be small using the argument from the previous subsection: the left hand side can be estimated by Lemma~\ref{le:TypeISum} and the right hand side can be estimated by Lemma~\ref{le:GeomSerII}.

\appendix

\section{Proof of generalized Chen's theorem}
\label{app:Chen}

In this section we prove Theorem~\ref{thm:chen}.

\subsection{The linear sieve}

For a (finitely supported) sequence $\CA = (a_m)$ of non-negative numbers we write $|\CA| = \sum_m a_m$ and $\CA_d = (a_{dm})_{m}$. We also define a sieving function
\[
S(\CA, z) = \sum_{(m, P(z)) = 1} a_m,
\]
where 
\[
P(z) = \prod_{p < z} p.
\]

In order to bound $S(\CA, z)$ we need some information about $\CA$. We will assume that, for all square-free integers $d$, we have
\[
|\CA_d| = \frac{g(d)}{d} X + r(\CA, d),
\]
where $g(d)$ is multiplicative and $X$ is independent of $d$. Let further
\[
V(z) = \prod_{p \mid P(z)} \left(1-\frac{g(p)}{p}\right).
\] 

We will use the linear sieve with a well-factorable error term due to Iwaniec \cite{Iw80}. For the following statement, see~\cite[Theorems 12.19 and 12.20]{Cribro}
\begin{lemma}
\label{le:lisieve}
Let $2 \leq z \leq D^{1/2}$ and $s = \log D/\log z$. Let $\varepsilon > 0$ be small enough and let $L(\varepsilon) = e^{1/\varepsilon^3}$. Assume that, for some absolute constant $K > 1$,
\[
\prod_{z_1 \leq p < z_2} \left(1-\frac{g(p)}{p}\right)^{-1} \leq K \frac{\log z_2}{\log z_1}
\]
for all $z_2 \geq z_1 \geq 2$. Then
\[
S(\CA, z) \leq X V(z) \left(F(s) + O_K(\varepsilon) \right) + \sum_{l < L(\varepsilon)} \sum_{d \mid P(z)} \lambda_l^+(d)r(\CA, d)
\]
and
\[
S(\CA, z) \geq X V(z) \left(f(s) - O_K(\varepsilon) \right) - \sum_{l < L(\varepsilon)} \sum_{d \mid P(z)} \lambda_l^-(d)r(\CA, d).
\]
Here, for each $l$, $\lambda_l^\pm$ are well-factorable functions of level $D$, and $F, f \colon [1, \infty) \to \mathbb{R}_{\geq 0}$ are the continuous solutions to the system
\[
\begin{cases}
sF(s) = 2e^\gamma & \text{if $1 \leq s \leq 3$;} \\
sf(s) = 0 &\text{if $1 \leq s \leq 2$;} \\
(sF(s))' = f(s-1) & \text{if $s > 3$;} \\
(sf(s))' = F(s-1) &\text{if $s > 2$}.
\end{cases}
\]
\end{lemma}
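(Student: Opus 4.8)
The plan is to deduce Lemma~\ref{le:lisieve} directly from the standard formulation of the linear (sieve dimension one) sieve with Iwaniec's well-factorable remainder term, as presented in \cite[Theorems 12.19 and 12.20]{Cribro}; the content of the proof is then essentially a verification that our hypotheses match the ones required there and that the conclusion is a faithful transcription. First I would note that the assumed bound
\[
\prod_{z_1 \le p < z_2}\left(1-\frac{g(p)}{p}\right)^{-1} \le K\frac{\log z_2}{\log z_1}
\]
is precisely the linear sieve dimension condition (often written $(\Omega_1(K))$), so the Rosser--Iwaniec $\beta$-sieve in dimension $\kappa = 1$ applies, and the continuous functions $F$ and $f$ in the conclusion are then the classical upper and lower linear sieve functions, uniquely determined by the delay-differential system quoted in the statement (so that $F(s) = 2e^{\gamma}/s$ on $[1,3]$, $f(s) = 0$ on $[1,2]$, and $(sF(s))' = f(s-1)$, $(sf(s))' = F(s-1)$ thereafter). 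With $D$ the level of the well-factorable decomposition and $s = \log D/\log z$, Theorems 12.19 and 12.20 of \cite{Cribro} then give exactly the upper and lower bounds for $S(\CA,z)$ in terms of $XV(z)(F(s)+O_K(\varepsilon))$ and $XV(z)(f(s)-O_K(\varepsilon))$, with remainders supported on $d\mid P(z)$ with $d<D$.

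The one point deserving a word is the shape of the remainder. Here I would invoke Iwaniec's theorem \cite{Iw80} (see also \cite[Chapter 12]{Cribro}) that the truncated $\beta$-sieve weights $\lambda^\pm(d)$ can be written as a sum of boundedly many --- at most $L(\varepsilon) = e^{1/\varepsilon^3}$, say --- well-factorable functions of level $D$, the number of summands growing as $\varepsilon \to 0$ because approaching the optimal sieve functions requires more iterations of Buchstab's identity. Writing $\lambda^\pm = \sum_{l < L(\varepsilon)} \lambda_l^\pm$ with each $\lambda_l^\pm$ well-factorable of level $D$ then puts the remainder in the stated form $\sum_{l < L(\varepsilon)} \sum_{d\mid P(z)} \lambda_l^\pm(d)\, r(\CA,d)$, and the $O_K(\varepsilon)$ loss in the main term is exactly the cost of this finite truncation.

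Since we merely quote established results, there is no genuine obstacle in the deduction itself; the substantive work is entirely contained in \cite{Iw80}, whose main difficulty is the combinatorial analysis of the support of the Rosser--Iwaniec weights needed to realise them as a bounded sum of convolutions $\gamma \ast \delta$ with freely splittable ranges for $\gamma$ and $\delta$. For our purposes the precise value of $L(\varepsilon)$ is irrelevant: all that matters downstream, in the proof of Theorem~\ref{thm:chen}, is that $L(\varepsilon)$ is bounded in terms of $\varepsilon$ and that each $\lambda_l^\pm$ is genuinely well-factorable of level $D$, so that the error terms can be absorbed using the distributional input recorded in Hypothesis~\ref{hyp:ChenHyp}.
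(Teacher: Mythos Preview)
Your proposal is correct and matches the paper's treatment: the paper does not give an independent proof of this lemma but simply cites \cite[Theorems 12.19 and 12.20]{Cribro}, which is exactly what you do, with some additional (accurate) commentary on why the hypotheses line up and how the well-factorable decomposition arises.
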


\subsection{Introducing Chen's weights}
Write $\CA = (a_m)$ for the sequence defined by
\[ a_m = \begin{cases} \omega_n \cdot \1_{L_1(n) \in \Prime} & m = L_2(n)\text{ for some } x \leq n < 2x \\ 0 & \text{otherwise.} \end{cases} \]
Note that $\CA$ is supported on $L_2(x) \leq m < L_2(2x)$.

Using a slight modification of the weighted sieve method of Chen, we consider
\[
\begin{split}
S = \sum_{\substack{m \\ (m,P(x^{1/10})) = 1}} a_m \Biggl(1 &- \frac{1}{2} \sum_{\substack{x^{1/10} \leq p_1 < x^{1/3-\varepsilon} \\ p_1 \mid m}} 1 \\
&- \frac{1}{2} \sum_{\substack{m = p_1 p_2 p_3 \\ x^{1/10} \leq p_1 < x^{1/3-\varepsilon} \\ x^{1/3-\varepsilon} \leq p_2 \leq (L_2(2x)/p_1)^{1/2} \\ p_3 \geq x^{1/10}}} 1 - \sum_{\substack{m = p_1 p_2 p_3 \\ x^{1/3-\varepsilon} \leq p_1 \leq p_2 \leq (L_2(2x)/p_1)^{1/2} \\ p_3 \geq x^{1/10}}} 1 \Biggr).
\end{split}
\]
Observe that the quantity in the parenthesis above is positive only if $m = P_2$ or $p^2 \mid m$ for some $x^{1/10} \leq p < x^{1/3-\varepsilon}$. Since the number of those $m$ of the latter type is $O(x^{0.9})$, it suffices to show that
\[ S \gg \frac{\FS(\CL)}{(\log x)^2} \sum_{x \leq n < 2x} \omega_n. \]
Using the sieve notation, we can write
\[
\begin{split}
S &= S(\CA, x^{1/10}) - \frac{1}{2} \sum_{x^{1/10} \leq p < x^{1/3-\varepsilon}} S(\CA_p, x^{1/10}) \\
& \qquad \qquad \qquad - \frac{1}{2} \sum_{\substack{p_1, p_2, p_3  \\ x^{1/10} \leq p_1 < x^{1/3-\varepsilon} \\ x^{1/3-\varepsilon} \leq p_2 \leq (L_2(2x)/p_1)^{1/2} \\ p_3 \geq x^{1/10}}} a_{p_1 p_2 p_3} - \sum_{\substack{p_1, p_2, p_3 \\ x^{1/3-\varepsilon} \leq p_1 \leq p_2 \leq (L_2(2x)/p_1)^{1/2} \\ p_3 \geq x^{1/10}}} a_{p_1 p_2 p_3}\\
&= S_1 - \frac{1}{2} S_2 - \frac{1}{2} T_1 - T_2,
\end{split}
\]
say.

\subsection{Handling $S_1$ and $S_2$}
Write 
\[
X = \frac{u_1}{\varphi(u_1)} \sum_{x \leq n < 2x} \frac{\omega_n}{\log L_1(n)}
\]
and let $g_1$ be the multiplicative function defined by
\[ 
g_1(d) = 
\begin{cases} 
0 & (d,u_2(u_2v_1 - u_1v_2)) > 1 \\ 
\frac{d\varphi(u_1)}{\varphi(u_1 d)} & (d,u_2(u_2v_1 - u_1v_2)) = 1. 
\end{cases} 
\]
Since $|\CA_d| = 0$ whenever $(d,u_2(u_2v_1 - u_1v_2)) > 1$, we have, by Hypothesis~\ref{hyp:ChenHyp},
\[ \sum_{d \mid P(x^{1/10})} \lambda_d \left( |\CA_d| - \frac{g_1(d)}{d} X \right) \ll (\log x)^{-10} \sum_{x \leq n < 2x} \omega_n. \]
for any well-factorable function $\lambda$ of level $D = x^{1/2-\varepsilon}$. 

Hence, by Lemma~\ref{le:lisieve} with $z = x^{1/10}$,
\[ S_1 \geq X V_1(x^{1/10}) (f(5-10\varepsilon) - o(1)) - O\left( (\log x)^{-9} \sum_{x \leq n < 2x} \omega_n \right), \]
where
\[ V_1(z) = \prod_{p \mid P(z)} \left(1 - \frac{g_1(p)}{p}\right) = \prod_{\substack{p < z \\ p\vert u_1,p\nmid u_2}} \left(1 - \frac{1}{p}\right) \prod_{\substack{p < z \\ p\nmid u_1u_2(u_1v_2 - u_2v_1)}} \left(1 - \frac{1}{p-1} \right). \]

Similarly, for any $2P \geq P' \geq P \in [x^{1/10}, x^{1/3-\varepsilon}]$ and any well-factorable bounded function $\lambda$ of level $x^{1/2-\varepsilon}/P$ we have, by Hypothesis~\ref{hyp:ChenHyp},
\[ 
\sum_{P \leq p < P'} \sum_{d \mid P(x^{1/10})} \lambda_d \left( |\CA_{pd}| - \frac{g_1(d)}{d} \frac{g_1(p)}{p} X \right) \ll (\log x)^{-10} \sum_{x \leq n < 2x} \omega_n, 
\]
since $|\CA_{pd}| = 0$ whenever $(d,u_2(u_2 v_1-u_1 v_2)) > 1$ and also $(p,d)=1$ whenever $d \vert P(x^{1/10})$.

By Lemma~\ref{le:lisieve} with $s = \log (x^{1/2-\varepsilon}/P)/ \log x^{1/10} = 5-10\varepsilon - 10\log P/\log x$, we obtain
\[ S_2 \leq \sum_{x^{1/10} \leq p < x^{1/3-\varepsilon}} \frac{g_1(p)}{p} X V_1(x^{1/10}) (F(5 -10\varepsilon- 10\log p/\log x) + o(1)) + O\left( (\log x)^{-9} \sum_{x\leq n < 2x} \omega_n \right). \]
Using the fact that
\[ X = \frac{u_1}{\varphi(u_1)} \cdot \frac{1+o(1)}{\log x} \sum_{x\leq n < 2x}\omega_n \]
since $\log L_1(n) = (1+o(1))\log L_1(x) = (1+o(1)) \log x$, we conclude that
\[ S_1 - \frac{1}{2} S_2 \geq  \frac{V(x^{1/10})}{\log x} \left( f(5-10\varepsilon) - \frac{1}{2}\int_{1/10}^{1/3-\varepsilon} F(5-10\varepsilon-10t) \frac{dt}{t} \right) (1-o(1)) \sum_{x\leq n < 2x} \omega_n, \]
where
\[ V(z) = V_1(z) \frac{u_1}{\varphi(u_1)} = \prod_{p\vert (u_1,u_2)} \frac{p}{p-1} \prod_{\substack{p \leq z \\ p \nmid u_1u_2(u_1v_2 - u_2v_1)}} \left(1 - \frac{1}{p-1} \right). \]

\subsection{Handling $T_1$ and $T_2$}
Let $j \in \{1, 2\}$. For $B_j$ defined as in~\eqref{eq:ChenB}, we write 
\[
X_j =  \sum_{\substack{x \leq n < 2x \\ L_2(n) \in B_j}} \omega_n
\]
and let $g_2$ be the multiplicative function defined by
\[ 
g_2(d) = 
\begin{cases} 
0 & (d,u_1(u_2v_1 - u_1v_2)) > 1 \\ 
\frac{d\varphi(u_2)}{\varphi(u_2 d)} & (d,u_1(u_2v_1 - u_1v_2)) = 1. 
\end{cases} 
\]
We consider the sequence $\CB^{(j)} = (b_m^{(j)})$  defined by
\[ b_m^{(j)} = \begin{cases} \omega_n \cdot \1_{L_2(n) \in B_j} & m = L_1(n) \text{ for some } x \leq n < 2x \\ 0 & \text{otherwise.} \end{cases} \]
Note that $\CB^{(j)}$ is supported on $L_1(x) \leq m < L_1(2x)$, and that, for $j = 1, 2$,
\[ T_j = \sum_{m \in \Prime} b_m^{(j)} \leq S(\CB^{(j)}, x^{1/6}). \] 
Note also that, for $j = 1, 2$,
\[ |\CB^{(j)}_d| = \sum_{\substack{x \leq n < 2x \\ d \vert L_1(n) }} \omega_n \1_{L_2(n) \in B_j}. \]
We may apply Hypothesis~\ref{hyp:ChenHyp}(2) to obtain that
\[ \sum_{d \mid P(x^{1/6})} \lambda_d \left( |\CB^{(j)}_d| - \frac{g_2(d)}{d} X_j \right) \ll (\log x)^{-10} \sum_{x \leq n < 2x} \omega_n \]
for any well-factorable function $\lambda_d$ of level $D = x^{1/2-\varepsilon}$.
Hence, by Lemma~\ref{le:lisieve} with $z = x^{1/6}$, we have
\[ T_j \leq X_j V_2(x^{1/6}) (F(3-6\varepsilon) + o(1)) + O \left( (\log x)^{-9} \sum_{x \leq n < 2x} \omega_n \right), \]
where
\[ V_2(z) = \prod_{\substack{p \leq z \\ p\vert u_2,p\nmid u_1}} \left(1 - \frac{1}{p}\right) \prod_{\substack{p \leq z \\ p\nmid u_1u_2(u_1v_2 - u_2v_1)}} \left(1 - \frac{1}{p-1} \right). \]
By Hypothesis~\ref{hyp:ChenHyp}(3) and using~\eqref{eq:density-Bj}, we have
\[ X_j \leq \frac{u_2}{\varphi(u_2)} \cdot \frac{\delta(B_j)+o(1)}{\log x} \sum_{x \leq n < 2x} \omega_n. \]
Hence
\[ T_j \leq \frac{V(x^{1/6})}{\log x} F(3-6\varepsilon) \delta(B_j) (1 + o(1)) \sum_{x \leq n < 2x} \omega_n, \]
since $V(z) = V_2(z) \frac{u_2}{\varphi(u_2)}$.

%We can handle $T_2$ similarly, obtaining
%\[ T_2 \leq \frac{V(x^{1/6})}{\log x} F(3-6\varepsilon) \int_{1/3-\varepsilon}^{1/3} \int_{\alpha_1}^{(1-\alpha_1)/2} \frac{d\alpha_2 d\alpha_1}{\alpha_1 \alpha_2 (1-\alpha_1-\alpha_2)} (1 + o(1)) \sum_{x \leq n < 2x} \omega_n. \]

\subsection{Final numerical work}

We may write
\[ V(z) = \Biggl( \prod_{p \vert (u_1,u_2)} \frac{p}{p-1} \prod_{\substack{p > 2 \\ p \vert u_1u_2(u_1v_2 - u_2v_1)}} \frac{p-1}{p-2} \Biggr) \prod_{2 < p \leq z} \left(1 - \frac{1}{p-1} \right), \]
and note that the two products in the parenthesis contribute $\gg \FS(\CL)$ by the definition of the singular series. Thus
\[ V(x^{1/6}) = \left(\frac{3}{5}+o(1)\right) V(x^{1/10}), \ \ V(x^{1/10}) \gg \frac{\FS(\CL)}{\log x}. \]
Since all the bounds we have obtained are continuous in $\varepsilon$ and the double integral in $\delta(B_2)$ from~\eqref{eq:density-Bj} tends to $0$ when $\varepsilon \to 0$, it suffices to verify that
\[ f(5) - \frac{1}{2} \int_{1/10}^{1/3} F(5-10t) \frac{dt}{t} - \frac{1}{2} \cdot \frac{3}{5} F(3) \int_{1/10}^{1/3} \int_{1/3}^{(1-\alpha_1)/2} \frac{d\alpha_2 d\alpha_1}{\alpha_1 \alpha_2 (1-\alpha_1-\alpha_2)} > 0 \]
just like in Chen's work. This is shown for instance in~\cite[Chapter 11]{HalbRichert}.

\section{Proof of the exponential sum estimates}
\label{app:Expsums}
In this appendix we prove a couple of very simple auxiliary lemmas as well as the exponential sum estimates stated in Section~\ref{sec:expsumest}.
\begin{lemma}
\label{le:GeomSerI}
Let $N \geq Q \geq 1$ and $c_0$ be integers, and let $\beta \in \R$. Then
\[
\sum_{\substack{N \leq n < 2N \\ n \equiv c_0 \pmod{Q}}} e(\beta n) = \frac{1}{Q} \sum_{\substack{N \leq n < 2N}} e(\beta n) +O(|\beta| N + 1).
\]
\end{lemma}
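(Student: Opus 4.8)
The plan is to reduce the statement to a comparison of the exponential sum over the single progression $n\equiv c_0\pmod Q$ with the sums over the other residue classes, and then to show that any two of these differ by only $O(|\beta|N+1)$. Write $T_c=\sum_{\substack{N\le n<2N\\ n\equiv c\ (Q)}}e(\beta n)$. Since $T_c$ depends only on $c\bmod Q$, I may assume $0\le c_0<Q$; and splitting $[N,2N)$ into residue classes modulo $Q$ shows that the right-hand side of the lemma is exactly $\frac1Q\sum_{c=0}^{Q-1}T_c$. Hence it suffices to prove $|T_{c_0}-T_c|\ll|\beta|N+1$ for every $0\le c<Q$, because then
\[
\left|T_{c_0}-\frac1Q\sum_{c=0}^{Q-1}T_c\right|\le\max_{0\le c<Q}|T_{c_0}-T_c|.
\]

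Next I would parametrise: writing $n=c+Qm$ gives $T_c=e(\beta c)\sum_{m\in I_c}e(\beta Qm)$, where $I_c$ is the set of integers in the real interval $[(N-c)/Q,(2N-c)/Q)$, which has length exactly $N/Q$. For $0\le c_0,c<Q$ the two real intervals are translates of each other by less than $1$, so their intersection $I:=I_{c_0}\cap I_c$ is an interval with $|I_{c_0}\setminus I|,|I_c\setminus I|=O(1)$ (an interval of length $<1$ contains at most one integer). Discarding these $O(1)$ boundary terms, each of modulus $\le1$, and factoring out the phase,
\[
T_{c_0}-T_c=(e(\beta c_0)-e(\beta c))\sum_{m\in I}e(\beta Qm)+O(1).
\]
Now the elementary estimates $|e(\beta c_0)-e(\beta c)|=|1-e(\beta(c-c_0))|\le 2\pi|\beta||c-c_0|\le 2\pi|\beta|Q$ and $\bigl|\sum_{m\in I}e(\beta Qm)\bigr|\le|I|\le N/Q+O(1)$, together with the hypothesis $Q\le N$, give $|T_{c_0}-T_c|\ll|\beta|N+1$, which completes the proof.

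I do not expect a genuine obstacle here. The one point needing a little care is the bookkeeping of the boundary set $I_{c_0}\triangle I_c$, and, more importantly, the temptation to instead detect the congruence $n\equiv c_0\pmod Q$ by additive characters and bound $\frac1Q\sum_n e((\beta+j/Q)n)$ via the geometric-series bound $\min(L,\|\cdot\|^{-1})$ should be resisted: the near-resonant terms $j/Q\approx-\beta$ there produce a spurious $\log Q$ loss that the statement does not tolerate. The direct comparison of $T_{c_0}$ with $T_c$ sidesteps this issue entirely.
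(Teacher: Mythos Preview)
Your proof is correct and follows essentially the same approach as the paper's: both parametrise $n=c+Qm$, absorb the shift of the summation interval into an $O(1)$ term, and use $|e(\beta c_0)-e(\beta c)|\ll|\beta|Q$ together with the trivial bound $N/Q$ on the inner sum. The only cosmetic difference is organisational---the paper shows each $T_c$ equals a single $c$-independent quantity $\sum_{N/Q\le k<2N/Q}e(\beta kQ)$ up to the error and then sums over $c$, whereas you compare $T_{c_0}$ with $T_c$ pairwise---but the underlying estimates are identical.
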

\begin{proof}
We can clearly assume that $0 \leq c_0 < Q$. Let us write $n = c_0 + kQ$, obtaining that 
\[
\begin{split}
\sum_{\substack{N \leq n < 2N \\ n \equiv c_0 \pmod{Q}}} e(\beta n) &= e(\beta c_0) \sum_{\substack{\frac{N-c_0}{Q} \leq k < \frac{2N-c_0}{Q}}} e(\beta k Q) = (1+O(\beta Q)) \sum_{\substack{\frac{N}{Q} \leq k < \frac{2N}{Q}}} e(\beta k Q) + O(1) \\
&= \sum_{\substack{\frac{N}{Q} \leq k < \frac{2N}{Q}}} e(\beta k Q) + O(|\beta| N + 1).
% = (1+O(\beta Q)) \frac{e\left(\beta \lfloor \frac{2N-c_0}{Q} \rfloor\right) - e\left(\beta \lceil\frac{N-c_0}{Q}\rceil\right)}{e(\beta)-1}. 
\end{split}
\]
Since the last expression is independent of $c_0$, summing over $0 \leq c_0 < Q$, we see that
\[
Q \sum_{\substack{\frac{N}{Q} \leq k < \frac{2N}{Q}}} e(\beta k Q) = \sum_{\substack{N \leq n < 2N}} e(\beta n) + O((|\beta| N + 1)Q),
\]
and the claim follows.
\end{proof}

\begin{lemma}
\label{le:GeomSerII}
Let $Q, q \geq 1$ and $a$ be integers such that $(a, q) = 1$ and $(Q, q) < q$. Assume that $|\alpha - a/q| \leq 1/(2qQ)$ and let $c_0 \in \mathbb{Z}$. Then
\[
\Biggl|\sum_{\substack{N \leq n < 2N \\ n \equiv c_0 \pmod{Q}}} e(\alpha n)\Biggr| \ll \frac{q}{(Q, q)}.
\]
\end{lemma}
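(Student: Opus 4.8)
The statement is a standard geometric-series estimate: I would collapse the restricted sum to a single geometric progression in one variable, bound it by $1/(2\|\alpha Q\|)$, and then show $\|\alpha Q\| \gg (Q,q)/q$ using the Diophantine hypothesis $|\alpha - a/q| \le 1/(2qQ)$ together with $(Q,q) < q$.

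\textbf{Step 1: reduce to a geometric progression.} After replacing $c_0$ by its least nonnegative residue modulo $Q$, I would write $n = c_0 + kQ$; then as $n$ runs over $[N,2N)$ in the residue class $c_0 \bmod Q$, the variable $k$ runs over an interval $I$ of consecutive integers, and
\[
\sum_{\substack{N \leq n < 2N \\ n \equiv c_0 \pmod{Q}}} e(\alpha n) = e(\alpha c_0) \sum_{k \in I} e(\alpha Q k).
\]
By the usual bound for a finite geometric sum, $\bigl|\sum_{k \in I} e(\theta k)\bigr| \le 1/(2\|\theta\|)$ whenever $\theta \notin \Z$ (using $|\sin \pi \theta| \ge 2\|\theta\|$), so it would remain to prove the lower bound $\|\alpha Q\| \gg (Q,q)/q$, which in particular forces $\|\alpha Q\| \neq 0$ so that the geometric-sum bound applies.

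\textbf{Step 2: lower bound for $\|\alpha Q\|$.} Here I would set $d = (Q,q)$ and write $q = dq'$, $Q = dQ'$ with $(Q',q')=1$; the hypothesis $(Q,q) < q$ gives $q' \ge 2$. Then $\alpha Q = aQ'/q' + \theta$ with $|\theta| = |\alpha - a/q|\,Q \le Q/(2qQ) = 1/(2q) \le 1/(2q')$. Since $(a,q)=1$ implies $(aQ',q')=1$ and $q' \ge 2$, the fraction $aQ'/q'$ is not an integer, so $\|aQ'/q'\| \ge 1/q'$; hence $\|\alpha Q\| \ge \|aQ'/q'\| - |\theta| \ge 1/q' - 1/(2q') = 1/(2q') = (Q,q)/(2q)$. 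Feeding this into Step 1 yields $\bigl|\sum_{n \equiv c_0}e(\alpha n)\bigr| \le 1/(2\|\alpha Q\|) \le q' = q/(Q,q)$, with absolute implied constant.

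\textbf{Main obstacle.} There is essentially no real difficulty; the only point needing a little care is making sure the perturbation $\theta$ does not overwhelm $\|aQ'/q'\|$, and this is exactly where the hypothesis $|\alpha - a/q| \le 1/(2qQ)$ enters, guaranteeing $|\theta| \le 1/(2q') \le \tfrac12\|aQ'/q'\|$. Everything else is routine.
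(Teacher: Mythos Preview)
Your proposal is correct and follows essentially the same approach as the paper: both substitute $n = c_0 + kQ$ to reduce to a geometric series, bound it by $\ll 1/\|\alpha Q\|$, and then use the Diophantine hypothesis together with $(Q,q)<q$ to show $\|\alpha Q\|\ge (Q,q)/(2q)$. Your write-up simply supplies more detail in Step~2 than the paper's terse one-line justification.
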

\begin{proof}
Let us write $n = c_0 + kQ$, obtaining that
\[
\Biggl|\sum_{\substack{N \leq n < 2N \\ n \equiv c_0 \pmod{Q}}} e(\alpha n)\Biggr| = \Biggl|\sum_{\substack{\frac{N-c_0}{Q} \leq k < \frac{2N-c_0}{Q}}} e(\alpha k Q) \Biggr| \ll \frac{1}{\Vert \alpha Q\Vert} \leq \frac{1}{\frac{1}{2q/(Q, q)}}.
\]
\end{proof}

\subsection{Major arc estimates}

\begin{proof}[Proof of Lemma~\ref{le:MajorArcsPrimes}]
By partial summation it is enough to prove the claim in case $\alpha = a/q$ (strictly speaking one should consider intervals $p, n \in [x, x']$ instead of $[x, 2x]$ but this makes no difference).
Since $q \mid Q$, the left hand side of the claim equals
\[
\begin{split}
&\sum_{r \leq x^{1/2-\varepsilon}} \max_{(c, rQ) = 1} \Biggl| \sum_{\substack{x \leq p < 2x \\ p \equiv c \pmod{rQ}}} 1 - \frac{Q}{\varphi(rQ)}  \sum_{\substack{x \leq n < 2x \\ n \equiv c \pmod{Q}}} \frac{1}{\log n}\Biggr| \\
&\leq \sum_{r \leq x^{1/2-\varepsilon}} \max_{(c, rQ) = 1} \Biggl| \sum_{\substack{x \leq p < 2x \\ p \equiv c \pmod{rQ}}} 1 - \frac{|\mathbb{P} \cap [x, 2x)|}{\varphi(rQ)}\Biggr| + O(x(\log x)^{-C_1-C_2}) \\
&\leq \sum_{d \leq x^{1/2-\varepsilon/2}} \max_{(c, d) = 1} \Biggl| \sum_{\substack{x \leq p < 2x \\ p \equiv c \pmod{d}}} 1 - \frac{|\mathbb{P} \cap [x, 2x)|}{\varphi(d)}\Biggr| + O((\log x)^{-C_1-C_2}),
\end{split}
\]
and the claim follows from the Bombieri-Vinogradov prime number theorem.
\end{proof}

\begin{proof}[Proof of Lemma~\ref{le:MajorArcsTypeII}]
Arguing similarly, Lemma~\ref{le:MajorArcsTypeII} reduces to showing
\[
\sum_{d \leq x^{1/2-\varepsilon/2}} \max_{(c,d) = 1} \Biggl| \sum_{\substack{x \leq mn < 2x \\ mn \equiv c \pmod{d} \\ M \leq m < 2M}} a_m b_n - \frac{1}{\varphi(d)} \sum_{\substack{x \leq mn < 2x \\ (mn, d) = 1 \\ M \leq m < 2M}} a_m b_n \Biggr| \ll \frac{x}{(\log x)^{2C_1+C_2+1}}
\]
which follows from type II information used in the proof of the Bombieri-Vinogradov prime number theorem, see e.g.~\cite[Theorem 17.4]{IwKo04}.
\end{proof}

\subsection{Minor arc estimates for type I sums}
Notice that all the minor arc estimates are trivial if $q > x$, so that we can always assume that $q \leq x$. Lemma~\ref{le:TypeISum} follows easily from the following slight variant of a lemma usually used in type I estimates.
\begin{lemma}
\label{le:expsumwdiv}
Let $q \geq 1$ and $a$ be integers such that $(a, q) = 1$ and assume that $|\alpha - a/q| < 1/q^2$. For any $x \geq M \geq 1$ and any integer $k \geq 2$,
\[
\sum_{M \leq m < 2M} \tau_k(m) \min\left\{\frac{x}{M}, \frac{1}{\Vert \alpha m \Vert}\right\} \ll_k \left(\frac{x}{q^{1/2}} + x^{1/2}M^{1/2} + x^{1/2}q^{1/2}\right) (\log 3x)^{k^2/2}. 
\]
\end{lemma}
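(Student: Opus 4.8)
The plan is to deduce this from Cauchy--Schwarz and the standard spacing estimate for the points $\{\alpha m\}$, with the divisor weight $\tau_k$ absorbed through its second moment; this is the familiar type~I argument, the extra $\tau_k$ being handled as in the works cited above. First one may assume $q\le x$: if $q>x$ then $\min\{x/M,1/\Vert\alpha m\Vert\}\le x/M$ and the trivial bound $\sum_{m<2M}\tau_k(m)\ll_k M(\log 3M)^{k-1}$ gives a total $\ll_k x(\log 3x)^{k-1}$, which is $\le x^{1/2}q^{1/2}(\log 3x)^{k^2/2}$ because $q^{1/2}>x^{1/2}$ and $k-1\le k^2/2$ for $k\ge 2$. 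Now put $U:=x/M$. By Cauchy--Schwarz,
\[
\sum_{M\le m<2M}\tau_k(m)\min\Bigl\{U,\frac{1}{\Vert\alpha m\Vert}\Bigr\}\le\Bigl(\sum_{M\le m<2M}\tau_k(m)^2\Bigr)^{1/2}\Bigl(\sum_{M\le m<2M}\min\Bigl\{U^2,\frac{1}{\Vert\alpha m\Vert^2}\Bigr\}\Bigr)^{1/2},
\]
and $\sum_{M\le m<2M}\tau_k(m)^2\le\sum_{m\le 2M}\tau_k(m)^2\ll_k M(\log 3M)^{k^2-1}$ by the classical moment bound; so everything reduces to estimating the second moment $\Sigma:=\sum_{M\le m<2M}\min\{U^2,\Vert\alpha m\Vert^{-2}\}$.

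For $\Sigma$ I would use that on any interval $I$ of length $\le q/2$ the points $\alpha m\pmod 1$, $m\in I$, are $\tfrac1{2q}$-separated, since for $0<|m-m'|<q$ one has $\Vert\alpha(m-m')\Vert\ge \tfrac1q-\tfrac{|m-m'|}{q^2}$. Hence $\#\{m\in I:\Vert\alpha m\Vert\le t\}\ll qt+1$, and integrating in $t$ gives $\sum_{m\in I}\min\{U^2,\Vert\alpha m\Vert^{-2}\}\ll qU+U^2$. Splitting $[M,2M)$ into $\ll 1+M/q$ such intervals yields the general bound $\Sigma\ll(1+M/q)(qU+U^2)$. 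Inserting this and using $UM=x$, the four resulting contributions are $x^{1/2}q^{1/2}$, $x/q^{1/2}$, $x^{1/2}M^{1/2}$ and $x/M^{1/2}$, each times $(\log 3M)^{(k^2-1)/2}\le(\log 3x)^{k^2/2}$.

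The only contribution not already of the required shape is $x/M^{1/2}$, and it exceeds the other three exactly when $M<q$ and $Mq<x$. If $q<4M$ it is harmless, since then $x/M^{1/2}<2x/q^{1/2}$. If $q\ge 4M$ then $0<m<q$ for every $m$ in the range, so no such $m$ is divisible by $q$, and therefore $\Vert\alpha m\Vert\ge \tfrac1q-\tfrac{2M}{q^2}\ge\tfrac1{2q}$, i.e.\ $\Vert\alpha m\Vert^{-1}\le 2q$; thus the cap $U$ may be replaced by $\widetilde U:=\min\{U,2q\}$ and the spacing argument (now over the single interval $[M,2M)$, which has length $<q/2$) gives $\Sigma\ll q\widetilde U+\widetilde U^2\ll q\min\{U,q\}$. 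This leads to $\sum_{m}\tau_k(m)\min\{\cdots\}\ll_k M^{1/2}q^{1/2}\min\{U,q\}^{1/2}(\log 3x)^{k^2/2}$, which equals $x^{1/2}q^{1/2}(\log 3x)^{k^2/2}$ when $U\le q$ and equals $M^{1/2}q(\log 3x)^{k^2/2}\le x^{1/2}q^{1/2}(\log 3x)^{k^2/2}$ when $U>q$ (i.e.\ $Mq<x$). In every case the bound lies within the stated range. The main obstacle is thus not any single inequality but the realisation that plain Cauchy--Schwarz overcounts the $O(1)$ values of $m$ at which $\Vert\alpha m\Vert$ is anomalously small, and that in the offending range $q\ge 4M$ the absence of multiples of $q$ in the window $[M,2M)$ --- equivalently the cheap bound $\Vert\alpha m\Vert^{-1}\le 2q$ --- is exactly what removes the surplus.
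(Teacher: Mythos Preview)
Your proof is correct, but the paper's argument is considerably shorter because it applies Cauchy--Schwarz differently. Instead of writing the sum as $\sum \tau_k(m)\cdot\min\{\cdots\}$ and squaring the $\min$, the paper writes it as $\sum (\tau_k(m)\min^{1/2})\cdot\min^{1/2}$, obtaining
\[
\Bigl(\sum_{M\le m<2M}\tau_k(m)\min\{\tfrac{x}{M},\tfrac{1}{\Vert\alpha m\Vert}\}\Bigr)^2\le\Bigl(\sum_{M\le m<2M}\tau_k(m)^2\cdot\tfrac{x}{M}\Bigr)\Bigl(\sum_{M\le m<2M}\min\{\tfrac{x}{M},\tfrac{1}{\Vert\alpha m\Vert}\}\Bigr).
\]
The first factor is $\ll_k x(\log 3x)^{k^2-1}$ as in your argument, and the second is the \emph{first} moment of the $\min$, for which the standard estimate $\ll(x/q+M+q)\log x$ (e.g.\ Iwaniec--Kowalski, before Lemma~13.7) applies directly. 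This yields exactly the three terms $x/q^{1/2}$, $x^{1/2}M^{1/2}$, $x^{1/2}q^{1/2}$ with no surplus. Your route via the second moment $\sum\min\{U^2,\Vert\alpha m\Vert^{-2}\}$ produces the extra contribution $x/M^{1/2}$, which you then have to eliminate by a separate case analysis when $q\ge 4M$; this works, but the paper's splitting avoids the issue entirely by keeping one copy of the $\min$ unsquared and bounding it by $x/M$ inside the $\tau_k$-weighted factor.
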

\begin{proof} 
By the Cauchy-Schwarz inequality
\[
\begin{split}
&\left(\sum_{M \leq m < 2M} \tau_k(m) \min\left\{\frac{x}{M}, \frac{1}{\Vert \alpha m \Vert}\right\}\right)^2\\
&\leq \left(\sum_{M \leq m < 2M} \tau_k(m)^2 \frac{x}{M} \right) \cdot \left(\sum_{M \leq m < 2M} \min\left\{\frac{x}{M}, \frac{1}{\Vert \alpha m \Vert}\right\}\right)\\
&\ll_k x (\log 3x)^{k^2-1} \cdot \left(\frac{x}{q} + M + q\right) (\log 3x). 
\end{split}
\]
by a standard ingredient in type I estimates (see e.g.~\cite[Formula before Lemma 13.7]{IwKo04}).
\end{proof}

\begin{proof}[Proof of Lemma~\ref{le:TypeISum}]
We can clearly assume that $M \leq x^{1/2}/Q$. Write $S$ for the left hand side of the claim, and write $\overline{m}$ for the inverse of $m \pmod{rQ}$. Then
\[
S \leq \sum_{r \leq x^{1/2}} \max_{(c, rQ) = 1} \sum_{\substack{M \leq m < 2M \\ (m, rQ) = 1}} \Biggl| \sum_{\substack{x/m \leq n < 2x/m \\ n \equiv c\overline{m} \pmod{rQ}}} e\left(\alpha mn \right)\Biggr|.
\]
Writing $n = c\overline{m} + k rQ$, with $k$ running over an interval with elements of size $x/(mrQ) \gg 1$, and summing the geometric series, we see that
\[
\begin{split}
S &\ll \sum_{r \leq x^{1/2}} \sum_{M \leq m < 2M} \min\left\{\frac{x}{mrQ}, \frac{1}{\Vert \alpha mrQ\Vert}\right\} \leq \sum_{d \leq 2Mx^{1/2}} \tau(d) \min\left\{\frac{x/Q}{d}, \frac{1}{\Vert (\alpha Q) d\Vert}\right\}.
\end{split}
\]

By our assumption on $\alpha$, we have $|\alpha Q - \frac{Qa/h}{q/h}| < 1/q^2 \leq 1/(q/h)^2$. Hence, after a dyadic division on $d$, Lemma~\ref{le:expsumwdiv} gives
\[
S \ll \left(\frac{x}{Q} \cdot \frac{1}{(q/h)^{1/2}} + \left(\frac{x}{Q}\right)^{1/2} (Mx^{1/2})^{1/2} + \left(\frac{x}{Q}\right)^{1/2} (q/h)^{1/2}\right) (\log x)^3. 
\]
\end{proof}

\subsection{Minor arc estimates for type II sums}
In proof of Lemma~\ref{le:TypeIIminorarc} we use the following auxiliary exponential sum estimate due to Mikawa~\cite{Mikawa}, in the proof of which one Fourier expands the $\min$-function on the left hand side and uses Weyl differencing.
\begin{lemma}
\label{le:sumfrsq}
Let $|\alpha-a/q| < 1/q^2$ for some $(a, q) = 1$. For $0 < M, J \leq x$, one has
\[
\begin{split}
M \sum_{M \leq m < 2M} \sum_{J \leq j < 2J} \tau_3(j) \min\left\{\frac{x}{m^2 j}, \frac{1}{\Vert{\alpha m^2 j}\Vert}\right\} \ll \left(M^2 J + x^{3/4}\left(\frac{x}{q} + \frac{x}{M} + q\right)^{1/4}\right) (\log x)^8.
\end{split}
\]
\end{lemma}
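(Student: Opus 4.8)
\emph{Proof plan.} This estimate is due to Mikawa~\cite{Mikawa}, and I would reproduce that argument, whose skeleton is: (i) linearise the $\min$-function by a Fourier majorisation; (ii) after disposing of the divisor weight by Cauchy--Schwarz, treat the remaining oscillatory sum $\sum_m\sum_j\tau_3(j)e(h\alpha m^2j)$ by Weyl differencing in the quadratic variable $m$; (iii) bound the resulting sum of a $\min$ against the frequency $2h\alpha$ by the classical estimate for such sums, and optimise the free parameter. Note first that we may assume $q\le x$, since otherwise every term with $\|\alpha m^2j\|\neq0$ is $O(q)$-bounded and the inequality is trivial. After a dyadic decomposition assume $m\asymp M$ and $j\asymp J$; since both $x/(m^2j)$ and $\|\alpha m^2j\|^{-1}$ are monotone in each of $m$ and $j$, a two-dimensional partial summation reduces matters to bounding, for subintervals $I\subseteq[M,2M)$ and $I'\subseteq[J,2J)$,
\[
M\sum_{m\in I}\sum_{j\in I'}\tau_3(j)\min\Bigl\{Y,\frac{1}{\|\alpha m^2j\|}\Bigr\},\qquad Y\asymp\frac{x}{M^2J}.
\]

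Next I would linearise the $\min$ by the standard Fourier majorisation (a dyadic decomposition into indicators of short intervals mod $1$, each replaced by its Selberg majorant exactly as in Section~\ref{sec:smoothBohrCutoff}; see also \cite[Ch.~13]{IwKo04}): for an integer parameter $H\ge1$,
\[
\min\Bigl\{Y,\frac{1}{\|\theta\|}\Bigr\}\ \ll\ \Bigl(\frac{Y}{H}+\log 2Y\Bigr)+\sum_{1\le|h|\le H}c_h\,e(h\theta),\qquad |c_h|\ll\frac{Y}{|h|}+\frac{Y}{H}+\log 2|h|.
\]
The ``main term'' $Y/H+\log 2Y$, summed trivially over $m\asymp M$, $j\asymp J$ using $\sum_{m\asymp M}m^{-2}\ll M^{-1}$ together with $Y=x/(m^2j)$, $\sum_{j\asymp J}\tau_3(j)\ll J(\log x)^2$ and $\sum_{j\asymp J}\tau_3(j)/j\ll(\log x)^2$, contributes $\ll\bigl(x/H+M^2J\bigr)(\log x)^{O(1)}$, which is acceptable once $H\gg x/(M^2J)$.

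The crux is the oscillatory part: for each $1\le h\le H$, with the coefficient $c_h$ pulled out, estimate $\sum_{m\in I}\sum_{j\in I'}\tau_3(j)\,e(h\alpha m^2j)$. Applying Cauchy--Schwarz in $j$ and the bound $\sum_{j\asymp J}\tau_3(j)^2\ll J(\log x)^8$ — this is where the exponent $8$ in the final bound originates — one reduces to $\bigl(J(\log x)^8\bigr)^{1/2}\bigl(\sum_{j\asymp J}\bigl|\sum_{m\in I}e(h\alpha j\,m^2)\bigr|^2\bigr)^{1/2}$, and Weyl differencing the quadratic inner sum gives $\bigl|\sum_{m\in I}e(\beta m^2)\bigr|^2\ll M+\sum_{1\le|\ell|<M}\min\{M,\|2\beta\ell\|^{-1}\}$ with $\beta=h\alpha j$. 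Summing over $j\asymp J$ and writing $n=\ell j\in[1,2MJ)$ with multiplicity $\le\tau(n)$ turns the inner double sum into $MJ+\sum_{n<2MJ}\tau(n)\min\{M,\|2h\alpha n\|^{-1}\}$, and the last sum is handled by the classical estimate for a sum of a $\min$ against a well-approximable frequency (cf.~\cite[Lemma~13.7]{IwKo04}): if $|2h\alpha-a'/q'|<(q')^{-2}$ with $(a',q')=1$ then it is $\ll(1+MJ/q')(M+q')(\log x)^{O(1)}$. Reassembling, summing over $1\le h\le H$ against the $|c_h|$, passing from $q'$ back to $q$ via $|\alpha-a/q|<1/q^2$, and finally choosing $H$ to balance $x/H$ against the Weyl contribution, I would arrive at $\ll\bigl(M^2J+x^{3/4}(x/q+x/M+q)^{1/4}\bigr)(\log x)^8$.

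The step I expect to be the main obstacle is (iii): controlling the denominator $q'$ of the rational approximation to $2h\alpha$ uniformly as $h$ ranges over $[1,H]$. From $|\alpha-a/q|<1/q^2$ one obtains $q'=q/(q,2h)$ only when $(q,2h)^2\ge2h$, so in general a fresh appeal to Dirichlet's theorem is needed, the $h$ with $q/(q,2h)$ small (which occur only when $H$ is large relative to $q$) must be separated off, and the remaining $(q,2h)$-type factors must be summed against the weights $|c_h|$ without losing more than a power of $\log x$ — all of which has to be reconciled with the generous but not infinite $(\log x)^8$ after the optimisation over $H$ is performed in each relevant range of $M,J,q$. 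A secondary, routine but technical, point is the two-dimensional partial summation used at the outset to freeze the cut-off $x/(m^2j)$ at the constant $Y\asymp x/(M^2J)$.
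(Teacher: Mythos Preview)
Your proposal is correct and follows exactly the approach the paper indicates: the paper does not give a detailed proof of this lemma but simply attributes it to Mikawa~\cite{Mikawa} and notes that ``one Fourier expands the $\min$-function on the left hand side and uses Weyl differencing'', which is precisely your plan (i)--(iii). Your identification of the uniformity in $h$ of the rational approximation to $2h\alpha$ as the main technical point is appropriate, and the $(\log x)^8$ arising from $\sum_{j\asymp J}\tau_3(j)^2$ is exactly the source of that exponent in Mikawa's bound.
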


\begin{proof}[Proof of Lemma~\ref{le:TypeIIminorarc}]
Let us first note that in case $D \leq (\log x)^C$ we can combine $dr = d' \in [D', 4D']$ with $2D' = 2DR \leq 2(\log x)^C M/x^{1/2} \leq x/(MQ (\log x)^C)$ which is still at most the upper bound for $D$ in Lemma~\ref{le:TypeIIminorarc}. This allows us to assume that $R=1$ in case $D \leq (\log x)^C$; combining $dr = d'$ introduces at worst a divisor function $\tau(d')$, but the claim follows in any case if we can show the claimed upper bound for
%  Hence using Cauchy-Schwarz to remove the divisor function $\tau(d')$ coming from this combining (as in proof of Lemma~\ref{le:expsumwdiv}), it is enough to show that when either $D > (\log x)^C$, or $D \leq (\log x)^C$ and $R = 1$, one has
% \[
% \begin{split}
% &\sum_{\substack{D \leq d < 2D}} \max_{(c, dQ) = 1} \sum_{\substack{R \leq r < 2R \\ (r,c'dQ) = 1}} \left| \sum_{\substack{x \leq mn < 2x \\ mn \equiv c' \pmod{r} \\ mn \equiv c \pmod{dQ} \\ M \leq m < 2M}} a_m b_n e\left(\alpha mn \right)\right| \\
% &\leq \frac{x}{Q} \left(\frac{(\log x)^{2C}}{(q/h)^{1/8}} + (\log x)^{2C} Q \frac{q^{1/8}}{x^{1/8}} + \frac{1}{(\log x)^{C/8}} \right) (\log x)^5.
% \end{split}
% \]
% It suffices to consider
\[ I = \sum_{D \leq d < 2D}\tau(d) \sum_{\substack{R \leq r < 2R \\ (r, c'dQ) = 1}} \theta(d,r) \sum_{\substack{x \leq mn < 2x \\ mn \equiv c' \pmod{r} \\ mn \equiv c_d \pmod{dQ} \\ M \leq m < 2M}} a_m b_n e\left( \alpha mn \right), \]
for any choice of residue class $c_d \pmod{dQ}$ with $(c_d, dQ) = 1$ and any choice of $\theta(d,r) \in \C$ with $|\theta(d,r)| = 1$. By the Cauchy-Schwarz inequality, we have
\[ |I|^2 \ll DM (\log x)^6 \sum_{D \leq d < 2D} \sum_{M \leq m < 2M} \Biggl| \sum_{\substack{R \leq r < 2R \\ (r,c'dQ) = 1}} \theta(d,r) \sum_{\substack{x/m \leq n < 2x/m \\ mn \equiv c' \pmod{r} \\ mn \equiv c_d \pmod{dQ}}} b_n e(\alpha mn) \Biggr|^2. \]
Expanding out the square, moving the sum over $m$ inside, and noting that $|b_{n_1} b_{n_2}| \leq \tau(n_1)^2 + \tau(n_2)^2$, we obtain
\[ |I|^2 \ll DM (\log x)^6 \sum_{D \leq d < 2D} \sum_{\substack{R \leq r_1, r_2 < 2R \\ (r_1, c'dQ) = (r_2, c'dQ) = 1}} \sum_{\frac{x}{2M} \leq n_1, n_2 \leq \frac{2x}{M}} \tau(n_1)^2 \Biggl| \sum_{\substack{M \leq m < 2M \\ x/n_j \leq m < 2x/n_j \\ mn_1 \equiv c' \pmod{r_1} \\ mn_2 \equiv c' \pmod{r_2} \\ mn_1 \equiv mn_2 \equiv c_d \pmod{dQ}}} e(\alpha m (n_1 - n_2) ) \Biggr|. \]
The simultaneous congruences above are soluble if and only if $(n_1, r_1dQ) = (n_2, r_2dQ) = 1$ and $n_1 \equiv n_2 \pmod{dQ (r_1,r_2)}$, in which case they reduce to the single equation $m \equiv b \pmod{dQ [r_1,r_2]}$ for some $b$. Thus, substituting $m = b+kdQ[r_1, r_2]$ (and noticing $DQR^2 \leq M$), we see that the inner sum over $m$ is 
\[ \ll \min\left( \frac{M}{dQ [r_1,r_2]}, \frac{1}{\| \alpha(n_1-n_2) dQ [r_1,r_2] \|} \right). \]
Writing $n_1 = n_2 + \ell \cdot dQ(r_1,r_2)$, we have
\[ \alpha(n_1 - n_2) dQ [r_1, r_2] = \alpha \ell (dQ)^2 (r_1,r_2) [r_1,r_2] = \alpha \ell (dQ)^2 r_1r_2, \]
so that
\[ 
\begin{split}
|I|^2 &\ll DM (\log x)^6 \sum_{D \leq d < 2D} \sum_{R \leq r_1, r_2 < 2R} \sum_{\frac{x}{2M} \leq n_1 \leq \frac{2x}{M}} \tau(n_1)^2\sum_{|\ell| \leq \frac{2x}{MdQ(r_1,r_2)}} \min \left( \frac{M}{dQ [r_1,r_2]}, \frac{1}{\| \alpha \ell (dQ)^2 r_1r_2\|} \right) \\
&\ll Dx (\log x)^9 \sum_{D \leq d < 2D} \sum_{R \leq r_1, r_2 < 2R} \sum_{|\ell| \leq \frac{2x}{MdQ(r_1,r_2)}} \min \left( \frac{M}{dQ [r_1,r_2]}, \frac{1}{\| \alpha \ell (dQ)^2 r_1r_2\|} \right).
\end{split}
\]
The terms with $\ell = 0$ contribute to the right hand side
\[ \ll D^2 x (\log x)^9 \sum_{R \leq r_1, r_2 < 2R} \frac{M}{DQ [r_1,r_2]} \ll D^2 x (\log x)^{10} \frac{M}{DQ}  \ll \frac{x^2}{Q^2} (\log x)^{-C+10} \]
by the assumption on $D$, which is acceptable. To treat the terms with $\ell \neq 0$, write $j = \ell r_1r_2$ so that
\[ 0 < |j| < 4 R^2 \cdot \frac{2x}{MDQ (r_1, r_2)} \leq \frac{8R^2x}{MDQ} \]
and that
\[ \frac{M}{dQ [r_1,r_2]} = \frac{M (r_1,r_2)}{dQ r_1r_2} \ll \frac{M}{DQR^2} \cdot \frac{R^2 x}{MDQ |j|} \ll \frac{x}{(dQ)^2 |j|}. \]
It follows that
\[ |I|^2 \ll \frac{x^2}{Q^2} (\log x)^{-C+10} +  Dx (\log x)^9 \sum_{D \leq d < 2D} \sum_{0 < |j| \leq 8R^2x/(MDQ)} \tau_3(j) \min\left( \frac{x}{(dQ)^2 |j|}, \frac{1}{\| \alpha (dQ)^2 j\|} \right). \]
By a dyadic division, it suffices to show that
\begin{equation}\label{eq:typeII} 
\begin{split}
&D \sum_{D \leq d < 2D} \sum_{J \leq j < 2J} \tau_3(j) \min\left( \frac{x/Q^2}{d^2j}, \frac{1}{\| (\alpha Q)^2 d^2 j\|} \right) \\
&\ll \frac{x}{Q^2} \left( \frac{(\log x)^{C}}{(q/h)^{1/4}} + (\log x)^{C} Q \left(\frac{q}{x}\right)^{1/4} + \frac{1}{(\log x)^{C/4}} \right) (\log x)^8 
\end{split}
\end{equation}
for $1 \leq J \leq 4R^2x/(DMQ)$.
To prove this we divide into two cases depending whether $D$ is large or small. 

\begin{case}
First assume that $D > (\log x)^C$. Note that, by assumption,
\[ \Biggl| \alpha Q^2 - \frac{a Q^2/h}{q/h} \Biggr| \leq \frac{1}{4 q^2 (\log x)^{2C}} \leq \frac{1}{(q/h)^2}. \]
It is also easy to see that $D, J \leq x/Q^2$. We may thus apply Lemma~\ref{le:sumfrsq} to bound the left hand side of \eqref{eq:typeII} by
\[  
\begin{split}
&\ll \left(D^2J + \left(\frac{x}{Q^2}\right)^{3/4} \left(\frac{x/Q^2}{q/h} + \frac{x/Q^2}{D} + q \right)^{1/4}\right) (\log x)^8 \\
&\ll \frac{x}{Q^2} \left(\frac{1}{(\log x)^C} + \frac{1}{(q/h)^{1/4}} + \frac{1}{(\log x)^{C/4}} + Q^{1/2}\left(\frac{q}{x}\right)^{1/4}\right) (\log x)^8
\end{split}
\]
by the upper bound on $J$ and the assumptions on $D$ and $R$.
\end{case}

\begin{case}
Now assume that $D \leq (\log x)^C$. Recall that in this case we can assume that $R = 1$. In this case, for each fixed $D \leq d < 2D$ we have by assumption
\[ \Biggl| \alpha (dQ)^2 - \frac{ a (dQ)^2}{q} \Biggr| \leq \frac{1}{q^2}. \]
Moreover the denominator of the fraction $a(dQ)^2/q$ is at least $q/hD^2 \geq q/(h(\log x)^{2C})$ after reducing it to the reduced form. Applying Lemma~\ref{le:expsumwdiv} to the inner sum over $j$ in~\eqref{eq:typeII} (noticing that $J \leq x/(D^2Q^2)$), we may bound the left hand side of \eqref{eq:typeII} by
\[ 
\begin{split}
&\ll D^2 \left( \frac{x/(Q^2 D^2)}{\left(q/(h(\log x)^{2C})\right)^{1/2}} + \left(\frac{x}{Q^2 D^2}\right)^{1/2} \cdot \left(\frac{x}{DMQ}\right)^{1/2} + \left(\frac{x}{Q^2 D^2}\right)^{1/2} q^{1/2} \right) (\log x)^{9/2} \\
&\ll \frac{x}{Q^2} \left( \frac{(\log x)^C}{(q/h)^{1/2}} + \frac{1}{(\log x)^{C/2}} + Q\left(\frac{q}{x}\right)^{1/2} (\log x)^C \right) (\log x)^{9/2}
\end{split}
\]
by our assumptions on $D$ and $M$.
\end{case}
\end{proof}

\subsection{Minor arc estimates for sums over primes}
\begin{proof}[Proof of Lemma~\ref{le:MinorArcChenprimes}]
By partial summation it is enough to consider the claim of Lemma~\ref{le:MinorArcChenprimes} with 
\[
\sum_{\substack{x \leq p < 2x \\ p \equiv c' \pmod{r} \\ p \equiv c\pmod{Q}}} e\left(\alpha p \right) \quad \text{replaced by} \quad \sum_{\substack{x \leq n < 2x \\ n \equiv c' \pmod{r} \\ n \equiv c\pmod{Q}}} \Lambda(n) e\left(\alpha n \right)
\]
Then, by a dyadic splitting on $r$, Vaughan's identity (see \cite[Proposition 13.4]{IwKo04}) with $y = z = x^{2/3}$, and further partial summation, it is then enough to show that, for any $M \leq x^{1/3}$, $R \leq x^{1/2-\varepsilon/2}$ and any $|a_m| \leq 1$, one has the type I estimate
\begin{equation}
\label{eq:primesTypeI}
\begin{split}
&\Biggl|\sum_{\substack{R \leq r < 2R \\ (r,c'Q) = 1}} \mu(r)^2\lambda_r \sum_{\substack{x \leq mn < 2x \\ mn \equiv c' \pmod{r} \\ mn \equiv c\pmod{Q} \\ M \leq m < 2M}} a_m e\left(\alpha mn \right)\Biggr| \\
&\leq \frac{x}{Q} \cdot \left(\frac{(\log x)^{C/2}}{(q/h)^{1/8}} + (\log x)^{C/2} Q^{1/2} \frac{q^{1/8}}{x^{1/8}} 
+ \frac{1}{(\log x)^{C/8}} \right) (\log x)^{12}.
\end{split}
\end{equation}
and that, for any $x^{1/3} \leq M \leq x^{2/3}$, $R \leq x^{1/2-\varepsilon/2}$ and any $|a_k|, |b_k| \leq \tau(k)$, one has the type II estimate
\begin{equation}
\label{eq:primesTypeII}
\begin{split}
&\Biggl|\sum_{\substack{R \leq r < 2R \\ (r,c'Q) = 1}} \mu(r)^2\lambda_r \sum_{\substack{x \leq mn < 2x \\ mn \equiv c' \pmod{r} \\ mn \equiv c\pmod{Q} \\ M \leq m < 2M}} a_m b_n e\left(\alpha mn \right)\Biggr| \\
&\leq \frac{x}{Q} \cdot \left(\frac{(\log x)^{C/2}}{(q/h)^{1/8}} + (\log x)^{C/2} Q^{1/2} \frac{q^{1/8}}{x^{1/8}} + \frac{1}{(\log x)^{C/8}} \right) (\log x)^{12}.
\end{split}
\end{equation}

The estimate~\eqref{eq:primesTypeI} follows directly from Lemma~\ref{le:TypeISum}. On the other hand, to estimate~\eqref{eq:primesTypeII}, by symmetry we may assume that $M \geq x^{1/2}$, and we take $D = \min\{R, x/(MQ (\log x)^C)\}$ and $R' = R/D$. Note that $D \geq \min\{R, x^{1/3-\varepsilon}\}$ and thus for either possibility of $\lambda_r$ from Hypothesis~\ref{hyp:ChenHyp}(1), by the well-factorability property we always get for the left hand side of~\eqref{eq:primesTypeII} the upper bound
\[
\begin{split}
&\sum_{\substack{d \leq D \\ (d,c'Q) = 1}} \sum_{\substack{r' \leq R' \\ (r', c'dQ) = 1}}  \Biggl| \sum_{\substack{x \leq mn < 2x \\ mn \equiv c' \pmod{dr'} \\ mn \equiv c\pmod{Q} \\ M \leq m < 2M}} a_m b_n e\left(\alpha mn \right)\Biggr| \\
&\leq \sum_{\substack{d \leq D \\ (d,Q) = 1}} \max_{(c_0, dQ) = 1} \sum_{\substack{r' \leq R' \\ (r', c'dQ) = 1}}  \Biggl| \sum_{\substack{x \leq mn < 2x \\ mn \equiv c' \pmod{r'} \\ mn \equiv c_0 \pmod{dQ} \\ M \leq m < 2M}} a_m b_n e\left(\alpha mn \right)\Biggr|,
\end{split}
\]
and the claim follows from Lemma~\ref{le:TypeIIminorarc} after dividing the variables $d$ and $r'$ dyadically.
\end{proof}

Let us note that, in the previous proof, in order to apply Lemma~\ref{le:TypeIIminorarc} when $M$ is close to $x^{2/3}$, we needed to take $D$ to be slightly smaller than $x^{1/3}$. This is in contrast to what was claimed in~\cite[Remark 10]{MatomakiB-V}, but the caused mistake in the proof of~\cite[Theorem 2]{MatomakiB-V} could be easily fixed by using a slight modification of Chen's weights used here in Appendix~\ref{app:Chen}.

\begin{proof}[Proof of Lemma~\ref{le:MinorArcMaynardprimes}]
The proof is analogous to the proof of Lemma~\ref{le:MinorArcChenprimes} but, since $r \leq x^{1/8}$, after a dyadic division to $R \leq r < 2R$, we can always take $D = R$ when we apply Lemma~\ref{le:TypeIIminorarc}.
\end{proof}

\bibliographystyle{plain}
\bibliography{../biblio}

\end{document}